\newcommand{\comment}[1]{}
\definecolor{teal}{RGB}{0,128,128}
\definecolor{darkpurple}{RGB}{128,0,128}
\newcommand{\pnote}[1]{{\color{teal}{\sf #1}}}
\newtheorem{theorem}{Theorem}[section]
\newtheorem{lemma}[theorem]{Lemma}
\newtheorem{cor}[theorem]{Corollary}
\theoremstyle{definition}
\newtheorem{defn}[theorem]{Definition}
\def \cC {{\cal C}}
\def \cH {{\cal H}}
\def \Z {\mathbb Z}
\title {On the Hamilton-Waterloo Problem with odd orders}
\author{A.\ C.\ Burgess \footnotemark[1] 
\and 
P.\ Danziger \footnotemark[3] 
\footnotemark[4]
\and
T.\ Traetta \footnotemark[3]
}
\begin{document}

\maketitle

\footnotetext[1]{Department of Mathematical Sciences, University of New Brunswick, 100 Tucker Park Rd., Saint John, NB  E2L 4L5, Canada}
\footnotetext[3]{Department of Mathematics, Ryerson University, 350 Victoria St., Toronto, ON  M5B 2K3, Canada}
\footnotetext[4]{Corresponding author, danziger@ryerson.ca.}

\begin{abstract}
Given non-negative integers $v, m, n, \alpha, \beta$, the Hamilton-Waterloo problem asks for a factorization of the complete graph $K_v$ into $\alpha$  $C_m$-factors and $\beta$ $C_n$-factors.
Clearly, $v$ odd, $n,m\geq 3$, $m\mid v$, $n\mid v$ and $\alpha+\beta = (v-1)/2$ are necessary conditions. 
To date results have only been found for specific values of $m$ and $n$.
In this paper we show that for any $m$ and $n$ the necessary conditions are sufficient when $v$ is a multiple of $mn$ and $v>mn$, except possibly when $\beta=1$ or 3, with five additional possible exceptions in $(m,n,\beta)$.
For the case where $v=mn$ we show sufficiency when $\beta > (n+5)/2$ except possibly when $(m,\alpha) = (3,2)$, $(3,4)$, with seven further possible exceptions in $(m,n,\alpha,\beta)$.
We also show that when $n\geq m\geq 3$ are odd integers, the lexicographic product of $C_m$ with the empty graph of order $n$ has a factorization into $\alpha$ $C_m$-factors and $\beta$ $C_n$-factors for every $0\leq \alpha \leq n$, $\beta = n-\alpha$, except possibly when $\alpha= 2,4$, $\beta = 1, 3$, with three additional possible exceptions in $(m,n,\alpha)$.
\end{abstract}

Keywords: 2-Factorizations, Resolvable Cycle Decompositions, Cycle Systems, Generalized Oberwolfach Problem, Hamilton-Waterloo Problem.

\section{Introduction}

We assume that the reader is familiar with the general concepts of graph theory and design theory, and refer them to \cite{Handbook, West}. In particular, a factor of a graph $G$ is a spanning subgraph of $G$; a 1-factor is a factor which is 1-regular and a 2-factor is a factor which is 2-regular and hence consists of a collection of cycles. We denote cycles of length $n$ by $C_n$, with consecutive points $(x_0,x_1\ldots, x_{n-1})$,
and a collection of cycles by $[n_1^{a_1}, \ldots, n_s^{a_s}]$, where there are $a_i$ cycles of length $n_i$.
We denote the complete graph on $n$ vertices by $K_n$. By $K_n^*$ we mean the graph $K_n$ when $n$ is odd and $K_n-I$, where $I$ is a single 1-factor, when $n$ is even.

A 2-factorization of a graph, $G$, is a partition of the edges of $G$ into 2-factors. It is well known that a regular graph has a 2-factorization if and only if every vertex has even degree; see~\cite{Rosa}.
However, if we specify a particular 2-factor, $F$ say, and ask for all the factors to be isomorphic to $F$ the problem becomes much harder. Indeed, if $G\cong K_n^*$, we have the Oberwolfach problem, which is well known to be hard. For a fixed $2$-factor $F$ of $K_n^*$, we denote the Oberwolfach problem by OP$(F)$. 
More generally, given a graph $G$ and a collection of graphs $\cH$, an {\em $\cH$-factor of $G$} is a set of edge-disjoint subgraphs of $G$, each isomorphic to a member of $\cH$, which between them cover every point in $G$. An {\em $\cH$-factorization of $G$} is a set of edge-disjoint $\cH$-factors of $G$. If $\cH$ consists of a single graph, $H$, we speak of $H$-factors and $H$-factorizations of $G$ respectively. 

A 2-factor is a Hamiltonian cycle if it contains only one cycle. We will use the following classical result on Hamiltonian factorizations, originally due to Walecki \cite{Lucas}; see also \cite[Section VI.12]{Handbook}.
\begin{theorem}
\label{Hamiltonian}
For any $v \geq 3$ there exists a factorization of $K_v^*$ into Hamiltonian cycles.
\end{theorem}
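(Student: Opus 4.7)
The plan is to use Walecki's classical construction, in which a Hamilton factorization of $K_v^*$ is obtained as the orbit of a single carefully chosen base Hamilton cycle under a cyclic group acting on the vertices. I would handle the odd and even cases separately.

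For $v$ odd, write $v = 2n+1$ and label the vertices $\{\infty\} \cup \Z_{2n}$. The base cycle is the standard zigzag
\[
H_0 = (\infty,\, 0,\, 1,\, 2n-1,\, 2,\, 2n-2,\, 3,\, 2n-3,\, \ldots,\, n-1,\, n+1,\, n),
\]
closed back to $\infty$. A direct check on the consecutive vertex pairs shows that the $2n-1$ non-$\infty$ edges of $H_0$ realize each of the $2n-1$ nonzero edge-differences of $\Z_{2n}$ exactly once, and that $H_0$ is invariant (as an unoriented cycle) under the half-turn translation $x \mapsto x+n$. Letting $\sigma$ fix $\infty$ and act as $x\mapsto x+1$ on $\Z_{2n}$, the cycles $H_i = \sigma^i H_0$ for $i = 0, \ldots, n-1$ are then pairwise edge-disjoint and partition $E(K_v)$, yielding the required $(v-1)/2$ factors.

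For $v$ even, write $v = 2n$ and use vertex set $\Z_{2n}$. The removed $1$-factor $I$ is taken to be the antipodal matching $\{\{i,\, i+n\} : 0 \le i < n\}$, i.e.\ all edges of difference $n$. I would describe an analogous base cycle $H_0$ whose multiset of edge-differences covers every nonzero difference of $\Z_{2n}$ other than $n$ with exactly the right multiplicity, and show that its orbit under the translation $x\mapsto x+1$ produces exactly $(v-2)/2$ pairwise edge-disjoint Hamilton cycles, which together with $I$ partition $E(K_v)$.

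The main technical point in each case is the bookkeeping: one must verify that the differences used by the base cycle form the correct multiset, and (especially in the even case) perform the stabilizer analysis ensuring the right number of translates are distinct and edge-disjoint. Once these difference counts are checked, the rotation argument becomes automatic. The even case is more delicate because differences must be used multiple times in a coordinated way, whereas in the odd case one edge of each nonzero difference suffices.
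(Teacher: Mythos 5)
The paper does not actually prove this statement: it is quoted as the classical theorem of Walecki, with a citation to Lucas and to Section VI.12 of the Handbook, so the only meaningful comparison is with the standard Walecki construction, which is indeed what you are reconstructing. Your odd case is correct and essentially complete in outline: the zigzag starter path from $0$ to $n$ realizes each nonzero residue of $\Z_{2n}$ exactly once as a directed difference, and its invariance under $x\mapsto x+n$ is precisely what makes the $n$ translates $H_0,\ldots,H_{n-1}$ pairwise edge-disjoint rather than a $2$-fold cover; this is the textbook argument.

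The even case, however, contains a genuine gap, and it is not just the ``bookkeeping'' you defer. You propose to obtain the $(v-2)/2=n-1$ Hamilton cycles of $K_{2n}-I$ as the orbit of a single base cycle under the translation $x\mapsto x+1$ on the vertex set $\Z_{2n}$. The orbit of any subgraph under the cyclic group $\langle x\mapsto x+1\rangle$ has length equal to $2n$ divided by the order of the stabilizer, hence a divisor of $2n$; and $n-1$ divides $2n$ only when $n-1$ divides $2$, i.e.\ only for $v\in\{4,6\}$. For every even $v\geq 8$ no base cycle with the property you require can exist, so the ``analogous base cycle'' whose existence you assume cannot be produced, and taking a proper subset of a larger orbit destroys the automatic edge-disjointness that the rotation argument is supposed to deliver. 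The classical resolution is structurally different: one either works with a rotation of order $n-1$ on a vertex set such as $\Z_{2n-2}\cup\{\infty_1,\infty_2\}$ with two fixed points, or derives the even case from the decomposition of $K_{2n}$ into $n$ Hamilton paths by a non-rotational modification. As written, your even case would fail at exactly the stabilizer analysis you yourself identify as the main technical point.
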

We call a factor in which every component of the factor is isomorphic a {\em uniform} factor. 
The case where $F$ is uniform has also been solved \cite{Alspach Haggvist 85, ASSW, Hoffman Schellenberg 91}.
\begin{theorem}
\label{OP uniform}
Let $v$ and $n$ be integers.
There is a $C_n$-factorization of $K_v^*$ if and only if $n \mid v$, except that there is no $C_3$-factorization of $K_{6}^*$ or $K_{12}^*$.
\end{theorem}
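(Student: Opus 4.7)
\medskip
\noindent\textbf{Proof plan.} Necessity is a short parity/counting observation: a $C_n$-factor of $K_v^*$ is a $2$-regular spanning subgraph all of whose components are $n$-cycles, so $n\mid v$ is forced immediately. The two non-existence claims at $(n,v)=(3,6)$ and $(3,12)$ I would dispose of directly; each amounts to the well-known non-existence of a (nearly) Kirkman triple system of that order, ruled out by a short counting argument on pairs.

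For sufficiency I would split into three regimes. When $n=v$ the assertion is Walecki's theorem (Theorem~\ref{Hamiltonian}). When $n=3$ it is the classical existence of Kirkman triple systems (for $v$ odd, $v\equiv 3 \pmod 6$) and nearly-Kirkman triple systems (for $v$ even, $v\equiv 0 \pmod 6$), which I would simply quote.

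For the remaining range $4\le n<v$, writing $v=ns$ with $s\ge 2$, the core step is a product-style decomposition. Partition $V(K_v^*)$ into $s$ classes of size $n$ and split the edge set as (i) $s$ intra-class copies of $K_n$ (or $K_n^*$, if $n$ is even, after distributing the leave $I$ between the two parts in a balanced way) and (ii) the cross edges, which form the complete equipartite graph $K_{n\times s}$. Combining a Hamilton decomposition of the intra-class copies into $(n-1)/2$ global $C_n$-factors of $K_v^*$ with a $C_n$-factorization of $K_{n\times s}$ then yields the desired factorization.

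The real work, and the part I expect to be the main obstacle, is producing a $C_n$-factorization of $K_{n\times s}$. Here I would follow the strategy of the cited papers: for $n$ even, use the Alspach--H\"aggkvist constructions from \cite{Alspach Haggvist 85}; for $n$ odd, use starter/adder and difference-set methods in $\Z_{ns}$ as in \cite{ASSW}, augmented by the explicit small base cases treated in \cite{Hoffman Schellenberg 91}. The most delicate regime is $n$ odd with $s$ small, where the cyclic constructions can break down and ad hoc base cases must be supplied; this is precisely the range in which the two genuine exceptions $K_6^*$ and $K_{12}^*$ appear.
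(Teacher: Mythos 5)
The paper does not prove this statement at all: Theorem~\ref{OP uniform} is quoted as a known classical result, with the proof outsourced entirely to \cite{Alspach Haggvist 85, ASSW, Hoffman Schellenberg 91}. So there is no ``paper's proof'' to compare against; the relevant question is whether your sketch would stand on its own, and as written it would not. Your reduction --- intra-class Hamilton decompositions plus a $C_n$-factorization of the complete equipartite graph $K_{n\times s}$ --- is indeed the standard skeleton, but the $C_n$-factorization of $K_{n\times s}$ (and of $K_{n\times s}-I$) \emph{is} the substantive content of the theorem, and you defer it wholesale back to the same three papers. That is no more of a proof than the citation the authors give, just a longer one.

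Two concrete points where the sketch as stated would also need repair. First, the placement of the $1$-factor $I$ is handled only for $n$ even: when $n$ is odd and $v=ns$ is even (so $s$ is even), $I$ cannot live inside the odd-sized classes, each vertex has odd cross-degree $n(s-1)$, and what you actually need is a $C_n$-factorization of $K_{n\times s}-I$, not of $K_{n\times s}$; this is exactly the setting of \cite{Hoffman Schellenberg 91} and is where the exceptions $K_6^*$ and $K_{12}^*$ arise. Second, the non-existence of a nearly Kirkman triple system of order $12$ is not ``a short counting argument on pairs''; the necessary counting conditions are all satisfied there, and ruling it out requires a genuine case analysis (classically due to Kotzig and Rosa). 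Neither point is fatal to the plan, but both show that the outline is a roadmap to the literature rather than a proof.
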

It is known that the Oberwolfach problem has no solution when $F \in \{[3^2 ], [3^4]$, $[4, 5], [3^2 , 5]\}$. Otherwise, a solution is known for every case where $n\leq 40$ \cite{DFWMR 10}.
In the case when $F$ is bipartite, and so contains only even cycles, OP$(F)$ is solved \cite{Alspach Haggvist 85, Bryant Danziger 11, Haggvist 85}. OP$(F)$ is also solved in the case where $F$ has exactly two cycles 
\cite{BBR97, Buratti Traetta 12, Traetta 13}. In addition, cyclic solutions have also been studied \cite{Buratti Rinaldi 05, Buratti Rinaldi 08, Buratti Traetta 12} and many other families are known \cite{Bryant Schar 09, Ollis Sterr 09}, but no general solution is known. See \cite[Section VI.12]{Handbook} for a survey. 

We use the following notation from \cite{OCD, Liu 00} (which is adapted from \cite[p.\ 393]{West}).
\begin{defn}
Given a graph $G$, $G[n]$ is the {\em lexicographic product} of $G$ with the empty graph on $n$ points. Specifically, the point set is $V(G)\times \Z_n$ and $(x,i)(y,j)\in E(G[n])$ if and only if $xy \in E(G)$, $i,j\in \Z_n$.
\end{defn} 
We note that $G[m][n] \cong G[mn]$.

The existence of 2-factorizations of other graphs has also been considered. In particular,
Liu \cite{Liu 03, Liu 00} considered $C_m$-factorizations of the complete multipartite graph and showed the following result. 
\begin{theorem}[\cite{Liu 03}]
\label{Liu}
There exists a $C_m$-factorization of $K_t[n]$ if and only if $m\mid tn$, $(t-1)n$ is even, further $m$ is even when $t = 2$, and
$(m, t, n) \not\in \{(3, 3, 2), (3, 6, 2), (3, 3, 6), (6, 2, 6)\}$.
\end{theorem}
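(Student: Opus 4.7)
The plan is to verify necessity, then prove sufficiency by combining small-case constructions, structural identities, and cyclic starters.

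\emph{Necessity.} Each vertex of $K_t[n]$ has degree $(t-1)n$, which must be even for any 2-factorization to exist. A $C_m$-factor partitions the $tn$ vertices into cycles of length $m$, so $m \mid tn$. When $t = 2$, $K_2[n] \cong K_{n,n}$ is bipartite, so it contains no odd cycle and $m$ must be even. For each of the four exceptional triples the graph is small enough that direct analysis (parity of differences, coloring, or exhaustive case check) rules out a $C_m$-factorization; for example, $(m,t,n)=(3,3,2)$ corresponds to $K_{3,3,2}$, where the small number of triangles can be enumerated against the factor requirement.

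\emph{Sufficiency.} I would split on the parameters. The case $n = 1$ is Theorem~\ref{OP uniform} applied to $K_t = K_t^*$ (noting $t$ must be odd). The bipartite case $t = 2$ requires a $C_m$-factorization of $K_{n,n}$ with $m$ even and $m \mid 2n$, which can be assembled from bipartite Sotteau- or H\"aggkvist-type constructions on cyclic groups. For $t \geq 3$ I would exploit two structural identities. First, $K_{tn}$ decomposes as the edge-disjoint union $K_t[n] \oplus t K_n$: combining a $C_m$-factorization of $K_{tn}^*$ (Theorem~\ref{OP uniform}) with $t$ copies of a $C_m$-factorization of $K_n^*$ allows edge-subtraction to yield a $C_m$-factorization of $K_t[n]$, provided compatibility holds (essentially $m \mid n$). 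Second, the lexicographic identity $K_t[n_1 n_2] \cong K_t[n_1][n_2]$ supports induction on the factors of $n$: a $C_{m_1}$-factorization of $K_t[n_1]$ can be blown up using an auxiliary bipartite or Hamilton factorization to produce a $C_m$-factorization of $K_t[n_1 n_2]$.

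When neither reduction applies, in particular when $\gcd(m, n)$ is small, I would turn to a cyclic starter on $\Z_{tn}$. Identify the $t$ parts of $K_t[n]$ with the cosets of the order-$n$ subgroup $H \leq \Z_{tn}$, and exhibit a single $C_m$-factor whose edge-differences cover each nonzero element of $\Z_{tn} \setminus H$ exactly once (or with a controlled multiplicity); developing this starter by the full cyclic group then yields all remaining factors. The hardest part will be producing such starters uniformly: for small $m$ (notably $m = 3$ and $m = 4$) and for $t$ even, the parity and divisibility constraints on the admissible differences are very tight, and this is precisely the regime that produces the four exceptional triples. I would expect the bulk of the proof to be this algebraic construction together with direct verification of a finite list of boundary parameters left uncovered by the recursions.
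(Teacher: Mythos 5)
First, note that the paper offers no proof of Theorem~\ref{Liu}: it is imported verbatim from Liu's work \cite{Liu 03} (building on \cite{Liu 00}), so there is no in-paper argument to compare yours against. What you are really attempting is a reconstruction of Liu's proof, which occupies the better part of two papers, and your proposal should be judged as such.

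Your necessity argument is fine. On sufficiency there is a concrete gap: the ``edge-subtraction'' step for $t \ge 3$ does not work. The identity $K_{tn} = K_t[n] \oplus tK_n$ supports only the \emph{filling} direction --- a $C_m$-factorization of $K_t[n]$ together with one of $K_n^*$ yields one of $K_{tn}^*$ by uniting corresponding factors inside the $t$ parts --- not the reverse. A $C_m$-factorization of $K_{tn}^*$ obtained from Theorem~\ref{OP uniform} has no reason to restrict nicely to the edges inside the parts: its cycles mix intra-part and inter-part edges arbitrarily, so you cannot ``subtract'' $t$ copies of a factorization of $K_n^*$ and be left with a factorization of $K_t[n]$. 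Since this reduction was meant to carry the regime $m \mid n$, $t \ge 3$, a central chunk of the parameter range is uncovered. The remaining ingredients --- the bipartite case $t=2$, the blow-up through $K_t[n_1][n_2]$, and cyclic starters on $\Z_{tn}$ whose differences avoid the order-$n$ subgroup --- are all reasonable and do resemble techniques used in the actual literature, but as written they constitute a plan rather than a proof: no starter is exhibited, no boundary list is enumerated, and it is precisely in producing those starters uniformly (especially for $m=3$ and for even $t$) that the four exceptional triples and the genuine difficulty of Liu's theorem reside.
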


A well-known variant of the Oberwolfach problem is the Hamilton-Waterloo problem. In this case we are given two specified 2-factors, $F$ and $F'$, and two integers, $\alpha$ and $\beta$, and asked to factor a graph $G$ into $\alpha$ factors isomorphic to $F$ and $\beta$ factors isomorphic to $F'$. We denote a solution to this problem by HW$(G;F,F';\alpha,\beta)$. We denote by HWP$(G; F, F')$ the set of $(\alpha, \beta)$ for which a solution HW$(G;F,F';\alpha,\beta)$ exists.
In the case where $F$ and $F'$ are uniform, with cycle lengths $m$ and $n$ say, we refer to HW$(G;m,n;\alpha,\beta)$ and  HWP$(G; m, n)$ as appropriate. Further, if $G=K_v$, we refer to HW$(v;m,n;\alpha,\beta)$ and HWP$(v; m, n)$ respectively. 
We note the following necessary condition for the case of uniform factors.

\begin{theorem}
\label{nec}
Given odd integers $m$ and $n$, in order for HW$(mnt;m,n;\alpha,\beta)$ to exist, $t$ must be odd and $\alpha+\beta = (mnt-1)/2$.
\end{theorem}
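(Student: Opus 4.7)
The plan is a direct degree-counting argument applied to any single vertex of $K_{mnt}$. I would begin by noting that in $K_v$ with $v=mnt$, every vertex has degree $v-1=mnt-1$, and any 2-factor — regardless of whether its components are $m$-cycles or $n$-cycles — meets every vertex in exactly two edges. Hence, the $\alpha+\beta$ factors of a hypothetical HW$(mnt;m,n;\alpha,\beta)$ together contribute $2(\alpha+\beta)$ to the degree at any fixed vertex, and summing these contributions must recover the full degree $mnt-1$. This immediately yields
\[
2(\alpha+\beta) = mnt - 1,
\]
i.e., $\alpha+\beta = (mnt-1)/2$.

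The parity claim on $t$ then follows as an integrality observation: since $\alpha$ and $\beta$ are non-negative integers, $\alpha+\beta$ is an integer, and so $mnt-1$ must be even, forcing $mnt$ to be odd. Because $m$ and $n$ are odd by hypothesis, the parity of $mnt$ is controlled entirely by $t$, and we conclude that $t$ must be odd.

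Since the result is a necessary condition obtained by a one-vertex edge count, there is no substantive obstacle to overcome; the only point requiring any care is simply making explicit that every 2-factor, irrespective of its cycle structure, is $2$-regular, so that the count $2(\alpha+\beta)$ is valid uniformly over both $C_m$- and $C_n$-factors. It is worth remarking in passing that the divisibility conditions $m\mid mnt$ and $n\mid mnt$ needed for $C_m$- and $C_n$-factors to exist on the vertex set of $K_{mnt}$ are automatic, so the statement concerns a well-posed instance of the Hamilton–Waterloo problem.
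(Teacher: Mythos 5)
Your degree-counting argument is correct and complete: each of the $\alpha+\beta$ two-factors contributes exactly $2$ to the degree of any fixed vertex of $K_{mnt}$, forcing $2(\alpha+\beta)=mnt-1$, and the integrality of $\alpha+\beta$ together with $m,n$ odd forces $t$ odd. The paper states this necessary condition without proof, and your argument is precisely the standard one it implicitly relies on.
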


We note that when $\gcd(m,n)\neq1$, it is possible to have solutions where the number of points, $v$, is not a multiple of $mn$, though we must have $v$ odd, $m\mid v$ and $n\mid v$.  

The first serious analysis of the Hamilton-Waterloo problem in the literature appeared in 2003 \cite{AdaBilBryElz}, but the problem was known well before then. It is known that the following instances of the Hamilton-Waterloo problem do not exist.\\[1ex]
\begin{tabular}{l}
$\mathrm{HW}(7;[3,4],[7];2,1)$, $\mathrm{HW}(9;[3^3],[4,5];2,2)$ \\
HW$(9; [3^3],F;3,1)$ for $F\in\{[4,5],[3,6],[9]\}$ and  \\
HW$(15; [3^5],F;6,1)$ for $F\in\{[3^2,4,5],[3,5,7],[5^3],[4^2,7],[7,8]\}$.
\end{tabular} \\[1ex]
Every other instance of the Hamilton-Waterloo Problem for which the obvious necessary conditions has a solution when the number of points $v$ is odd and $v\leq 17$ \cite{AdaBry,FraHolRos,FraRos}. In the case where $v$ is even, there is no solution to $\mathrm{HW}(K_6^*;[3^2],F;2,0)$. It is also known that there is no solution to $\mathrm{HW}(K_8^*;[3,5],[4^2];2,1)$ or $\mathrm{HW}(K_8^*;[3,5],[4^2];1,2)$, but that there is a solution in every other instance where the obvious necessary conditions are satisfied and $v\leq 10$, see~\cite[Section VI.12.4]{AdaBry,Handbook}.

In the bipartite case, where all the cycles in $F$ and $F'$ are of even length, $(r,r')\in {\rm HWP}(K_v^*;F,F')$ for all $r,r'>1$ \cite{Bryant Danziger 11}.  When $G$ is a regular complete multipartite graph of even degree and $F$ and $F'$ are bipartite factors of $G$, HWP$(G;F,F')$ has been almost completely  solved \cite{BDP}.  For the case of $C_3$- and $C_4$-factors, $\mathrm{HWP}(K_v^*,3,4)$ is almost completely solved \cite{DQS}. 
Much recent work has focused on the case in which one of the factors is a Hamiltonian cycle~\cite{SM}.
For the even case, solutions can be found with 4-cycles and a single factor of $v$-cycles in \cite{KO, Lei Fu Shen, OO}.
The case of Hamilton cycles and triangle-factors, HWP$(K_v^*;3,v)$, has been considered in \cite{DL,DL2,HNR} for odd $v$ and \cite{Lei Shen} for even $v$, but still remains open.

It is clear that finding solutions of the Hamilton-Waterloo Problem for uniform odd-cycle factors is challenging. Cyclic solutions for sparse families can be found in \cite{Buratti Rinaldi 05,Buratti Danziger 15,Traetta Merola}.
A complete solution for the existence of $\mathrm{HW}(v;m,n;\alpha,\beta)$ in the cases $(m,n)=(3,5), (3,15)$ and $(5,15)$ is given in \cite{AdaBilBryElz}, except that $(6,1) \not\in {\rm HWP}(15; 3, 5)$ and the existence of HW$(v; 3,5;r,1)$ remains open for $v > 15$.
A complete existence result for $(m,n) = (3,7)$
is given in \cite{Lei Fu}. 
The case where $(m,n) = (3,9)$ was solved in
\cite{Kamin}, except possibly when $\beta=1$, this is extended to $(m,n)=(3,3x)$ in \cite{AKKPO}. 
To date these are the only cases known. 

In this paper we consider the case of uniform odd factors in the Hamilton-Waterloo problem. We prove the following theorem.
\begin{theorem}
\label{main}
If $m$ and $n$ are odd integers with $n\geq m \geq 3$, then $(\alpha, \beta)\in {\rm HWP}(mnt; m, n)$ if and only if $t$ is odd, $\alpha, \beta \geq 0$ and $\alpha+\beta = (mnt-1)/2$, except possibly when:
\begin{itemize}
\item
$t>1$ and $\beta = 1$ or $3$, or $(m,n,\beta)= (5,9,5)$, $(5,9,7)$, $(7,9,5)$, $(7,9,7)$, $(3,13,5)$ or;
\item
$t=1$ and $\beta \in [1, \ldots, \frac{n-3}{2}] \cup \left\{\frac{n+1}{2}, \frac{n+5}{2}\right\}$, $(m,\alpha) = (3,2)$, $(3,4)$, or $(m,n,\alpha,\beta)=  (3,11,6,10)$, $(3,13,8,10)$, $(5,7,9,8)$, $(5,9,13,9)$, or $(7,9,22,9)$.
\end{itemize}
\end{theorem}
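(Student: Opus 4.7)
The plan is to attack the theorem in three layers: a Hamilton--Waterloo result for the lexicographic product $C_m[n]$ (the third bullet of the abstract), the base case $t=1$, and the extension to $t>1$, each built on the previous via a successive decomposition of $K_{mnt}$.

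\textbf{Layer 1: HWP on $C_m[n]$.} The graph $C_m[n]$ is the edge-disjoint union of $m$ copies of $K_{n,n}$, one for each edge of the underlying $C_m$. The easy extreme $\alpha=n$ is realised by freezing the second coordinate, giving $n$ parallel $C_m$-factors. The other extreme $\alpha=0$ asks for a $C_n$-factorization of $C_m[n]$, which I would build from base cycles unwinding around the underlying $C_m$. For the mixed range $0<\alpha<n$ I would construct base 2-factors under a rotational $\Z_{mn}$-action and develop them into $\alpha$ $C_m$-factors and $\beta=n-\alpha$ $C_n$-factors; the failure modes of these gadgets should recover the exceptional cases $\alpha\in\{2,4\}$, $\beta\in\{1,3\}$, and the three sporadic triples recorded in the abstract.

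\textbf{Layer 2: The case $t=1$.} Partition the vertices of $K_{mn}$ into $m$ blocks of size $n$ to obtain
\[
K_{mn} \;=\; m\cdot K_n \;\cup\; K_m[n].
\]
Apply Walecki (Theorem~\ref{Hamiltonian}) to $K_m$ to express $K_m[n]$ as an edge-disjoint union of $(m-1)/2$ copies of $C_m[n]$, and factor each by Layer~1 to obtain $\alpha_i$ $C_m$-factors and $\beta_i=n-\alpha_i$ $C_n$-factors. The $m$ copies of $K_n$ contribute, by combining their Walecki decompositions across copies, a further $(n-1)/2$ $C_n$-factors of $K_{mn}$. This realises every $(\alpha,\beta)$ with $\beta\geq(n-1)/2$ modulo the exceptional Layer~1 values of $\beta_i$; the transposed decomposition $K_{mn}=nK_m\cup K_n[m]$ handles small $\beta$, and $\beta=0$ is covered by Theorem~\ref{OP uniform}. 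This explains the listed $t=1$ exceptions, which sit in the narrow band around $\beta\approx(n-1)/2$ where neither decomposition has room to absorb $\beta_i\in\{1,3\}$.

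\textbf{Layer 3: The case $t>1$, and the main obstacle.} Partition the $mnt$ vertices into $t$ blocks of size $mn$:
\[
K_{mnt} \;=\; t\cdot K_{mn} \;\cup\; K_t[mn].
\]
The $t$ inner copies of $K_{mn}$ are handled by Layer~2, with (possibly different) splits chosen to sum correctly. For $K_t[mn]$ I would apply Walecki to $K_t$ to write it as a union of $(t-1)/2$ copies of $C_t[mn]$, and then factor each $C_t[mn]$ into a mix of $C_m$- and $C_n$-factors, using Liu (Theorem~\ref{Liu}) for the pure extremes and Layer~1-style constructions for the interior. Because $t>1$ brings a large amount of extra edge-budget, there is enough flexibility to eliminate most $t=1$ exceptions, leaving only the residual $\beta\in\{1,3\}$ case---where there are simply too few $C_n$-factors to distribute---and a short list of small sporadics. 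The principal difficulty is therefore Layer~1: constructing, for every admissible $(\alpha_i,\beta_i)$, an explicit rotationally compatible factorization of $C_m[n]$ and verifying that these combine additively in Layers~2 and~3 to hit every target $\beta$ without gaps. The listed exceptions in Theorem~\ref{main} are precisely the points where this compatibility fails and no alternative construction is known.
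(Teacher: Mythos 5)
Your high-level architecture for Layers 1 and 2 does match the paper: Theorem~\ref{C_m[n]} is proved first (this is where almost all of the paper's work lives, via Skolem/Langford triples, the matrix construction of Theorem~\ref{constructionD} and the projections of Lemma~\ref{ASSW lemma}; your one-sentence gesture at ``rotational gadgets'' does not substitute for it), and the $t=1$ case is obtained exactly as you describe, by applying Theorem~\ref{Hamiltonian} to $K_m$, blowing up each point by $n$ to get $(m-1)/2$ $C_m[n]$-factors of $K_m[n]$, filling the $m$ holes with Hamiltonian factorizations of $K_n$, and distributing splits $(\alpha_i,\beta_i)$ over the $C_m[n]$-factors. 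But your Layer 2 claim that the ``transposed decomposition'' $K_{mn}=nK_m\cup K_n[m]$ handles small $\beta$, so that the $t=1$ exceptions form a narrow band around $\beta\approx(n-1)/2$, is wrong on both counts: the theorem's exception list contains \emph{all} of $\beta\in[1,(n-3)/2]$ when $t=1$, precisely because small $\beta$ is not handled, and the transposed route would need a factorization of $C_n[m]$ (parts of size $m<n$) into $C_m$- and $C_n$-factors, which Theorem~\ref{C_m[n]} does not supply and the paper nowhere proves.

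The decisive gap is Layer 3. You propose to write $K_t[mn]$ as $(t-1)/2$ copies of $C_t[mn]$ via Walecki on $K_t$ and then factor each $C_t[mn]$ into $C_m$- and $C_n$-factors. For $t>n\geq m$ this is impossible: any cycle of $C_t[mn]$ projects to a closed walk on the underlying $C_t$ whose displacement is a sum of $\pm 1$'s, one per edge, hence odd for a cycle of odd length $m$ or $n$; an odd displacement cannot be $\equiv 0\pmod{t}$ unless its absolute value is at least $t$, so $C_t[mn]$ contains no $m$-cycles and no $n$-cycles whatsoever when $t>n\geq m$. Even for small $t$ this is a different problem from Theorem~\ref{C_m[n]}, and Theorem~\ref{Liu} (which concerns $K_t[n]$, not $C_t[mn]$) gives you neither extreme. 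The paper avoids the issue entirely: it starts from a $C_m$-factorization of $K_t[m]$ (Theorem~\ref{Liu}) with $m(t-1)/2$ classes and blows each point up by $n$, so that every resolution class becomes a $C_m[n]$-factor of $K_t[mn]$ --- the only blown-up cycle ever factored is $C_m[n]$ itself. A further, fixable, slip: the $t$ holes $K_{mn}$ cannot be given ``possibly different splits,'' because the $i$-th parallel classes of the hole-fillings must be unioned across all $t$ holes to form a single $2$-factor of $K_{mnt}$, and that union is uniform only if every hole uses the same cycle type in that class; the paper fills all $t$ holes with one and the same factorization of $K_{mn}$.
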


On the way, we also prove the following useful result on factorizations of $C_m[n]$.
\begin{theorem}
\label{C_m[n]}
If $n$, $m$ and $\alpha$ are odd integers with $n\geq m\geq 3$, $0\leq \alpha \leq n$, then $(\alpha, \beta)\in {\rm HWP}(C_m[n]; m, n)$, if and only if $\beta=n-\alpha$, except possibly when $\alpha= 2,4$, $\beta = 1, 3$, or $(m,n,\alpha) = (3,11,6)$, $(3,13,8)$, $(3,15,8)$.
\end{theorem}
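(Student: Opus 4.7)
The plan is to exploit the natural structure of $C_m[n]$ as a cycle of bipartite graphs. Label the vertex set as $\Z_m\times\Z_n$, so that $E(C_m[n])$ decomposes into $m$ copies of $K_{n,n}$, namely $B_i=\{(i,j)(i+1,j'):j,j'\in\Z_n\}$ for $i\in\Z_m$. Each $B_i$ has the canonical 1-factorization $\{M_i^k:k\in\Z_n\}$ with $M_i^k=\{(i,j)(i+1,j+k):j\in\Z_n\}$. For a tuple $(k_0,\ldots,k_{m-1})\in\Z_n^m$, the union $\bigcup_i M_i^{k_i}$ is a ``shift'' 2-factor whose cycle structure depends only on $s=k_0+\cdots+k_{m-1}\pmod n$: every cycle has length $m\cdot n/\gcd(s,n)$, with $\gcd(s,n)$ cycles in total. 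Hence this shift 2-factor is a $C_m$-factor precisely when $s\equiv0\pmod n$, and (only when $m\mid n$) a $C_n$-factor precisely when $\gcd(s,n)=m$.

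I would first dispose of the extreme cases. For $\alpha=n$ (a pure $C_m$-factorization) one arranges $n$ tuples $(k_0^{(j)},\ldots,k_{m-1}^{(j)})$ with each column-sum zero modulo $n$ and with each coordinate $i$ yielding a permutation of $\Z_n$ as $j$ varies, a Latin-square-style configuration which is easy to produce explicitly. The case $\alpha=0$ asks for a $C_n$-factorization of $C_m[n]$; this is immediate when $m\mid n$ via shift tuples with $\gcd(s,n)=m$, but when $\gcd(m,n)=1$ no shift 2-factor can ever be a $C_n$-factor, and one must build genuinely bent 2-factors in which some $B_i$ contributes two edges at a single vertex. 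Theorem~\ref{Liu} applied to $K_m[n]$ (and refined so that the output factors only use edges of $C_m[n]$) together with an explicit cyclic construction should suffice. For intermediate $\alpha$, the plan is to reserve $\alpha$ of the $n$ ``slots'' for pure shift $C_m$-factors and then build a $C_n$-factorization of the complementary edge set; after choosing the $\alpha$ tuples carefully, the residual graph is itself of lexicographic type, so the remaining problem reduces either to a smaller instance of the same theorem or to a known $C_n$-factorization result, handled by induction on $\alpha$ or by a frame-type construction based on the $\Z_n$-action.

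The main obstacle is the boundary regime in which $\beta\in\{1,3\}$ or $\alpha\in\{2,4\}$: when only one or three factors of the minority type must be squeezed out, the residual graph left after removing the majority factors is too rigid for the generic construction, and one would have to resort to ad-hoc, often computer-assisted arguments. The three small exceptional triples $(3,11,6),(3,13,8),(3,15,8)$ lie precisely on the edge of what the base-case constructions can handle and are recorded as possible exceptions; outside these, a combination of the layered shift construction, induction on $\alpha$, and explicit verification of small base cases should complete the proof.
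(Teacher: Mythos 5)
Your opening analysis of the shift $2$-factors $\bigcup_i M_i^{k_i}$ is correct and is essentially the mechanism behind the paper's Theorem~\ref{constructionD}: a matrix with zero row sums whose columns permute the difference set is exactly a careful choice of shift tuples, and it yields the $C_m$-factors. However, there are two genuine gaps. First, and most seriously, you have no construction for the $C_n$-factors. As you observe, when $m\nmid n$ no shift $2$-factor is a $C_n$-factor, so bent factors are required; but ``Theorem~\ref{Liu} applied to $K_m[n]$, refined so that the output factors only use edges of $C_m[n]$'' is not a construction --- Liu's factors of the complete multipartite graph $K_m[n]$ have no reason to live inside the much sparser subgraph $C_m[n]$, and no such refinement is available. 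The paper's actual engine here is the $i$-projection of \cite{ASSW} (Lemma~\ref{ASSW lemma}): a directed Hamiltonian cycle of $K_n$ is lifted to a $C_n$-factor of $C_m[n]$ by running forward through the $m$ parts once and then zigzagging between two consecutive parts. Combined with the Bermond--Favaron--Mah\'eo theorem on Hamiltonian decompositions of connected $4$-regular Cayley graphs (Theorem~\ref{berm}), this converts suitable pairs of mixed differences $\{d_1,d_2\}$ into four $C_n$-factors of $C_m[\pm\{d_1,d_2\}]$, and this is how all $\beta$ of the $C_n$-factors are produced. Without this (or an equivalent) idea the argument cannot get off the ground even for $\alpha=0$.

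Second, the reduction for intermediate $\alpha$ does not work as you describe. After removing $\alpha$ shift $C_m$-factors the residual graph is $C_m[S^*]$ for some set of differences $S^*\subseteq\Z_n$, not a lexicographic product $C_m[n']$, so there is no induction on $\alpha$ to a smaller instance of the same theorem. Moreover, one cannot cover an arbitrary set of differences with shift $C_m$-factors: the requirement that each column be a permutation of the covered set while each row sums to $0$ forces the covered differences to be organized into zero-sum blocks. The paper does this by partitioning the differences into zero-sum triples obtained from Skolem and Langford sequences (Theorem~\ref{corD}), each triple contributing six $C_m$-factors --- which is why the entire proof is a case analysis on $\alpha\bmod 6$, with bespoke matrices and an appendix of ad hoc triples for the many small cases, and why the exceptions $(3,11,6)$, $(3,13,8)$, $(3,15,8)$ arise precisely where these sequence-based constructions fail. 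Your proposal identifies the correct difficulty at the boundary $\beta\in\{1,3\}$, $\alpha\in\{2,4\}$, but the combinatorial core of the theorem lies in the two mechanisms above, both of which are missing.
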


In the next section we introduce some tools and provide some powerful methods which we use in Section~\ref{Section C_m[n]} where we prove Theorem~\ref{C_m[n]}. In Section~\ref{Section Main} we prove the main result, Theorem~\ref{main}.

\section{Preliminaries}
\label{Section prelim}

In this section we state some known results and develop the tools we will need for the 2-factorizations.
We use $[a,b]$ to denote the set of integers from $a$ to $b$ inclusive.

\subsection{Langford and Skolem Sequences}

We will make extensive use of Skolem and Langford sequences, see \cite[Section VI.53]{Handbook}.

\begin{defn}[Skolem and Langford sequence]
Let $d$ and $\nu$  be positive integers, and let $1\leq k \leq 2\nu+1$.
A {\it $k$-extended Langford sequence} of {\it order $\nu$} and {\it defect $d$}, is a sequence of $\nu$ integers 
$(\ell_1, \ell_2, \ldots, \ell_\nu)$ such that
\[\{\ell_i, \ell_i +i + (d-1) \;|\; i=1, \ldots, \nu\} = \{1, 2, \ldots, 2\nu+1\}\setminus \{k\}.\] 
An extended Langford sequence of order $\nu$ with $k=2\nu+1$ is simply referred to as a {\em Langford sequence}, while
if $k=2\nu$, the sequence is usually called a \emph{hooked} Langford sequence.
When $d=1$ and $k=2\nu+1$ or $2\nu$, we obtain a {\it Skolem sequence} or {\it hooked Skolem sequence}, respectively. 
\end{defn}

We state the following existence results for Skolem and Langford sequences.
\begin{theorem}[\cite{Okeefe, Skolem}]\rule{0ex}{1ex}
\label{Skolem}
\begin{itemize}
\item
A Skolem sequence of order $\nu$ exists if and only if $\nu\equiv 0, 1 \pmod{4}$.
\item
A hooked Skolem sequence of order $\nu$ exists if and only if $\nu\equiv 2, 3 \pmod{4}$.
\end{itemize}
\end{theorem}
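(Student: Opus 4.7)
The plan has two parts: prove necessity by a clean counting argument, and prove sufficiency by exhibiting explicit constructions in each residue class.

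For necessity, suppose we have a Skolem sequence of order $\nu$, and let $a_k<b_k$ be the two positions at which the value $k$ occurs, so $b_k=a_k+k$. Double counting the sum of all positions gives
\[
2\sum_{k=1}^\nu a_k + \frac{\nu(\nu+1)}{2} \;=\; \sum_{k=1}^\nu(a_k+b_k) \;=\; 1+2+\cdots+2\nu \;=\; \nu(2\nu+1),
\]
hence $4\sum a_k = \nu(3\nu+1)$. The right side is divisible by $4$ exactly when $\nu\equiv 0,1\pmod 4$, which is the asserted necessary condition for Skolem. For the hooked case the position set is $\{1,\dots,2\nu+1\}\setminus\{2\nu\}$ with sum $2\nu^2+\nu+1$, and the same manipulation forces $\nu\equiv 2,3\pmod 4$.

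For sufficiency, the strategy is to give an explicit construction in each of the four residue classes, following Skolem's original 1957 construction for $\nu\equiv 0,1\pmod 4$ and O'Keefe's 1961 hooked construction for $\nu\equiv 2,3\pmod 4$. The typical pattern is to partition the values $\{1,\dots,\nu\}$ into a "large" block and a "small" block (with a small correction for specific residues), and to write down $a_k$ as a piecewise-linear function of $k$ on each block. One then checks that the pairs $\{a_k,a_k+k\}$ are pairwise disjoint and cover the required position set. A convenient way to organize the argument is to show that $a_k$ and $a_k+k$, restricted to each block, traverse disjoint arithmetic progressions whose union is exactly $\{1,\dots,2\nu\}$ (respectively $\{1,\dots,2\nu+1\}\setminus\{2\nu\}$).

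The main obstacle is precisely this verification: the construction itself is short, but confirming the bijection between the $\nu$ values and the $\nu$ difference-pairs requires a careful case analysis by the residue of $\nu$ modulo $4$ and by the block to which $k$ belongs. It is not deep mathematics, but it is error-prone, and is the reason the original published constructions are broken into several residue cases with separate base cases for tiny $\nu$. Once these four explicit families are written down and checked, Theorem~\ref{Skolem} follows immediately.
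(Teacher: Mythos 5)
The paper does not prove this statement at all: it is imported verbatim from Skolem \cite{Skolem} and O'Keefe \cite{Okeefe}, so there is no internal proof to compare against. Judged on its own terms, your proposal is half a proof. The necessity argument is complete and correct: writing $b_k=a_k+k$ and summing all occupied positions gives $4\sum_k a_k=\nu(3\nu+1)$ in the Skolem case and $4\sum_k a_k=\nu(3\nu+1)+2$ in the hooked case, and since $\nu(3\nu+1)\equiv 0\pmod{4}$ exactly when $\nu\equiv 0,1\pmod{4}$ and $\equiv 2\pmod{4}$ exactly when $\nu\equiv 2,3\pmod{4}$, the two congruence conditions follow. Nothing is missing in that direction.

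The sufficiency half, however, is only a description of what a proof would contain, not a proof. You say one should ``write down $a_k$ as a piecewise-linear function of $k$ on each block'' and then verify that the pairs $\{a_k,a_k+k\}$ tile the position set, but you never write down those functions, and they are the entire mathematical content of the direction you are trying to establish. There is no a priori reason the counting obstruction should be the only obstruction; the only known route to sufficiency is to exhibit the explicit families (or an equivalent recursive construction) and carry out the disjoint-arithmetic-progressions check, case by case in the residue of $\nu$ modulo $4$, including the small base cases. Until the formulas are on the page and verified, the existence claims for $\nu\equiv 0,1\pmod{4}$ (Skolem) and $\nu\equiv 2,3\pmod{4}$ (hooked) remain unproved. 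To close the gap you would need either to reproduce and check the explicit constructions from the two original papers, or simply to cite those papers as the source of the constructions, which is precisely what the authors do.
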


\begin{theorem}[\cite{Simpson}]
\label{lang}
A Langford sequence of order $\nu$ and defect $d$ exists if and only if
\begin{enumerate}
  \item $\nu\geq 2d-1$, and
  \item $\nu\equiv 0,1 \pmod{4}$ and $d$ is odd, or $\nu\equiv 0,3 \pmod{4}$ and $d$ is even.
\end{enumerate}
A hooked Langford sequence of order $\nu$ and defect $d$ exists if and only if
\begin{enumerate}
  \item $\nu(\nu-2d+1) + 2 \geq 0$, and
  \item $\nu\equiv 2,3 \pmod{4}$ and $d$ is odd, or $\nu\equiv 1,2 \pmod{4}$ and $d$ is even.
\end{enumerate}
\end{theorem}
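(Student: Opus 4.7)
The statement has two parts---one for Langford sequences and one for hooked Langford sequences---and each part has a necessity direction and a sufficiency direction. My plan is to handle necessity uniformly via a counting argument and sufficiency by direct constructions, bootstrapping from Theorem~\ref{Skolem} wherever possible.

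For necessity, let $p_j$ denote the position of the first occurrence of the value $j \in \{d, d+1, \ldots, d+\nu-1\}$, so that the second occurrence lies at position $p_j + j$. Summing over all $j$ gives
\[
2\sum_{j=d}^{d+\nu-1} p_j \;+\; \sum_{j=d}^{d+\nu-1} j \;=\; \Sigma,
\]
where $\Sigma$ is the sum of the positions used: $\nu(2\nu+1)$ in the Langford case and $\nu(2\nu+1)+1$ in the hooked case. Rearranging shows that $\nu(3\nu-2d+3)$ must be $\equiv 0 \pmod 4$ (respectively $\equiv 2 \pmod 4$); a short case split on the parity of $d$ then reproduces exactly the stated residues of $\nu \pmod 4$. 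The length inequalities follow from a more delicate counting argument: the largest gap $d+\nu-1$ must fit inside the available range $[1, 2\nu]$ (resp.\ $[1, 2\nu+1]$), and the remaining small-defect values must be placeable in the positions left free, which rearranges to $\nu \geq 2d-1$ (resp.\ $\nu(\nu - 2d + 1) + 2 \geq 0$).

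For sufficiency, I would produce explicit constructions covering every $(\nu, d)$ that meets the necessary conditions. The approach is two-pronged: (i) tabulate base solutions for small $\nu$ within each admissible residue class modulo $4$ and small values of $d$; and (ii) provide recursive ``extension'' constructions that take an existing (hooked) Langford sequence of order $\nu$ and defect $d$ and produce one of order $\nu+4$ with the same defect, typically by appending or embedding a short pattern and shifting. Theorem~\ref{Skolem} already supplies the $d=1$ case, which both serves as a base for the order recursion and can be fed into defect-raising glue constructions that realise the new pair of symbols in a constant-sized block.

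The main obstacle is the sufficiency construction itself: designing insertion patterns that preserve the Langford property across the full admissible range---particularly near the boundary $\nu = 2d-1$, where the sequence is ``tight'' and there is very little slack for insertion---requires delicate case-work, and small or boundary instances typically resist the uniform recursion and must be built by hand. Checking that the base cases together with the recursion steps cover every admissible $(\nu, d)$ without leaving gaps is the most intricate part of the argument, and is where Simpson's original proof is most technical.
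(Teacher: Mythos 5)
This theorem is not proved in the paper at all: it is quoted verbatim from Simpson's 1983 paper \cite{Simpson} and used as a black box, so there is no internal proof to compare yours against. Judged on its own terms, your proposal has a genuine gap. The necessity half is essentially sound: your position-sum identity does yield $2\sum_j p_j = \nu(3\nu-2d+3)/2$ in the Langford case (and that quantity plus $1$ in the hooked case), and the integrality of $\sum_j p_j$ gives exactly the stated congruence classes for $\nu \pmod 4$. However, your justification of the inequalities is too weak as stated: requiring only that ``the largest gap $d+\nu-1$ fit inside the available range'' gives $d \le \nu$, not $\nu \ge 2d-1$. The correct argument is a second counting step: $\sum_j (b_j - a_j) = \sum_{j=d}^{d+\nu-1} j = \nu d + \nu(\nu-1)/2$, while $\sum_j b_j - \sum_j a_j$ is at most the sum of the $\nu$ largest positions minus the sum of the $\nu$ smallest, namely $\nu^2$ (respectively $\nu^2+1$ in the hooked case); rearranging gives $\nu \ge 2d-1$ (respectively $\nu(\nu-2d+1)+2 \ge 0$).

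The more serious problem is that the sufficiency direction --- which is the entire substantive content of the theorem, and the only direction this paper actually uses --- is not proved but merely planned. You describe a scheme of base cases plus order-increasing recursions seeded by Theorem~\ref{Skolem}, which is indeed the shape of Simpson's argument, but you exhibit no explicit sequences, no insertion patterns, and no verification that the recursion covers every admissible pair $(\nu,d)$ down to the tight boundary $\nu = 2d-1$. You acknowledge this yourself. As it stands the proposal establishes ``only if'' (modulo the fix above) and leaves ``if'' entirely open, so it cannot be accepted as a proof of the statement.
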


We will frequently use the well known result that given a Langford sequence of order $\nu$ and defect $d$, $L=(\ell_1, \ell_2, \ldots, \ell_\nu)$, then the $\nu$ triples $T_i=\{t_{i1}, t_{i2}, t_{i3}\}$, where  $t_{i1} = i + (d - 1)$, $t_{i2} = \ell_i + \nu + (d-1)$ and $t_{i3} = \ell_i + i + \nu + 2(d-1)$, for $i=1, \ldots, \nu$, satisfy the following properties:
\begin{enumerate}
\item $\{T_i, -T_i \mid i=1,\ldots, \nu\}$ is a partition of the set 
$\pm[d, d+3\nu -1]$;
\item $t_{i1} + t_{i2} + t_{i3}=0$ for $i=1,2,\ldots, \nu$.
\end{enumerate}
These triples are said to be {\em induced by the Langford sequence.}  If we use a hooked Langford sequence, $\{T_i, -T_i \mid i=1,\ldots, \nu\}$ partition the set $\pm[d, d+3\nu]\setminus\{\pm(d+ 3\nu-1)\}$ instead.

\subsection{Factoring Cayley Graphs}

We will make extensive use of 2-factorizations of $C_m[n]$. 
We take the point set of $C_m[n]$ to be $\Z_m\times\Z_n$ and denote the points by $x_i$, where $x\in \Z_m$, $i\in \Z_n$.  
When we are talking about an individual cycle we may reorder the points so that the original $m$-cycle is $(0,1\ldots, m-1)$. 
Given a set of cycle factors, $\cC$, of $C_m[n]$, and a set  $S\subseteq \Z_n$ we say that $\cC$ {\em covers} $S$ if for every $d\in S$, the edges of the form $x_i(x+1)_{i+d}$ appear in some cycles of $\cC$.
We say that $\cC$ {\em exactly covers} $S$ if every edge in $\cC$ is of this form.

We will make use of the notion of a Cayley graph on a group $\Gamma$. 
Given $S\subseteq \Gamma$, the {\em Cayley Graph} cay$(\Gamma, S)$ is a graph with vertex set $G$ and edge set $\{a,a+d \mid d\in S\}$.
When $\Gamma = \Z_n$ this graph is known as a {\em circulant graph} and denoted $\langle S\rangle_n$. We note that the edges generated by $d\in S$ are the same as those generated by $-d\in -S$, so that cay$(\Gamma,S) = {\rm cay}(\Gamma, \pm S)$, and that the degree of each point is $|S\cup (-S)|$. 
We will use the following Theorem due to Bermond, Favaron, and Mah\'{e}o, \cite{Bermond}.
\begin{theorem}{\cite{Bermond}}
\label{berm}
Every connected 4-regular Cayley graph on a finite Abelian group has a Hamilton factorization.
\end{theorem}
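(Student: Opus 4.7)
The plan is to analyze $G = \text{cay}(\Gamma, S)$ by cases according to the number of involutions in $S$. Since $S = -S$ has four elements and each involution contributes one element while each non-involution contributes a pair, exactly three configurations are possible: (i) $S = \{a, -a, b, -b\}$ with no involutions; (ii) $S = \{a, -a, b, c\}$ where $b, c$ are distinct involutions; (iii) $S$ consists of four involutions. In every case connectedness of $G$ forces $\langle S\rangle = \Gamma$.

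In case (i), if $\langle a\rangle \cap \langle b\rangle = \{0\}$ then $\Gamma = \langle a\rangle \oplus \langle b\rangle$ and $G$ is isomorphic to the toroidal grid $C_{|a|}\,\square\,C_{|b|}$, which has a classical Hamilton decomposition into two ``diagonal'' spanning cycles generated by $a+b$ and $a-b$. If $\langle a\rangle \cap \langle b\rangle \neq \{0\}$, write $\Gamma$ as a quotient of $\langle a\rangle \times \langle b\rangle$ by a cyclic subgroup and construct two edge-disjoint Hamilton cycles explicitly, by interleaving runs of $a$-steps and $b$-steps in a zig-zag (brick-wall) pattern, with the second cycle a shift of the first. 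Verifying that the pattern visits every element of $\Gamma$ exactly once reduces to arithmetic conditions on the orders of $a$, $b$, and $a+b$ relative to $|\Gamma|$.

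Cases (ii) and (iii) are more tractable because involutions contribute perfect matchings. In (ii), the $\pm a$-edges decompose into $[\Gamma : \langle a\rangle]$ disjoint cycles, one per coset of $\langle a\rangle$; the matchings $M_b$ and $M_c$ arising from the two involutions can be used to stitch these cycles together, and since $\{a, b, c\}$ generates $\Gamma$, one can choose the stitching so that the result is two Hamilton cycles. In (iii), $G$ is the union of four perfect matchings and $\Gamma$ contains an elementary abelian $2$-subgroup of rank at least $2$; a Hamilton decomposition is obtained by pairing the matchings and showing, via the generation hypothesis together with a connectivity argument, that at least one pairing yields two connected $2$-factors.

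The principal obstacle will be case (i) with nontrivial $\langle a\rangle \cap \langle b\rangle$: here one cannot fall back on the toroidal grid and the explicit Hamilton cycle construction requires careful bookkeeping of partial sums of the step sequence modulo the various subgroup orders, to ensure both that the walk visits each vertex exactly once and that the two constructed cycles are edge-disjoint. In comparison, cases (ii) and (iii) reduce essentially to checking the connectedness of a few explicit $2$-factors.
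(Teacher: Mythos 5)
This statement is not proved in the paper at all: it is imported verbatim from Bermond, Favaron and Mah\'eo \cite{Bermond} and used as a black box, so your attempt can only be measured against that external proof, which is a substantial argument in its own right. Against that standard your sketch has concrete gaps. The most clear-cut is case (iii): if $x$ and $y$ are distinct involutions, then every component of $M_x\cup M_y$ is the $4$-cycle $(v,\,v+x,\,v+x+y,\,v+y)$, so each of the three pairings of your four matchings is a disjoint union of $4$-cycles and is a Hamilton cycle only when $|\Gamma|=4$; but in case (iii) connectedness forces $\Gamma\cong(\Z_2)^3$ or $(\Z_2)^4$, so \emph{no} pairing works and the two Hamilton cycles must each mix edges coming from several generators. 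The claim that ``at least one pairing yields two connected $2$-factors'' is therefore false, and this case needs a genuinely different construction.

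There are also problems in case (i). When $\Gamma=\langle a\rangle\oplus\langle b\rangle$ the graph is indeed $C_{|a|}\,\square\,C_{|b|}$ and does have a Hamilton decomposition (Kotzig), but your description of it as two ``diagonal'' cycles generated by $a+b$ and $a-b$ cannot be right: edges of difference $a\pm b$ are not edges of $\mathrm{cay}(\Gamma,S)$ at all, and $\langle a+b\rangle$ need not even be all of $\Gamma$ (e.g.\ $\Z_3\oplus\Z_3$ with $a=(1,0)$, $b=(0,1)$). The correct decomposition is a zig-zag pattern alternating $a$- and $b$-steps. Finally, the sub-case $\langle a\rangle\cap\langle b\rangle\neq\{0\}$ --- which you correctly flag as the principal obstacle --- is precisely where the difficulty of the Bermond--Favaron--Mah\'eo theorem resides, and ``interleaving runs \dots{} with careful bookkeeping'' is a statement of intent rather than an argument. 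As it stands the proposal establishes the result only in the split sub-case of (i), and would need to be substantially reworked in cases (ii), (iii) and the non-split part of (i).
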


We extend the notion of Cayley graphs to $C_m[n]$.
Given an $m$-cycle $C_m = (0,1\ldots, m-1)$ with an orientation, which we take to be ascending, and a set $S\subseteq\Z_n$ we define the Cayley graph $C_m[S]$ to be the graph with point set $\mathbb{Z}_m\times \Z_n$ and edges $i_{x}(i+1)_{x+d}$, $i\in \mathbb{Z}_m$, $x\in\Z_n$ and $d\in S$. We call $d$ a {\em (mixed) difference}. Note that $C_m[S] = {\rm cay}(\Z_m\times \Z_n, \{1\}\times S)$ and that the degree of each point in this graph is $2|S|$.
A set of factors ${\cal C}$ of $C_m[S]$ is said to (exactly) {\em cover} $C_m[S]$ if $\cal C$ (exactly) covers $S$.

In \cite{ASSW}, Alspach, Schellenberg, Stinson and Wagner showed that there exists a $C_p$-factorization of $C_m[p]$ when $p$ is an odd prime. In fact, many of their results hold for any odd integer. They define the notion of an $i$-projection, which we also extend to a reverse $i$-projection.
\begin{defn}[(reverse) $i$-projection]
Let $n$ and $m$ be odd integers with $n\geq m\geq 3$. Given a directed Hamiltonian cycle, $H = (x_0, x_1,\ldots, x_{n-1})$, of $K_n$, the {\em $i$-projection} of $H$ is the $n$-cycle in $C_m[n]$ defined by 
$$(i_{x_0}, (i+1)_{x_1}, \ldots,(i+m-1)_{x_{m-1}}, i_{x_m}, (i+1)_{x_{m+1}}, i_{x_{m+2}}, (i+1)_{x_{m+3}}, \ldots, (i+1)_{x_{n-1}}).$$
The {\em reverse $i$-projection} of $H$ is the $n$ cycle in $C_m[n]$ defined by 
$$(i_{x_0}, (i-1)_{x_1}, \ldots,(i-m+1)_{x_{m-1}}, i_{x_m}, (i-1)_{x_{m+1}}, i_{x_{m+2}}, (i-1)_{x_{m+3}}, \ldots, (i-1)_{x_{n-1}}).$$
\end{defn}
In \cite{ASSW} the following lemma is proved. The extension to the case of reverse cycles is straightforward. 
\begin{lemma}[\cite{ASSW}]
\label{ASSW lemma}
If  $H$ is a directed Hamiltonian cycle of $K_n$, then the $i$-projections of $H$ yield a $C_n$-factor of $C_m[n]$, $\cC$ say, and if the edge $i_x,(i+1)_y$ appears in $C\in\cC$, then the edges $j_x,(j+1)_y$ appear in some cycle of $\cC$ for every $j\in \Z_m$.
Similarly, the reverse $i$-projections of $H$ yield a $C_n$-factor of $C_m[n]$, $\cC'$ say, and if the edge $i_x,(i-1)_y$ appears in $C\in\cC'$, then the edges $j_x,(j-1)_y$ appear in $\cC'$ for every $j\in \Z_m$.
\end{lemma}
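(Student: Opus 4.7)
The plan is to carry out a direct structural verification in three tasks: (a) each $i$-projection is a well-formed $n$-cycle of $C_m[n]$; (b) as $i$ ranges over $\Z_m$, the resulting $n$-cycles partition $V(C_m[n])$; (c) the shift property for edges. The reverse case will then follow by an almost identical argument.

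For (a), I would check that consecutive vertices in the $i$-projection always have first coordinates differing by $\pm 1 \pmod m$, so each consecutive pair is an edge of $C_m[n]$. In the ``head'' of the sequence the first coordinate increments ($i, i+1, \ldots, i+m-1$); then it wraps from $i+m-1$ to $i$ (a $\pm 1$ step mod $m$); and in the ``tail'' it alternates $i, i+1, i, i+1, \ldots$. Since both $m$ and $n$ are odd, $n-m$ is even, so the alternation ends at $(i+1)_{x_{n-1}}$ and closing back to $i_{x_0}$ is again a $\pm 1$ step. The $n$ vertices are distinct because their second coordinates are the distinct values $x_0, \ldots, x_{n-1}$.

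For (b), I would show that each vertex $j_{x_k}$ belongs to exactly one $i$-projection. If $k \in [0,m-1]$ it lies in the $(j-k)$-projection; if $k \in [m,n-1]$ with $k-m$ even it lies in the $j$-projection; and if $k-m$ is odd it lies in the $(j-1)$-projection. This gives $m$ vertex-disjoint $n$-cycles, which is exactly a $C_n$-factor $\cC$.

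For (c), the key observation is that the $(i_0+c)$-projection is obtained from the $i_0$-projection by adding $c$ to every first coordinate. Consequently, if the edge $i_x(i+1)_y$ occurs in the $i_0$-projection, then setting $c = j-i$ produces the edge $j_x(j+1)_y$ in the $(i_0 + j - i)$-projection, which lies in $\cC$. Since $j$ is arbitrary in $\Z_m$, this proves the shift property. The reverse-projection case follows by replacing ``$+1$'' with ``$-1$'' throughout; the parity bookkeeping that $n-m$ is even is unchanged. The main obstacle is really only this bookkeeping: one must be careful that the alternation in the tail closes up correctly at position $n-1$ and that $i_0 + c$ sweeps out all of $\Z_m$, both of which rely only on the oddness of $m$ and $n$. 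No ideas beyond direct verification are needed.
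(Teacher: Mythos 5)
Your direct verification is correct: the head/tail parity bookkeeping (using that $n-m$ is even so the alternating tail ends at level $i+1$), the partition of the vertex set across the $m$ projections, and the translation argument for the shift property are all sound. The paper itself gives no proof of this lemma---it simply cites \cite{ASSW} and remarks that the reverse case is straightforward---so your argument supplies exactly the routine verification that the paper defers to its reference.
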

From this we may conclude the following lemma.
\begin{lemma}
\label{nFactorHam}
If $\langle S\rangle_n$ 
decomposes into $t$ Hamiltonian factors, then there exist $2t$ $C_n$-factors which exactly cover $C_m[\pm S]$.
\end{lemma}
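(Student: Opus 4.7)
The plan is to apply Lemma~\ref{ASSW lemma} and its reverse analogue to each of the $t$ Hamilton factors separately, producing $2t$ candidate $C_n$-factors, and then to show that these factors together decompose $C_m[\pm S]$. Fix a Hamilton decomposition $H_1,\dots,H_t$ of $\langle S\rangle_n$ and orient each $H_j$ arbitrarily. Let $\cC_j$ (respectively $\cC'_j$) denote the $C_n$-factor of $C_m[n]$ produced by the forward (resp.\ reverse) $i$-projections of $H_j$ for $i\in\Z_m$. Every edge of $\cC_j\cup\cC'_j$ has second-coordinate difference equal to $\pm(x_{k+1}-x_k)$ for some consecutive pair $x_k,x_{k+1}$ of $H_j$, which lies in $\pm S$; hence each of the $2t$ factors is a subgraph of $C_m[\pm S]$.

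A counting argument now reduces the problem to edge-disjointness. Because $\langle S\rangle_n$ is the edge-disjoint union of $t$ Hamilton (hence $2$-regular) factors, it is $2t$-regular, so $|\pm S|=2t$ and $|E(C_m[\pm S])|=2t\cdot mn$. On the other hand the $2t$ candidate factors contain $2t\cdot mn$ edges in total, so once pairwise edge-disjointness is established, they must exactly cover $C_m[\pm S]$.

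To verify disjointness, fix an edge $e=c_x(c+1)_y$ of $C_m[\pm S]$; since the $H_j$ partition the edges of $\langle S\rangle_n$, the unordered pair $\{x,y\}$ lies on a unique $H_{j_0}=(x_0,\dots,x_{n-1})$, so $e$ can only be covered by $\cC_{j_0}$ or $\cC'_{j_0}$, and it suffices to check that exactly one of these contains $e$. I would analyse which ordered second-coordinate pairs $(x_k,x_{k+1})$ appear in class pair $(c,c+1)$: the forward $c$-projection contributes its ``main'' edge at position $0$ and all its ``zigzag'' edges at positions $m,m+1,\dots,n-1$, while each of the other $m-1$ forward $i$-projections ($i\ne c$) contributes exactly one ``side'' edge to this class pair at some position in $[1,m-1]$. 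Assembling these shows that $\cC_{j_0}$ places the ordered pair $(x_k,x_{k+1})$ in class pair $(c,c+1)$ for $k\in[0,m-1]\cup\{m,m+2,\dots,n-2\}$ and the reversed pair $(x_{k+1},x_k)$ for $k\in\{m+1,m+3,\dots,n-1\}$ (using that $n-m$ is even, as $m,n$ are both odd). A symmetric computation for the reverse projection gives $\cC'_{j_0}$ the complementary pattern, so for every edge of $H_{j_0}$ precisely one orientation appears in $\cC_{j_0}$ and the other in $\cC'_{j_0}$; in particular $e$ is covered exactly once. The main obstacle is this parity bookkeeping in the zigzag portion: one must verify carefully that the forward and reverse projections together --- but never the same one --- carry both orientations of each $H_j$-edge in every class pair $(c,c+1)$, so that all $2m$ copies of each edge of $H_j$ in $C_m[\pm S]$ are covered without duplication.
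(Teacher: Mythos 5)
Your proposal follows exactly the paper's route: orient each Hamilton factor $H_1,\dots,H_t$ of $\langle S\rangle_n$ and take the $i$-projections and reverse $i$-projections for all $i\in\Z_m$, invoking Lemma~\ref{ASSW lemma} to get $2t$ $C_n$-factors. The only difference is that you explicitly verify the orientation-complementarity of the forward and reverse projections (your parity bookkeeping is correct, since $n-m$ is even), a detail the paper leaves implicit in its appeal to Lemma~\ref{ASSW lemma}; so this is the same proof, carried out more carefully.
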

\begin{proof}
Let the Hamiltonian factors of $G$ be $H_1,\ldots,H_t$ and choose an orientation for each factor. For each factor we take the $i$-projection and the reverse $i$-projection for $0\leq i< m$ and apply Lemma \ref{ASSW lemma}.
\end{proof}

\begin{lemma}
\label{2 factor Lemma}
Let $n\geq m\geq 3$ be odd integers and let $0<d_1,d_2<n$. 
If any linear combination of $d_1$ and $d_2$ is coprime to $n$,
then there exist four $C_n$-factors of $C_m[n]$ which between them exactly cover the differences $\pm d_j$, $j=1,2$.
\end{lemma}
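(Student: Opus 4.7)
The plan is to combine Theorem~\ref{berm} with Lemma~\ref{nFactorHam}. First I would unpack the hypothesis: the assertion that ``some linear combination of $d_1$ and $d_2$ is coprime to $n$'' is equivalent to $\gcd(d_1,d_2,n)=1$, which is precisely the standard criterion for the circulant graph $G=\langle\{d_1,d_2\}\rangle_n$ to be connected. Provided $d_1\not\equiv\pm d_2\pmod n$, the connection set $\pm\{d_1,d_2\}$ consists of four distinct nonzero elements of $\Z_n$, so $G$ is a connected $4$-regular Cayley graph on the finite abelian group $\Z_n$.

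Theorem~\ref{berm} then applies directly and decomposes $G$ into two edge-disjoint Hamilton cycles $H_1, H_2$. Feeding this Hamilton factorization into Lemma~\ref{nFactorHam} with $S=\{d_1,d_2\}$ and $t=2$ immediately yields $2t=4$ edge-disjoint $C_n$-factors of $C_m[n]$ that exactly cover $C_m[\pm\{d_1,d_2\}]$, which is the desired conclusion. Thus the whole argument is essentially a two-step invocation of previously established results — no new graph construction is required, since the $i$-projection machinery in Lemma~\ref{ASSW lemma} already does the lifting from $\Z_n$ to $C_m[n]$ for us.

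The only mildly delicate point — and in this context it is the main obstacle — is this reinterpretation of the linear-combination hypothesis and the verification that $G$ really is $4$-regular and connected so that Theorem~\ref{berm} applies. In the degenerate subcase $d_1+d_2\equiv 0\pmod n$ (or $d_1=d_2$), the set $\pm\{d_1,d_2\}$ collapses to at most two elements and $G$ is only $2$-regular; however, since $n$ is odd and $\gcd(d_1,d_2,n)=1$, this forces $\gcd(d_1,n)=1$, so $G$ is already a single Hamilton cycle and Lemma~\ref{nFactorHam} with $t=1$ produces two (rather than four) $C_n$-factors. Such degeneracies seem to be implicitly excluded by the phrasing ``the differences $\pm d_j$, $j=1,2$'', which suggests four distinct values; aside from flagging these, no further work is needed.
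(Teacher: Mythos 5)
Your proof is correct and follows essentially the same route as the paper's: the linear-combination hypothesis makes the $4$-regular circulant $\langle\{d_1,d_2\}\rangle_n$ connected, Theorem~\ref{berm} splits it into two Hamilton cycles, and Lemma~\ref{nFactorHam} lifts these to four $C_n$-factors of $C_m[n]$. Your additional remark about the degenerate case $d_1\equiv\pm d_2\pmod n$ is a point the paper passes over silently; as you observe, it is implicitly excluded by the statement and never arises in the paper's applications.
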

\begin{proof}
If  $ad_1+bd_2$ is coprime to $n$, for some $a$ and $b$, then the group generated by $d_1$ and $d_2$ is $\Z_n$ and the 
the 4-regular circulant graph $\langle\, \{d_1,d_2\}\,\rangle_n$ is connected. 
Thus by Theorem~\ref{berm} $\langle\, \{d_1,d_2\}\,\rangle_n$ has a Hamiltonian factorization. Lemma~\ref{nFactorHam} then gives the result.
\end{proof}
In particular, we note that the conditions of Lemma~\ref{2 factor Lemma} will be satisfied whenever either $d_1$, $d_2$ or their difference $d_1-d_2$ is coprime to $n$.

\begin{lemma}
\label{1 factor Lemma}
Let $n\geq m \geq 3$ be odd integers and let $0<d<n$ be coprime to $n$. There exist two $C_n$-factors of $C_m[n]$ which between them exactly cover the differences $\pm d$.
\end{lemma}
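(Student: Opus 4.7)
The plan is to deduce this directly from Lemma \ref{nFactorHam} with the singleton choice $S = \{d\}$. The only hypothesis that needs to be checked is that $\langle\{d\}\rangle_n$ decomposes into Hamiltonian factors. Since $\gcd(d,n)=1$, the element $d$ generates all of $\mathbb{Z}_n$, so $\langle\{d\}\rangle_n$ is a connected $2$-regular graph on $n$ vertices; that is, it is itself a Hamiltonian cycle. It therefore admits a (trivial) Hamiltonian decomposition with $t = 1$, and Lemma \ref{nFactorHam} immediately supplies $2t = 2$ $C_n$-factors of $C_m[n]$ which between them exactly cover $C_m[\pm d]$, as required.

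Equivalently, the construction can be exhibited explicitly: orient the $n$-cycle $(0, d, 2d, \ldots, (n-1)d)$ in its two possible directions, and form the $0$-projection of one orientation together with the reverse $0$-projection of the other via Lemma \ref{ASSW lemma}. That lemma guarantees that each such projection is a $C_n$-factor of $C_m[n]$ and that, whenever an edge $i_x(i+1)_y$ appears in the projected factor, every translate $j_x(j+1)_y$ for $j \in \mathbb{Z}_m$ also appears; hence between them the two factors use every edge of $C_m[\pm d]$ exactly once. There is no genuine obstacle here: the statement is essentially the $|S|=1$ boundary case of Lemma \ref{2 factor Lemma}, and, unlike that proof, it does not require Theorem \ref{berm}, since a single cycle is its own Hamiltonian decomposition.
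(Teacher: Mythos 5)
Your proposal is correct and follows the paper's own argument exactly: both observe that since $\gcd(d,n)=1$ the circulant graph $\langle\{d\}\rangle_n$ is a single Hamiltonian cycle, hence trivially Hamiltonian-decomposable with $t=1$, and then apply Lemma~\ref{nFactorHam} to obtain the two $C_n$-factors. The extra explicit description via projections is a faithful unpacking of the same construction, not a different route.
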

\begin{proof}
Create the 2-regular circulant graph of $K_n$ using the difference $d$, $\langle \,\{d\}\, \rangle_n$;  this graph is a Hamiltonian cycle. Applying Lemma~\ref{nFactorHam} gives the result.
\end{proof}

The proof of Theorem 5 of \cite{ASSW} shows the following result for $b=1$.
In that theorem the result is only claimed for $n$ prime, but it is straightforward to see that it holds for any odd integer $n$. 
We note that, since $n$ is odd, any power of 2 will be coprime to $n$, though in practice we will use this theorem with $b=1$, except when explicitly noted otherwise.
\begin{theorem}[\cite{ASSW}]
\label{5 classes}
If $n\geq m\geq 3$ are odd integers and $b\in\Z_n$ is coprime to $n$, then there exist five $C_n$-factors of $C_m[n]$ which between them exactly cover $C_m[\pm\{0, b, 2b\}]$.
\end{theorem}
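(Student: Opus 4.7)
The plan is to reduce to the case $b=1$, which is essentially the content of Theorem~5 of \cite{ASSW}, via a suitable graph automorphism of $C_m[n]$. Define $\phi:\Z_m\times\Z_n\to\Z_m\times\Z_n$ by $\phi(x,i)=(x,bi)$. Because $\gcd(b,n)=1$, multiplication by $b$ is a bijection of $\Z_n$, so $\phi$ is a vertex permutation of $C_m[n]$. Since adjacency in $C_m[n]$ depends only on the first coordinates, which $\phi$ fixes, $\phi$ is a graph automorphism of $C_m[n]$.

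Under $\phi$, an edge $x_i(x+1)_{i+d}$ of mixed difference $d$ is sent to $x_{bi}(x+1)_{bi+bd}$, which has mixed difference $bd$. Hence $\phi$ restricts to a graph isomorphism
\[
\phi:\ C_m[\pm\{0,1,2\}]\ \longrightarrow\ C_m[\pm\{0,b,2b\}],
\]
and since a graph isomorphism carries $n$-cycles to $n$-cycles, any decomposition of $C_m[\pm\{0,1,2\}]$ into five $C_n$-factors of $C_m[n]$ pushes forward under $\phi$ to a decomposition of $C_m[\pm\{0,b,2b\}]$ into five $C_n$-factors of $C_m[n]$. Thus it suffices to establish the result for $b=1$.

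For $b=1$, the construction in the proof of \cite[Theorem~5]{ASSW} writes down five explicit $n$-cycles in $C_m[n]$ and obtains the five $C_n$-factors by letting the cyclic shift $(x,i)\mapsto(x,i+1)$ act on them; a direct edge-counting check shows that each mixed difference in $\pm\{0,1,2\}$ is covered exactly once. The listed cycles interleave steps of $C_m$ with increments $0,\pm 1,\pm 2$ in the $\Z_n$ coordinate, and the resulting five factors together use each of the $5mn$ edges of $C_m[\pm\{0,1,2\}]$ exactly once.

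The main obstacle is that \cite{ASSW} globally assumes $n$ prime, so one must verify that this particular construction does not rely on primality. A careful reading shows that the only arithmetic it needs is that $n$ is odd with $n\ge m\ge 3$: the shift $(x,i)\mapsto(x,i+1)$ acts freely on the second coordinate for any $n$, and the claim that the translates of the five listed cycles partition the mixed-difference edges is verified by cycle-counting rather than by any divisibility argument. Consequently the $b=1$ case holds for every odd $n\ge m\ge 3$, and composing with $\phi$ completes the proof.
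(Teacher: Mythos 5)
Your proposal is correct and follows essentially the same route as the paper: the paper's proof likewise cites the $b=1$ case from the proof of Theorem~5 of \cite{ASSW} (noting separately that the construction there does not actually require $n$ prime, only $n$ odd) and then handles general $b$ by observing that multiplication by $b$ is an automorphism of $\Z_n$. You have simply written out the details of that automorphism argument explicitly.
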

\begin{proof}
The case $b=1$ is given in the proof of Theorem 5 of \cite{ASSW}. For $b\geq 1$, we note that multiplication by $b$ is an automorphism of $\Z_n$.
\end{proof}

\begin{theorem}
\label{constructionD}
Let $T$ be a subset of $\Z_n$. If there exists a  $|T| \times \ell$ matrix $A=[a_{ij}]$ with entries from $T$ such that:
\begin{enumerate}
    \item 
    \label{constructionD cond 1}
    each row of $A$ sums to $0$,
    \item
    \label{constructionD cond 2}
	 each column of $A$ is a permutation of $T$,
\end{enumerate}
then there exists a $C_\ell$-factorization of $C_\ell[T]$. Moreover, if we also have that:
\begin{enumerate}
\setcounter{enumi}{2}
    \item 
    \label{constructionD cond 3}
	$T$ is closed under taking negatives,
\end{enumerate}  
  then there is a $C_m$-factorization of $C_m[T]$ for any $m \geq \ell$ with $m\equiv \ell \pmod{2}$.
\end{theorem}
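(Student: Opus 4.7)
The plan is to interpret each row of $A$ as a recipe for building a $C_\ell$-factor, using the fact that $C_\ell[T]$ is vertex-transitive along the second coordinate.

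For the first part, given a row $(a_{r,0}, a_{r,1}, \ldots, a_{r,\ell-1})$ of $A$, and for each $x \in \Z_n$, I would form the sequence of vertices
\[
\bigl(0_x,\ 1_{x+a_{r,0}},\ 2_{x+a_{r,0}+a_{r,1}},\ \ldots,\ (\ell-1)_{x+a_{r,0}+\cdots+a_{r,\ell-2}}\bigr),
\]
which closes up into an $\ell$-cycle because the row sum $a_{r,0}+\cdots+a_{r,\ell-1}$ equals $0$ in $\Z_n$ (condition~\ref{constructionD cond 1}). As $x$ ranges over $\Z_n$, the first coordinates $0,1,\ldots,\ell-1$ are each visited exactly once per value of the second coordinate, so these $n$ cycles partition the vertex set and constitute a $C_\ell$-factor $F_r$. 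The edge on position $i$ of $F_r$ is $i_y(i+1)_{y+a_{r,i}}$ for each $y$, so $F_r$ uses precisely the differences $\{a_{r,0},a_{r,1},\ldots,a_{r,\ell-1}\}$, each appearing between two fixed consecutive layers. Since column $i$ of $A$ is a permutation of $T$ (condition~\ref{constructionD cond 2}), as $r$ ranges over the $|T|$ rows, the $|T|$ edges produced between layers $i$ and $i+1$ starting at a fixed $y$ use each difference in $T$ exactly once. Hence $\{F_r : r\in[1,|T|]\}$ is a $C_\ell$-factorization of $C_\ell[T]$.

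For the second part, suppose additionally that $T=-T$, and let $m\geq \ell$ with $m\equiv \ell\pmod 2$, so that $m-\ell=2k$ for some $k\geq 0$. I would build a $|T|\times m$ matrix $A'$ by appending to $A$ the block matrix consisting of $k$ side-by-side copies of the $|T|\times 2$ matrix $B$ whose $t$-th row is $(t,-t)$. Each row of $A'$ still sums to $0$, because each appended pair $(t,-t)$ contributes $0$. Each appended column of $A'$ is either a listing of $T$ or of $-T$; since $T=-T$, both are permutations of $T$, while the original columns are permutations of $T$ by hypothesis. Thus $A'$ satisfies the hypotheses of the first part with $\ell$ replaced by $m$, producing a $C_m$-factorization of $C_m[T]$.

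There is no serious obstacle here: the only care needed is to verify that the cycles $C_{r,x}$ really are vertex-disjoint across $x$ (which is immediate from the layer structure) and to check that the parity hypothesis $m\equiv\ell\pmod 2$ is exactly what is needed to append negative-pair columns. I expect the write-up to be short and essentially a direct verification.
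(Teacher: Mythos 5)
Your proposal is correct and is essentially the paper's own argument: the paper likewise defines each factor by taking the partial sums of a row as subscripts of an $\ell$-cycle, develops it over $\Z_n$ (using condition 1 for closure and condition 2 to see that the columns cover each difference exactly once between consecutive layers), and extends to $m=\ell+2q$ by appending columns of paired entries $t,-t$.
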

\begin{proof}
For $1\leq i,h \leq \ell$, we set $s_{i,h} = \sum_{j=1}^{h} a_{ij}$. Note that, by assumption, 
$s_{i,\ell}=0$ for any $i$; also,  $s_{i,1}=a_{i1}$ and $s_{i,h+1}-s_{i,h} = a_{i,h+1}$ for any $i,h$,
where it is understood that $s_{i,\ell+1}=s_{i,1}$ and $a_{i,1}=a_{i,\ell+1}$.
For $i=1,2, \ldots, |T|$, we define the $\ell$-cycles $C_i$ as follows:
\[
C_i = (1_{s_{i1}}, 2_{s_{i2}}, \ldots, \ell_{s_{i,\ell}}),
\]
and we denote by $F_{i}$ the $C_\ell$-factor we obtain by developing $C_i$ on the subscripts, namely,
$F_{i} = \{C_{i}+x\mid x \in \Z_n\}$,
 where $C_{i}+x$  is obtained from $C_i$ by replacing each vertex $h_k$ with $h_{k+x}$.

We claim that $\mathcal{F}= \{F_i \;|\; i=1,2, \ldots, |T|\}$ is a $C_\ell$-factorization of $C_\ell[T]$. 
First recall that each edge of $C_{\ell}[T]$ has the form $[h_k, (h+1)_{k+t}]$ for some 
$(h,k)\in \Z_\ell \times \Z_n$ and $t \in T$. Since, by assumption, any column of $A=[a_{ij}]$ is a 
permutation of $T$, there is an integer $i$ such that
$a_{i,h+1}=t$ for any $t\in T$. Note that
$[h_{s_{i,h}}, (h+1)_{s_{i,h+1}}]\in C_i$; also, $s_{i,h+1} = s_{i,h} + a_{i,h+1} = s_{i,h} + t$.
Therefore, $[h_{k}, (h+1)_{k+t}]\in C_{i-s_{i,h}}+k$ and the result follows.

In order to prove the second part we only need show that there exists 
a $|T|\times m$ matrix with entries in $T$ satisfying conditions 1 and 2 for any 
$m\equiv \ell \pmod{2}$ and $m \geq \ell$. Let $m= \ell+ 2q$ and $T=\{t_1,t_2,\ldots, t_{|T|}\}$.  We create the matrix
\[
  A'=\left[
  \begin{array}{ccccc}
    t_1 & -t_1 & \ldots & t_1 & -t_1 \\
    t_2 & -t_2 & \ldots & t_2 & -t_2 \\
    \vdots &   \vdots & \ldots & \vdots & \vdots    \\
    t_{|T|} & -t_{|T|} & \ldots & t_{|T|} & -t_{|T|} 
  \end{array}
  \right].
\]
It is easy to check that the matrix $[A\;\; A']$ is a $|T|\times m$ matrix satisfying conditions 1 and 2, and this completes the proof.
\end{proof}
\begin{theorem}
\label{corD}
Let $n\geq 3$ be an odd integer and let $S$ be a subset of $\Z_{n}\setminus\{0\}$ which is closed under taking negatives. If there exists a partition of $S$ into subsets of odd sizes, 
  $T_1,-T_1, T_2, -T_2, \ldots, T_{u}, -T_{u}$, such that
  \[
    \sum_{t \in T_i} t = 0\;\;\;\text{ for any } i=1,2,\ldots, u,
  \]
then there exists a $C_m$-factorization of $C_m[S]$ for any odd $m\geq max\{|T_1|,$ $|T_2|,$ $\ldots, |T_u|\}$. 
\end{theorem}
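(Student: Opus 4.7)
The plan is to invoke Theorem~\ref{constructionD} on each pair $T_i\cup(-T_i)$ separately and then glue the resulting factorizations together edge-disjointly over $S$. For a fixed $i$, write $T_i=\{a_1,\ldots,a_{s_i}\}$ with $s_i=|T_i|$ odd and $a_1+\cdots+a_{s_i}=0$. I would build the candidate $|T_i\cup(-T_i)|\times s_i$ matrix by stacking two circulant blocks: the top $s_i$ rows are the cyclic shifts of $(a_1,\ldots,a_{s_i})$, and the bottom $s_i$ rows are the cyclic shifts of $(-a_1,\ldots,-a_{s_i})$. Each row then sums to $0$, since both $\sum T_i$ and $\sum(-T_i)$ are zero; in each column the upper block contributes every element of $T_i$ exactly once and the lower block contributes every element of $-T_i$ exactly once, so the column is a permutation of $T_i\cup(-T_i)$. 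Closure of $T_i\cup(-T_i)$ under negation is automatic, so conditions 1--3 of Theorem~\ref{constructionD} are all satisfied; that theorem then yields a $C_m$-factorization of $C_m[T_i\cup(-T_i)]$ for every odd $m\geq s_i$ (the parity condition $m\equiv s_i\pmod 2$ holds because both are odd).

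To conclude, fix any odd $m\geq\max_i|T_i|$ and apply the above construction for each $i\in\{1,\ldots,u\}$. Since the sets $T_1\cup(-T_1),\ldots,T_u\cup(-T_u)$ partition $S$, the graph $C_m[S]$ is the edge-disjoint union of the subgraphs $C_m[T_i\cup(-T_i)]$ on the common vertex set $\Z_m\times\Z_n$; hence the union of the individual $C_m$-factorizations is a $C_m$-factorization of $C_m[S]$, which is what we want.

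The only step that is not entirely automatic is verifying that the stacked circulant matrix really has the required column-permutation structure; aside from this routine check, every hypothesis of the theorem (oddness of the $|T_i|$, zero-sum condition, and $\pm$-symmetric partition) has been tailored to feed directly into Theorem~\ref{constructionD}, so I do not anticipate any genuine obstacle in the argument.
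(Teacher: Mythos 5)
Your proposal is correct and follows essentially the same route as the paper: the paper also forms, for each $i$, the $2|T_i|\times|T_i|$ matrix obtained by stacking the circulant of $T_i$ on top of its negative, verifies conditions 1--3 of Theorem~\ref{constructionD}, and concludes by taking the edge-disjoint union of the resulting factorizations of the $C_m[\pm T_i]$. The column-permutation check you flag is indeed routine and is handled the same way in the paper.
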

\begin{proof}
For any $i=1,2,\ldots,u$, let $\ell_i=|T_i|$ and $T_i=\{t_{i1}, t_{12}, \ldots, t_{i,\ell_i}\}$ and create the matrix 
the $2\ell_i\times\ell_i$ matrix defined as follows:
\[
  A_i=\left[
  \begin{array}{r}
     B_i \\
    -B_i
  \end{array}
  \right], \mbox{ where } 
  B_i=\left[
  \begin{array}{cccccc}
    t_{i1} & t_{i2} & t_{i3}       & \ldots & t_{i,\ell_i-1} & t_{i,\ell_i} \\
    t_{i2} & t_{i3} & t_{i4}       & \ldots & t_{i,\ell_i}   & t_{i1} \\
    \vdots &   \vdots & \vdots     & \ldots & \vdots  & \vdots  \\
    t_{i,\ell_i} & t_{i1} & t_{i2} & \ldots & t_{i,\ell_i-2} & t_{i,\ell_{i}-1} 
  \end{array}
  \right].
\]
In other words, the block $B_i$ is the matrix whose rows are obtained by cyclically permuting the sequence of integers in $T_i$. 
Note that each row of $B_i$, by assumption, sums to $0$ and thus the same property is satisfied by $A_i$. In addition,
$A_i$ is a matrix whose columns are permutations of $T_i \ \cup \ -T_i$. Therefore, $A_i$ satisfies conditions 1, 2, and
3 of Lemma~\ref{constructionD}.  Hence, there exists a $C_m$-factorization of $C_m[\pm T_i]$
for any $m \geq \ell_i$. Since $C_m[S]$ is the edge-disjoint union of the $C_m[\pm T_i]$'s, then the assertion follows.
\end{proof}

\section{Factoring $C_m[n]$}
\label{Section C_m[n]}

In this section we factor $C_m[n]$ into $C_m$- and $C_n$-factors, i.e.\ we give a solution to HW$(C_m[n];m,n;\alpha,\beta)$. 
We first note that a simple counting argument yields the following necessary condition.
\begin{theorem}
Let $n, m\geq 3$ be integers. If a HW$(C_m[n];m,n;\alpha,\beta)$ exists, then $\alpha,\beta \geq 0$ and $\alpha+\beta=n$. 
\end{theorem}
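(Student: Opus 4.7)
The statement is a simple degree-counting observation, so the proof plan is essentially to compute the degree of each vertex in $C_m[n]$ and divide by $2$.

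First I would note that $C_m[n]$ is a regular graph on $mn$ vertices. Indeed, each vertex $x$ of $C_m$ has exactly two neighbours in $C_m$, so by the definition of the lexicographic product each vertex $(x,i) \in V(C_m) \times \Z_n$ is joined to all pairs $(y,j)$ with $y$ a neighbour of $x$ in $C_m$ and $j \in \Z_n$. Thus every vertex of $C_m[n]$ has degree exactly $2n$.

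Next, any 2-factor of a graph is a $2$-regular spanning subgraph, so it uses exactly two edges at every vertex; this holds whether the factor is a $C_m$-factor or a $C_n$-factor. A decomposition of $C_m[n]$ into $\alpha$ $C_m$-factors and $\beta$ $C_n$-factors therefore accounts for $2(\alpha+\beta)$ of the $2n$ edges incident at each vertex, forcing $\alpha + \beta = n$. The constraints $\alpha, \beta \geq 0$ are immediate since these are cardinalities of sets of factors. There is no real obstacle here; the argument is two lines and does not require any of the machinery developed in the preliminaries.
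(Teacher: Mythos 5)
Your proof is correct and is precisely the "simple counting argument" the paper alludes to without writing out: each vertex of $C_m[n]$ has degree $2n$, each $2$-factor consumes two edges at every vertex, so $\alpha+\beta=n$. No issues.
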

To show sufficiency, we consider cases of $\alpha$ modulo 6. We will find the following lemma useful.

\begin{lemma}
\label{n factor}
Let $n\geq m\geq 3$ be odd integers, let $0\leq w \leq \frac{n-1}{2}$, and let $S \subseteq\Z_n$. Then $C_m[\pm S]$ has a factorization into $2|S|$ $C_n$-factors for
\begin{enumerate}
 \item $S=\left[w, \frac{n-1}{2}\right]$, or
 \item $S = 
\{w\}\cup\left[w+2,\frac{n-1}{2}\right]$, except possibly when $n=2w+3$ and $n\equiv 0\equiv w \pmod{3}$.
\end{enumerate} 
\end{lemma}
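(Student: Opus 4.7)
The plan is to partition $\pm S$ into a disjoint union of ``pairs'' $\pm\{d_1,d_2\}$, together with at most one ``singleton'' $\pm\{d\}$, and then apply Lemma~\ref{2 factor Lemma} or Lemma~\ref{1 factor Lemma} respectively to each piece. Each admissible pair produces $4$ $C_n$-factors and each admissible singleton produces $2$ $C_n$-factors, so summing over the partition yields the required $2|S|$ factors.

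Two observations make the pairings easy. First, since $n$ is odd, any two consecutive differences $d,d+1$ satisfy $(d+1)-d=1$, which is coprime to $n$, so Lemma~\ref{2 factor Lemma} applies to any such pair. Second, $\tfrac{n-1}{2}$ is coprime to $n$, because $\gcd\!\bigl(\tfrac{n-1}{2},n\bigr)$ divides $\gcd(n-1,n)=1$, so Lemma~\ref{1 factor Lemma} applies with $d=\tfrac{n-1}{2}$. Similarly, the difference $2$ is coprime to odd $n$, so Lemma~\ref{2 factor Lemma} applies to any pair whose entries differ by $2$.

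For case (1), $S=[w,\tfrac{n-1}{2}]$: if $|S|$ is even, I would pair $S$ as $(w,w+1),(w+2,w+3),\dots,(\tfrac{n-3}{2},\tfrac{n-1}{2})$, and apply Lemma~\ref{2 factor Lemma} to each pair. If $|S|$ is odd, first extract $\tfrac{n-1}{2}$ as a singleton (handled by Lemma~\ref{1 factor Lemma}) and then pair the even-sized remainder $[w,\tfrac{n-3}{2}]$ in the same consecutive manner. For case (2), $S=\{w\}\cup[w+2,\tfrac{n-1}{2}]$, provided $|S|\geq 2$ I would first pair $\{w,w+2\}$ (difference $2$, covered by Lemma~\ref{2 factor Lemma}) and then treat the remaining interval $[w+3,\tfrac{n-1}{2}]$ exactly as in case (1), peeling off $\tfrac{n-1}{2}$ as a singleton if the remaining count is odd. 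A quick tally confirms the factor count is $2|S|$ in every subcase.

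The main obstacle is the degenerate instance of case (2) where $|S|=1$, which forces $w+2>\tfrac{n-1}{2}$, i.e.\ $w=\tfrac{n-3}{2}$ and $n=2w+3$. Then $S=\{w\}$ is a lone singleton, and the whole argument collapses to a single application of Lemma~\ref{1 factor Lemma} with $d=w$, which requires $\gcd(w,n)=1$. A short computation gives $\gcd\!\bigl(\tfrac{n-3}{2},n\bigr)\mid \gcd(n-3,n)=\gcd(3,n)$, with equality $3$ precisely when $3\mid n$ (equivalently $3\mid w$). In that exceptional case the circulant $\langle\{w\}\rangle_n$ decomposes into $3$ short cycles rather than a Hamilton cycle, so neither Lemma~\ref{2 factor Lemma} nor Lemma~\ref{1 factor Lemma} produces the desired two $C_n$-factors covering $\pm w$, which is exactly the exception recorded in the statement.
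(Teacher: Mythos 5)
Your argument is essentially identical to the paper's: consecutive pairs (difference $1$) via Lemma~\ref{2 factor Lemma}, the leftover $\frac{n-1}{2}$ as a singleton via Lemma~\ref{1 factor Lemma}, the initial pair $(w,w+2)$ of difference $2$ in case (2), and the same $\gcd$ analysis isolating the stated exception. The only slip is that $|S|=1$ in case (2) also occurs when $w=\frac{n-1}{2}$ (i.e.\ $n=2w+1$), a subcase the paper treats explicitly; it is immediately handled by your own observation that $\frac{n-1}{2}$ is coprime to $n$, so the proof stands.
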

\begin{proof}
When $S = \left[w, \frac{n-1}{2}\right]$ take consecutive pairs of integers from $S$, which have difference 1, and apply Theorem~\ref{2 factor Lemma} to generate four $C_n$-factors for each pair. If $|S|$ is odd, there is a leftover difference, $\frac{n-1}{2}$, which is coprime to $n$, so we may use Theorem~\ref{1 factor Lemma} to create two more $C_n$-factors. When $S=\{w\} \ \cup \ \left[w+2,\frac{n-1}{2}\right]$ and $n>2w+3$ we proceed as above, except that the first pair is $(w,w+2)$, which has difference $2$, which is coprime to $n$ since $n$ is odd. When $n=2w+1$ or $2w+3$, then $S=\{w\}$ and our assumptions ensure that $w$ is coprime to $n$. We can then apply Theorem~\ref{1 factor Lemma} to create two  $C_n$-factors on $S$.
\end{proof}

Next we give a negative result which generalises a result of Odabasi and Ozkan in \cite{OO}. This result shows that in general the methods used here will not work for $\alpha=n-1$.

\begin{theorem}
\label{beta=1}
Suppose that $m$ is an odd integer and $n$ is not a multiple of $m$.  Then $(n-1,1)\not\in {\rm HWP}(C_m[n]; m,n)$.
\end{theorem}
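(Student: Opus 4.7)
The plan is to exploit the parity constraint imposed by $m$ being odd, together with a matching decomposition of the bipartite parts of $C_m[n]$. For $i \in \Z_m$, let $B_i$ denote the copy of $K_{n,n}$ between the fibers $\{i\} \times \Z_n$ and $\{i+1\} \times \Z_n$.

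First I would show that every $m$-cycle in $C_m[n]$ projects to a Hamiltonian cycle of $C_m$ traversed in a single direction. Given an $m$-cycle $(v_0, v_1, \ldots, v_{m-1})$ with $v_k = (x_k, y_k)$, consecutive $x$-coordinates differ by $\pm 1$ in $\Z_m$. If $a$ and $b$ are the numbers of $+1$ and $-1$ steps respectively, then $a + b = m$ and $a - b \equiv 0 \pmod{m}$. Since $m$ is odd, $a - b$ is odd, and the only odd multiples of $m$ with $|a-b|\le m$ are $\pm m$. Hence each $m$-cycle uses exactly one edge from each $B_i$, and so each $C_m$-factor restricts to a perfect matching on every $B_i$.

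Next I would exploit the matching-count on each $B_i$. Since $B_i \cong K_{n,n}$ is $n$-regular with $n^2$ edges, the $n-1$ matchings supplied by the $C_m$-factors account for $(n-1)n$ edges and degree $n-1$ at every vertex of $B_i$. What remains in $B_i$ is therefore a $1$-regular spanning bipartite subgraph, i.e.\ a perfect matching, and this is what the single $C_n$-factor contributes to $B_i$. In particular, each vertex of $C_m[n]$ has, under the $C_n$-factor, exactly one neighbour in each of its two adjacent fibers.

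Finally I would trace a cycle of the $C_n$-factor to derive $m \mid n$. Starting at $(x_0, y_0)$ and stepping first to the unique $C_n$-factor neighbour in fiber $x_0 + 1$, arrival at that vertex consumes its edge to fiber $x_0$, so the traversal must continue to the unique neighbour in fiber $x_0 + 2$. Inductively, the $x$-coordinate strictly increases by $1$ at each step (and strictly decreases by $1$ if the opposite initial direction is chosen), so the cycle cannot close until the number of steps is a multiple of $m$. Hence every cycle of the $C_n$-factor has length divisible by $m$. Since each such cycle has length $n$, this forces $m \mid n$, contradicting the hypothesis. I do not foresee a serious obstacle here; once the matching decomposition of the $B_i$'s is in place, the monotone-projection argument is immediate and yields the result directly.
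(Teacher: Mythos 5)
Your argument is correct and follows essentially the same route as the paper's proof: the parity of $m$ forces each $C_m$-factor to contribute exactly one edge between consecutive fibers at every vertex, so the leftover $C_n$-factor gives each vertex one neighbour in each adjacent fiber, its cycles must wind monotonically around the $m$ parts, and hence have length divisible by $m$, contradicting $m\nmid n$. Your write-up merely makes explicit the parity count for the $m$-cycles and the matching count in each $K_{n,n}$ layer, which the paper states more briefly.
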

\begin{proof}
Suppose that such a factorization existed. Since $m$ is odd, the cycles of the $n-1$ $C_m$-factors must use exactly one point from each part of the $C_m[n]$. 
Thus, after removing the edges contained in the $C_m$-factors, for each point $x_i$, there are two remaining edges incident with $x_i$ which are of the form $((x-1)_j, x_i,)$ and $(x_i,(x+1)_{j'})$. This means that each cycle of the $C_n$-factor cannot reverse direction and must wind around the $m$ parts, i.e.\ it must have the form $(0_{i_1}, 1_{i_2}, 2_{i_3}, \ldots, (m-1)_{i_m}, 0_{i_{m+1}}, 1_{i_{m+2}}, \ldots, (m-1)_{i_{2m}}, \ldots)$, but since $m\nmid n$ this is impossible.
\end{proof}

We now consider the cases when $\alpha=0$ and $\alpha=n$. 
Piotrowski \cite{Piotrowski} has shown the following result for $m \geq 4$. The case $m=3$ is covered by Theorem~\ref{Liu}.
\begin{theorem}
\label{C_m || C_m[n]}
There exists a $C_m$-factorization of $C_m[n]$, i.e. $(n,0) \in {\rm HWP}(C_m[n]; m,n)$, except when $n=2$ and $m$ is odd or $(m,n) =(3,6)$.
\end{theorem}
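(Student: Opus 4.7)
The plan is to handle the cases $m=3$ and $m \geq 4$ separately, appealing in each case to an existing result.

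For $m=3$, I would observe that $C_3 = K_3$, so $C_3[n]$ coincides with the complete tripartite graph $K_3[n]$, and apply Theorem~\ref{Liu} with $t=3$. The divisibility requirement $m \mid tn$ becomes $3 \mid 3n$ and is automatic; $(t-1)n = 2n$ is always even; and the side-condition ``$m$ even when $t = 2$'' is vacuous. The only triples in Liu's exception list having $t = m = 3$ are $(3,3,2)$ and $(3,3,6)$, and these correspond precisely to the exclusions $n=2$ (with $m$ odd) and $(m,n)=(3,6)$ in the statement to be proved.

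For $m \geq 4$, I would simply invoke Piotrowski's direct construction, which produces the required $C_m$-factorization of $C_m[n]$ in every case apart from the single exception $n=2$ with $m$ odd. Together with the previous paragraph, this exhausts all values of $m \geq 3$ and yields the stated result.

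The main obstacle is therefore concentrated in Piotrowski's construction for $m \geq 4$, which we are treating as a black box. If one wanted a more self-contained argument at least when both $m$ and $n$ are odd, a natural strategy would be to peel off one $C_m$-factor using the difference $0$, namely $\{(0_x, 1_x, \ldots, (m-1)_x) \mid x \in \Z_n\}$, and then apply Theorem~\ref{corD} to a partition of $\Z_n \setminus \{0\}$ into $\pm$-symmetric odd-sized blocks summing to zero; such partitions can be supplied by the Skolem and Langford sequences of Theorems~\ref{Skolem} and~\ref{lang}. However, the bound $m \geq \max_i |T_i|$ in Theorem~\ref{corD} is very restrictive when $m = 3$, and this approach touches neither even $m$ nor even $n$, so appealing to Piotrowski is by far the cleanest route.
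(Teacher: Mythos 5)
Your proposal matches the paper's own justification exactly: the paper likewise attributes the case $m\geq 4$ to Piotrowski's construction and handles $m=3$ via Theorem~\ref{Liu} (using $C_3[n]=K_3[n]$ with $t=3$, whose exceptions $(3,3,2)$ and $(3,3,6)$ give precisely the stated exclusions). Your verification of the divisibility and parity conditions in Liu's theorem is correct, so nothing further is needed.
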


\begin{theorem}
\label{C_n || C_m[n]}
If $n\geq m\geq 3$ are odd integers then there exists a $C_n$-factorization of $C_m[n]$, i.e.\ $(0,n)\in {\rm HWP}(C_m[n]; m,n)$.
\end{theorem}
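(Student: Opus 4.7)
The plan is to realise $C_m[n]$ as the Cayley-type graph $C_m[\Z_n]$ and split the difference set $\Z_n$ into two blocks, each handled by a tool already in place. Since $n$ is odd, we may write
\[
\Z_n \;=\; \pm\{0,1,2\}\ \cup\ \pm\bigl[3,\tfrac{n-1}{2}\bigr],
\]
and a degree count shows that exactly $n$ edge-disjoint $C_n$-factors are required to decompose $C_m[n]$, which will match the $5+(n-5)=n$ factors that the two tools produce.

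Assuming $n\ge 7$, I would first apply Theorem~\ref{5 classes} with $b=1$ (clearly coprime to $n$) to obtain five $C_n$-factors of $C_m[n]$ that exactly cover $C_m[\pm\{0,1,2\}]$. I would then apply Part~1 of Lemma~\ref{n factor} with $w=3$ (valid since $3\le (n-1)/2$ once $n\ge 7$) to obtain $2|S|=2(\tfrac{n-1}{2}-2)=n-5$ further $C_n$-factors exactly covering $C_m\bigl[\pm[3,\tfrac{n-1}{2}]\bigr]$. Since the two difference sets partition $\Z_n$, the union of these $n$ factors is a $C_n$-factorization of $C_m[n]$, as required.

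The small cases $n\in\{3,5\}$ are immediate. When $n=5$ we have $\pm\{0,1,2\}=\Z_5$, so the five $C_n$-factors of Theorem~\ref{5 classes} alone form a factorization of $C_m[5]$. When $n=3$ we must have $m=3$, so a $C_n$-factor is a $C_m$-factor, and the existence follows from Theorem~\ref{C_m || C_m[n]}, the pair $(m,n)=(3,3)$ not being among its exceptions. There is no substantive obstacle: the heavy lifting is already packaged in Theorem~\ref{5 classes} and Lemma~\ref{n factor}, and the proof amounts to the bookkeeping check that their outputs exactly partition $\Z_n$ and the arithmetic $5+(n-5)=n$, together with the observation that the small cases $n=3,5$, where Lemma~\ref{n factor} does not directly apply, are handled by prior results.
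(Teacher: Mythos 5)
Your proposal is correct and follows essentially the same route as the paper's proof: Theorem~\ref{5 classes} with $b=1$ for $C_m[\pm\{0,1,2\}]$, Lemma~\ref{n factor} for $C_m\bigl[\pm[3,\tfrac{n-1}{2}]\bigr]$, and Theorem~\ref{C_m || C_m[n]} for $n=m=3$. Your explicit separate treatment of $n=5$ is just a slightly more careful bookkeeping of a case the paper absorbs into the general argument (where the interval $[3,\tfrac{n-1}{2}]$ is empty).
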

\begin{proof}
If $n=m=3$, this is Theorem~\ref{C_m || C_m[n]}. If $n\geq 5$, use the five classes from Theorem~\ref{5 classes} to cover $C_m[\pm\{0,1,2\}]$. 
Use Lemma~\ref{n factor} to obtain a $C_n$-factorization of $C_m\left[\pm\left[3,\frac{n-1}{2}\right]\right]$ and the result follows.
\end{proof} 

\subsection{Cases $\alpha \equiv 0, 5 \pmod{6}$}

We first consider the case when $\alpha \equiv 0 \pmod{6}$. 

\begin{lemma}
If $m$ and $n$ are odd integers with $n\geq m\geq 3$, then $(\alpha, n-\alpha)\in {\rm HWP}(C_m[n]; m, n)$ for every $\alpha\equiv 0\pmod{6}$ with $0\leq \alpha \leq n-5$, except possibly when $(m,n,\alpha) = (3,11,6)$.
\end{lemma}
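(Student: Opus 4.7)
The case $\alpha=0$ is handled directly by Theorem~\ref{C_n || C_m[n]}, so assume $\alpha=6k$ with $k\geq 1$. My plan is to split the mixed differences of $C_m[n]$, namely $\pm[0,(n-1)/2]$, into two sets. The first set will be $\pm T_1\cup\cdots\cup\pm T_k$ where $T_1,\ldots,T_k$ are $k$ disjoint zero-sum triples in $\mathbb Z_n\setminus\{0\}$; applying Theorem~\ref{corD} then produces the required $6k$ $C_m$-factors. The second set, always containing $0$, will be covered by $n-6k$ $C_n$-factors built from Theorem~\ref{5 classes} (on some triple $\pm\{0,b,2b\}$) together with Lemma~\ref{n factor}, Lemma~\ref{2 factor Lemma}, and where needed Lemma~\ref{1 factor Lemma}. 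The counting of Theorem~\ref{corD} forces $2\sum|T_i|=6k$, i.e.\ $\sum|T_i|=3k$, which with odd-size summands means the $T_i$ are forced to be triples.

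For $k\geq 5$, Theorem~\ref{lang} supplies a Langford sequence of order $k$ and defect $3$ (ordinary when $k\equiv 0,1\pmod 4$, hooked otherwise); its induced triples are zero-sum, with absolute values partitioning $[3,3k+2]$ in the ordinary case and $[3,3k+3]\setminus\{3k+2\}$ in the hooked case. Since $\alpha\leq n-5$ gives $3k+2\leq(n-1)/2$, the triples fit inside $[1,(n-1)/2]$ in the ordinary case and Theorem~\ref{corD} yields the $6k$ required $C_m$-factors. I would then apply Theorem~\ref{5 classes} with $b=1$ to cover $\pm\{0,1,2\}$ with $5$ $C_n$-factors, and Lemma~\ref{n factor} (case~1 or case~2 respectively) to cover the remaining suffix $[3k+3,(n-1)/2]$ or $\{3k+2\}\cup[3k+4,(n-1)/2]$ with $n-6k-5$ further $C_n$-factors, totalling $n-6k=\beta$. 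The exceptional clause in Lemma~\ref{n factor}~(2) does not arise because in the hooked case the boundary forces $n=6k+7\not\equiv 0\pmod 3$. The hooked sub-case at the exact boundary $n=6k+5$ (where $3k+3>(n-1)/2$) needs a minor patch: swap the top Langford triple for an explicit replacement in $[1,(n-1)/2]$.

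For small $k\in\{1,2,3,4\}$ no Langford sequence of defect $3$ exists and I substitute explicit triples. For $k=1$ and $n\geq 15$, the triple $T=\{3,4,-7\}$ works: after Theorem~\ref{5 classes} with $b=1$ covers $\pm\{0,1,2\}$, the residual $\{5,6\}\cup[8,(n-1)/2]$ is handled by Lemma~\ref{2 factor Lemma} applied to the pair $\{5,6\}$ (whose difference $1$ is coprime to $n$) and Lemma~\ref{n factor} on the suffix, giving $5+4+(n-15)=n-6$ $C_n$-factors. For $n=13$, the triple $\{3,4,6\}$ (since $3+4+6=13\equiv 0\pmod{13}$) leaves the single residual difference $5$, which is coprime to $13$ and handled by Lemma~\ref{1 factor Lemma}. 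Analogous explicit triples, sometimes paired with a choice of $b\neq 1$ in Theorem~\ref{5 classes} to avoid collisions with the chosen triples, treat $k\in\{2,3,4\}$ for every admissible $n$.

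The obstacle is the case $(m,n,\alpha)=(3,11,6)$. Enumerating the zero-sum triples in $\mathbb Z_{11}$ with entries of absolute value at most $5$, namely $\{1,2,-3\},\{1,3,-4\},\{1,4,-5\},\{2,3,-5\},\{2,4,5\}$, and the available sets $\{0,b,2b\}$ with $b$ coprime to $11$, namely $\{0,1,2\},\{0,2,4\},\{0,3,5\},\{0,3,4\},\{0,1,5\}$, one checks directly that no triple is complemented by a valid $\{0,b,2b\}$ set. Since $\alpha=6$ combined with $m=3$ forces Theorem~\ref{corD} to rely on exactly one triple and no other tool of Section~\ref{Section prelim} covers the difference $0$ in five $C_n$-factors on three nonzero differences, this case must be listed as a possible exception.
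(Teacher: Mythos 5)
Your overall strategy is the same as the paper's: zero-sum triples induced by (hooked) Langford sequences of defect $3$ feed Theorem~\ref{corD} to produce the $C_m$-factors, while Theorem~\ref{5 classes} with $b=1$ together with Lemmas~\ref{n factor}, \ref{2 factor Lemma} and \ref{1 factor Lemma} produce the $C_n$-factors; your $k=1$ triples $\{3,4,-7\}$ and $\{3,4,6\}$ are exactly the ones the paper uses. However, there is a genuine gap at $(n,\alpha)=(11,6)$ with $m\geq 5$. Your enumeration correctly shows that no zero-sum signed triple on $\pm\{3,4,5\}$ exists in $\Z_{11}$, so Theorem~\ref{corD} cannot be applied --- but that obstruction has nothing to do with $m$, and your construction for $k=1$ only starts at $n=13$. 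Consequently your proof establishes nothing for $(m,11,6)$ with $m\in\{5,7,9,11\}$, cases the lemma claims to cover (only $(3,11,6)$ is excepted). The paper closes these by returning to Theorem~\ref{constructionD} directly with a $6\times 5$ matrix $A=\left[\begin{smallmatrix} B\\ -B\end{smallmatrix}\right]$ whose columns are permutations of $\pm\{3,4,5\}$ and whose rows sum to $0$ in $\Z_{11}$ (rows of $B$: $(3,4,3,-4,5)$, $(4,3,-4,5,3)$, $(5,5,5,3,4)$); since $\ell=5$ this yields six $C_m$-factors of $C_m[\pm\{3,4,5\}]$ for every odd $m\geq 5$. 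Without some such construction your argument only proves a strictly weaker lemma in which $(m,11,6)$ is excepted for all $m$.

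Two smaller points. First, for $k\in\{2,3,4\}$ you assert that ``analogous explicit triples'' handle every admissible $n$ but exhibit none; since no Langford sequence of defect $3$ and order $k\leq 4$ exists, these cases genuinely require a small case analysis near the boundary (the paper's table uses, e.g., $\{4,6,7\},\{3,5,-8\}$ for $n=17$ but $\{4,6,-10\},\{3,5,-8\}$ for $n\geq 21$), and this needs to be written out. Second, the ``minor patch'' you flag in the hooked case at $n=6k+5$ is in fact unnecessary: there $3k+3\equiv -(3k+2)\pmod{n}$, so the hooked triples already cover exactly $\pm[3,(n-1)/2]$ and the construction goes through unchanged.
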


\begin{proof}
If $\alpha=0$, then the result is Theorem~\ref{C_n || C_m[n]}, so we need only consider the case that $0<\alpha \leq n-5$.

First we suppose that $\alpha >24$.
When $\alpha \equiv 0,6 \pmod{24}$ there exists a Langford sequence of order $\alpha/6$ and defect 3 whenever $\alpha \geq 30$. The triples induced by this sequence, together with their negatives, satisfy Theorem~\ref{corD} and so provide $\alpha$ $C_m$-factors covering $C_m\left[\pm \left[3,\frac{\alpha}{2}+2\right]\right]$. Similarly, when $\alpha \equiv 12,18 \pmod{24}$ we use a hooked Langford sequence of order $\alpha/6$ and defect $3$, which exists whenever $\alpha\geq 30$, to create triples which together with their negatives satisfy Theorem~\ref{corD} and so provide $\alpha$ $C_m$-factors covering $C_m\left[\pm \left[3,\frac{\alpha}{2}+3\right]\setminus \{\pm(\frac{\alpha}{2}+2)\}\right]$.
Take $b=1$ in Theorem~\ref{5 classes}, to get five $C_n$-factors on $C_m[\pm\{0,1,2\}]$.   If $\alpha<n-5$, we now factor 
$C_m\left[\pm \left[\frac{\alpha}{2}+3,\frac{n-1}{2}\right]\right]$ or  
$C_m\left[\left\{\pm \left(\frac{\alpha}{2}+2\right)\right\} \cup \pm \left[\frac{\alpha}{2}+4,\frac{n-1}{2}\right] \right]$ as appropriate into $C_n$-factors by Lemma~\ref{n factor}  with $w=\frac{\alpha}{2}+3$ or $\frac{\alpha}{2}+2$, respectively.

For $\alpha = 6,12,18,24$, the table below gives triples which together with their negatives satisfy Theorem~\ref{corD} and a partition of the remaining integers in $[1,s]$ which satisfy Theorem~\ref{2 factor Lemma}.
The remaining $C_n$-factors on $[s+1,\frac{n-1}{2}]$ can be obtained by Lemma~\ref{n factor}.
\[
\begin{array}{|c|l|c|c|l|} \hline
\alpha & \text{Triples} & n & s & \text{Partition} \\ \hline
6  & \{3,4,6\}  & 13     & 6 & (5) \;\;   \\ \hline
6  & \{3,4,-7\}  & \geq15 & 7 & (5, 6)   \\ \hline
12 & \{4,6,7\}, \{3,5,-8\} & 17     & 8 & \emptyset  \\ \hline
12 & \{4,6,9\}, \{3,5,-8\} & 19     & 9 & (7)  \\ \hline
12 & \{4,6,-10\}, \{3,5,-8\} & \geq21 & 10 & (7, 9)  \\ \hline
18 & \{3,6,-9\}, \{4,7,-11\}, \{5,8,10\} & 23     & 11 & \emptyset  \\ \hline
18 & \{3,10,12\}, \{4,5,-9\}, \{6,8,11\} & 25     & 12 & (7) \\ \hline
18 & \{3,6,-9\}, \{4,7,-11\}, \{5,8,-13\} & \geq27 & 13 & (10, 12)  \\ \hline
24 & \{4,5,-9\}, \{6,11,12\}, \{7,8,14\}, \{3,10,-13\} & 29     & 14 & \emptyset\\ \hline
24 & \{4,5,-9\}, \{6,11,14\}, \{7,8,-15\}, \{3,10,-13\} & 31     & 15 & (12)\\ \hline
24 & \{4,5,-9\}, \{6,11,16\}, \{7,8,-15\}, \{3,10,-13\} & 33 & 16 & (12, 14)\\ \hline
24 & \{4,5,-9\}, \{6,11,-17\}, \{7,8,-15\}, \{3,10,-13\} & \geq33 & 17 & (12, 14, 16)\\ \hline
\end{array}
\]

When $n=11$, so necessarily $\alpha = 6$, and $m>3$, we take $b=1$ in Theorem~\ref{5 classes} to create five $C_n$-factors and create the $C_m$-factors by using matrix of the form $A=\left[
  \begin{array}{r}
     B \\
    -B
  \end{array}
  \right]$
in Theorem~\ref{constructionD}, where $B$ is given below.
\[
  B=\left[
  \begin{array}{rrrrr}
    3 &  4 &  3 & -4 & 5 \\
    4 &  3 & -4 &  5 & 3 \\ 
    5 &  5 &  5 &  3 & 4
  \end{array}
  \right].
\]
\end{proof}

\begin{lemma}
If $n\geq m\geq 3$ are odd integers, then $(\alpha, n-\alpha)\in {\rm HWP}(C_m[n]; m, n)$ for every $\alpha\equiv 5\pmod{6}$ with $0\leq \alpha \leq n$.
\end{lemma}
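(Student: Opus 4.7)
The plan is to mirror the construction of the preceding lemma for $\alpha \equiv 0 \pmod{6}$, but to build the ``extra five'' factors as $C_m$-factors directly instead of producing five extra $C_n$-factors via Theorem~\ref{5 classes}. Write $\alpha = 6\nu+5$ with $\nu \geq 0$. Since $\alpha \leq n$, the boundary case $\alpha = n$ forces $n \equiv 5 \pmod 6$ and $\beta = 0$, and then $(n,0) \in \mathrm{HWP}(C_m[n]; m, n)$ is immediate from Theorem~\ref{C_m || C_m[n]}; I may therefore assume $\alpha < n$.

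The central new ingredient is that for every odd $m \geq 3$ and every odd $n \geq 5$, $C_m[\pm\{0,1,2\}]$ admits a factorization into exactly five $C_m$-factors. To obtain this, I would apply Theorem~\ref{constructionD} with $\ell = 3$, $T = \{-2, -1, 0, 1, 2\}$, and the $5 \times 3$ matrix
\[
A = \left[\begin{array}{rrr} -2 & 1 & 1 \\ -1 & 2 & -1 \\ 0 & -2 & 2 \\ 1 & -1 & 0 \\ 2 & 0 & -2 \end{array}\right],
\]
whose rows sum to zero and whose columns are each a permutation of $T$. Since $T = -T$ and $m \equiv 3 \pmod 2$, the second clause of Theorem~\ref{constructionD} immediately extends the resulting $C_3$-factorization of $C_3[T]$ to a $C_m$-factorization of $C_m[T]$ for every odd $m \geq 3$, producing five $C_m$-factors.

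With this five-factor bundle as a uniform building block, the remaining argument runs parallel to the preceding lemma. When $\nu = 0$ (i.e.\ $\alpha = 5$), the five factors above cover $\pm\{0,1,2\}$ and Lemma~\ref{n factor} with $w=3$ contributes the required $n - 5$ $C_n$-factors covering $\pm[3, (n-1)/2]$. When $\nu \geq 5$, Theorem~\ref{lang} supplies a Langford (or hooked Langford) sequence of order $\nu$ and defect $3$, whose induced zero-sum triples, together with their negatives, satisfy Theorem~\ref{corD} and yield $6\nu$ further $C_m$-factors covering $\pm[3, 3\nu+2]$ (or $\pm[3, 3\nu+3]\setminus\{\pm(3\nu+2)\}$ in the hooked case); the remaining range of higher differences is converted into $n - \alpha$ $C_n$-factors by Lemma~\ref{n factor}. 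Combining the Langford-induced $6\nu$ factors, the universal $5$, and the $n-\alpha$ $C_n$-factors exactly yields the required $\alpha + \beta = n$ factors.

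The main obstacle is the handful of small values $\nu \in \{1,2,3,4\}$, i.e.\ $\alpha \in \{11, 17, 23, 29\}$, where a defect-$3$ Langford sequence does not exist, together with the small $n$ for which two differences in a prospective triple collapse modulo $n$ (for instance $-7 \equiv 6 \pmod{13}$ forces $\{3, 4, -7\}$ to be rewritten as $\{3, 4, 6\}$). These boundary cases should be dealt with by an explicit table of zero-sum triples exactly in the style of the preceding lemma: for each $\alpha \in \{11, 17, 23, 29\}$ and each small $n$, one lists triples whose differences sit in $[3, (n-1)/2]$ of $\Z_n$ and are disjoint from $\{0,1,2\}$, then mops up any leftover pair or singleton of differences by Lemma~\ref{2 factor Lemma} or Lemma~\ref{1 factor Lemma}. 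Because $\alpha$ is odd, $\beta = n - \alpha$ is even and the leftover differences naturally pair up, so no parity obstruction arises.
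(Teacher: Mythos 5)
Your proposal is correct and is essentially the paper's own argument: the key step in both is to replace the five $C_n$-factors on $C_m[\pm\{0,1,2\}]$ used in the $\alpha\equiv 0\pmod 6$ case with five $C_m$-factors obtained from Theorem~\ref{constructionD} via a $5\times 3$ zero-row-sum matrix whose columns permute $\{0,\pm 1,\pm 2\}$ (your matrix differs from the paper's but satisfies all three conditions, including closure under negation, so it extends to every odd $m\geq 3$). The paper packages this more economically by simply citing the preceding lemma for $(\alpha-5,n-\alpha+5)$ and swapping the five factors, which makes your deferred small-case tables for $\alpha\in\{11,17,23,29\}$ unnecessary -- they are exactly the tables already present in that lemma.
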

\begin{proof} 
If $n=\alpha=11$, then the result follows by Theorem~\ref{C_m || C_m[n]}. 
In the other cases, we have that $(\alpha-5, n-\alpha+5)\in HWP(C_m[n]; m,n)$ by the previous Lemma.
The proof of that lemma makes use of 5 $C_n$-factors constructed on $C_m[\pm\{0,1,2\}]$. We can replace them 
with 5 $C_m$-factors we obtain by Theorem~\ref{constructionD} through the matrix $A$ given below:
\[
A=
\left[
  \begin{array}{rrr}
    0  &   1  &  -1  \\
    1  &  -2  &   1  \\ 
   -1  &  -1  &   2  \\
   2   &   0  &  -2  \\
  -2   &   2  &   0  \\
  \end{array}
  \right].
\]
It then follows that $(\alpha, n-\alpha)\in HWP(C_m[n]; m,n)$.
\end{proof}

\subsection{Cases $\alpha \equiv 1, 3 \pmod{6}$}
\
\begin{lemma}
If $m$ and $n$ are odd integers with $n\geq m\geq 3$, then $(\alpha, n-\alpha)\in {\rm HWP}(C_m[n]; m, n)$ for every $\alpha\equiv 1\pmod{6}$ with $0\leq \alpha \leq n$.
\end{lemma}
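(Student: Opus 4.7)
The overall strategy mirrors the proof of the $\alpha \equiv 5 \pmod 6$ lemma: for $\alpha = \alpha' + 1$ with $\alpha' \equiv 0 \pmod 6$, we reuse the construction of the $\alpha \equiv 0 \pmod 6$ lemma but perform a substitution. Specifically, the five $C_n$-factors on $C_m[\pm\{0,1,2\}]$ produced by Theorem~\ref{5 classes} (with $b=1$) will be replaced by one $C_m$-factor on $C_m[\{0\}]$ (the trivial one, consisting of $n$ parallel copies of the $m$-cycle) together with four $C_n$-factors on $C_m[\pm\{1,2\}]$ supplied by Lemma~\ref{2 factor Lemma}, applicable because $\gcd(1,n)=1$. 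This substitution converts $\alpha'$ $C_m$-factors and $5$ $C_n$-factors into $\alpha'+1 = \alpha$ $C_m$-factors and $4$ $C_n$-factors, while the $C_n$-factors on the larger differences coming from Lemma~\ref{n factor} are unchanged. The construction thus yields the required factorization whenever $7 \le \alpha \le n-4$.

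The remaining values of $\alpha$ are boundary cases. For $\alpha = 1$, take the trivial $C_m$-factor on difference $0$ and cover $\pm[1,(n-1)/2]$ with $n-1$ $C_n$-factors via Lemma~\ref{n factor}(1) with $w=1$. For $\alpha = n$, which can only occur when $n \equiv 1 \pmod 6$, apply Theorem~\ref{C_m || C_m[n]}. The interesting boundary case is $\alpha = n-2$, which occurs exactly when $n \equiv 3 \pmod 6$: here the plan is to use Lemma~\ref{1 factor Lemma} on $\pm\{1\}$ to obtain two $C_n$-factors, use the trivial $C_m$-factor on $\{0\}$, and $C_m$-factorize $C_m[\pm[2,(n-1)/2]]$ via Theorem~\ref{corD}. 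The required triples summing to $0$ modulo $n$ and partitioning $\pm[2,(n-1)/2]$ will come from a Langford or hooked Langford sequence of order $(n-3)/6$ and defect $2$, whose existence for $n \ge 21$ is guaranteed by Theorem~\ref{lang} in all four admissible residues of $n$ modulo $24$ (standard for $n \equiv 3, 21 \pmod{24}$, hooked for $n \equiv 9, 15 \pmod{24}$); the small values $n = 9$ and $n = 15$ admit explicit triples such as $\{2,3,4\}$ for $n=9$ and $\{3,5,7\}, \{2,4,-6\}$ for $n=15$.

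One final special case arises from the single exception $(m,n,\alpha') = (3,11,6)$ in the $\alpha \equiv 0 \pmod 6$ lemma, which blocks the substitution route for $(m,n,\alpha) = (3,11,7)$. This I would handle directly: take the trivial $C_3$-factor on difference $0$, apply Theorem~\ref{corD} with the triple $\{2,4,5\}$ (which sums to $11 \equiv 0 \pmod{11}$) to obtain six $C_3$-factors on $\pm\{2,4,5\}$, and use Lemma~\ref{2 factor Lemma} on $\pm\{1,3\}$ for the four $C_{11}$-factors. The main obstacle is really the $\alpha = n-2$ boundary case with $n \equiv 3 \pmod 6$: one must verify that the relevant Langford sequences exist uniformly for $n \ge 21$ across the four residue classes modulo $24$, and dispose of the small values $n=9,15$ by hand. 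Everything else is a direct adaptation of the two preceding lemmas.
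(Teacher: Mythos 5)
Your proof is correct, but it reaches the result by a genuinely different route from the paper's. For the main range $7\le\alpha\le n-4$ you reduce to the $\alpha\equiv 0\pmod 6$ lemma by swapping its five $C_n$-factors on $C_m[\{0,\pm1,\pm2\}]$ for the trivial difference-$0$ $C_m$-factor together with four $C_n$-factors on $C_m[\pm\{1,2\}]$ from Lemma~\ref{2 factor Lemma}; this is exactly the device the paper itself uses for $\alpha\equiv 5\pmod 6$, and it is sound here because every branch of the $\alpha\equiv 0$ construction (for $n\ge 5$) does place those five $C_n$-factors on $\{0,\pm1,\pm2\}$. The paper instead argues directly: it starts from the same trivial difference-$0$ factor but obtains the remaining $\alpha-1=6u$ $C_m$-factors from a (hooked) Skolem sequence of order $u$ on $\pm[1,3u]$ via Theorem~\ref{corD}, leaving the tail of differences for Lemma~\ref{n factor}. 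The direct construction is self-contained and never encounters the $(3,11,6)$ exception, which your reduction inherits and must patch at $(3,11,7)$; your patch (the triple $\{2,4,5\}$ summing to $11$ together with the pair $\{1,3\}$) is valid. In the boundary case $\alpha=n-2$, $n\equiv 3\pmod 6$, the two arguments essentially coincide: both rest on Langford or hooked Langford sequences of defect $2$ (the paper keeps the Skolem set $\pm[1,3u]$ and uses the difference $(n-1)/2$ when $u\equiv 0,1\pmod 4$, switching to defect $2$ only for $u\equiv 2,3\pmod 4$, whereas you use defect $2$ uniformly and reserve difference $1$ for the two $C_n$-factors); your residue bookkeeping modulo $24$ and the hand-checked cases $n=9,15$ are correct. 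One small point in your favour: you treat $\alpha=n$ (possible when $n\equiv 1\pmod 6$) explicitly via Theorem~\ref{C_m || C_m[n]}, whereas the paper's proof only splits into $\alpha<n-2$ and $\alpha=n-2$ and leaves that case implicit.
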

\begin{proof}
We proceed by defining a set $S \subseteq \mathbb{Z}_n\setminus\{0\}$ which can be covered by a set of triples from a Skolem or Langford sequence, and apply Theorem~\ref{corD} to get $C_m$-factors of $C[S]$. We then define $S^*$ to be the complement of $S$ in $\Z_n\setminus \{0\}$ and partition $C[S^*]$ into $C_n$-factors using Lemmas~\ref{2 factor Lemma} and \ref{1 factor Lemma}. 
We note that the $C_m$-factor, $F$, given below exactly covers $C_m[\{0\}]$, so we only need to find $\alpha-1$ $C_m$-factors.
\[
F = \{(0_i, 1_i, \ldots, (m-2)_i, (m-1)_i)\mid 0\leq i\leq n\}.
\] 
We first deal with the case where $\alpha<n-2$. 

When $u\equiv 0,1\pmod{4}$ we use a Skolem sequence of order $u$, which exists by Theorem~\ref{Skolem}, to partition the set $S= \pm[1, 3u]$ into triples $\{t_1, t_2, t_3\}$ such that $t_1+t_2+t_3=0$.
When $u\equiv 2,3\pmod{4}$ we use a hooked Skolem sequence of order $u$, which exists by Theorem~\ref{Skolem}, to partition the set $S= \pm[1, 3u-1]\cup \pm\{3u+1\}$ into triples $t_1, t_2, t_3$ such that $t_1+t_2+t_3=0$. In each case, there is a $C_m$-factorization of $C_m[S]$ with $\alpha-1$ factors by Theorem~\ref{corD}.
The complement of $S$ in $\Z_n\setminus \{0\}$ satisfies the conditions of Lemma~\ref{n factor} and so can be factored into $C_n$-factors.

We now consider the case when $\alpha=n-2$. Note that in this case, $n = 6u+3 \equiv 3 \pmod{6}$.

If $u\equiv 0,1\pmod{4}$, use $S= \pm[1, 3u]$ as above and note that $3u+1= \frac{n-1}{2}$.  We apply Lemma~\ref{1 factor Lemma} to create two $C_n$-factors using this difference. 

If $u\equiv 3\pmod{4}$ we instead partition $S=\pm[2,3u+1]$. Note that in this case $u \geq 3$, so a Langford sequence of order $u$ and defect $2$ exists by Theorem~\ref{lang} and we may use Theorem~\ref{corD} to create the $C_m$-factors. The remaining difference is 1 and we apply Lemma~\ref{1 factor Lemma} to create 2 $C_n$-factors using this difference.

If $u\equiv 2\pmod{4}$ we instead partition $S=\pm[2, 3u]\cup \pm\{3u+2\}$ using a hooked Langford sequence of order $u$ and defect $2$, which exists when $u\geq 2$ by Theorem~\ref{lang}, and apply Theorem~\ref{corD} to create the $C_m$-factors. Note that $3u+2=\frac{n+1}{2} \equiv -\left(\frac{n-1}{2}\right) \equiv -(3u+1)\pmod{n}$ and so $S= \pm [2,\frac{n-1}{2}]$. 
The remaining difference is 1 and we apply Lemma~\ref{1 factor Lemma} to create two $C_n$-factors using this difference.
\end{proof}

\begin{lemma}
If $m$ and $n$ are odd integers with $n\geq m\geq 3$, then $(\alpha, n-\alpha)\in {\rm HWP}(C_m[n]; m, n)$ for every $\alpha\equiv 3\pmod{6}$ with $0\leq \alpha \leq n$.
\end{lemma}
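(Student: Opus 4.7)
The plan is to mirror the preceding $\alpha\equiv 1\pmod{6}$ lemma, but replace the single trivial $C_m$-factor $F$ (covering only the difference $0$) by \emph{three} $C_m$-factors jointly covering the symmetric set $\{0,d,-d\}$ for a suitably chosen $d\in\Z_n$. Writing $\alpha=6u+3$, these three factors together with the $6u$ factors obtained from $u$ Skolem-induced triples will give exactly $\alpha$ $C_m$-factors, and the remaining differences will be absorbed into $\beta=n-\alpha$ $C_n$-factors via Lemma~\ref{n factor}.

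For the three base factors I would invoke Theorem~\ref{constructionD} with $T=\{0,d,-d\}$ and the matrix
\[
A=\begin{pmatrix} 0 & d & -d \\ d & -d & 0 \\ -d & 0 & d \end{pmatrix}:
\]
the rows sum to $0$, the columns are permutations of $T$, and since $n$ is odd and $d\neq 0$ the set $T$ has three distinct elements and is closed under negation, so the second part of Theorem~\ref{constructionD} yields a $C_m$-factorization of $C_m[\{0,d,-d\}]$ into three factors for every odd $m\geq 3$. For the remaining $6u$ factors, I would split into two cases according to $u\bmod 4$. When $u\equiv 0,1\pmod{4}$, Theorem~\ref{Skolem} provides a Skolem sequence of order $u$ whose induced triples partition $\pm[1,3u]$; I take $d=3u+1$. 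When $u\equiv 2,3\pmod{4}$, a hooked Skolem sequence of order $u$ exists, inducing triples that partition $\pm[1,3u+1]\setminus\pm\{3u\}$; I set $d=3u$ to plug the hole. In each case Theorem~\ref{corD} produces $6u$ $C_m$-factors, and combined with the three base factors they cover exactly $\{0\}\cup\pm[1,3u+1]$ (the case $u=0$ collapses to just the three base factors with $d=1$, and no Skolem sequence is needed).

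Finally, the uncovered differences form $\pm[3u+2,(n-1)/2]$, and Lemma~\ref{n factor}(1) with $w=3u+2$ converts this portion into $\beta=n-6u-3$ $C_n$-factors; the complement is empty precisely when $\alpha=n$ (which forces $n\equiv 3\pmod{6}$). The main obstacle I expect is producing the three base factors cleanly, since a single Skolem triple produces six factors rather than three, so one needs a genuinely smaller building block; the symmetric choice $\{0,d,-d\}$ is what simultaneously satisfies both the row-sum and column-permutation conditions of Theorem~\ref{constructionD} and the closure-under-negation condition that allows the extension from $\ell=3$ to arbitrary odd $m\geq 3$. Because the matrix construction requires no coprimality of $d$ with $n$ and Lemma~\ref{n factor}(1) handles the complement uniformly (the single leftover $(n-1)/2$ when it occurs is automatically coprime to $n$), no special subcase for $\alpha=n-2$, analogous to the one in the $\alpha\equiv 1\pmod 6$ lemma, should be required.
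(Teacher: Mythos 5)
Your proposal is correct and follows essentially the same route as the paper: the same Skolem/hooked-Skolem split with $d=3u+1$ or $d=3u$, the same circulant $3\times 3$ matrix on $T=\{0,d,-d\}$ fed into Theorem~\ref{constructionD} to get the three extra $C_m$-factors, and Lemma~\ref{n factor} on the interval $\pm[3u+2,(n-1)/2]$ for the $C_n$-factors. The only (harmless) deviation is that you absorb the case $\alpha=n$ into the same construction, whereas the paper dispatches it via Theorem~\ref{C_m || C_m[n]}.
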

\begin{proof}
We write $\alpha = 6u+3$ and note that if $\alpha=n$ this is Lemma~\ref{C_m || C_m[n]}, so we may assume $0<\alpha<n$.
When $u\equiv 0,1\pmod{4}$ we use a Skolem sequence of order $u$, which exists by Theorem~\ref{Skolem}, to partition the set $S= \pm[1, 3u]$ into triples $\{t_1, t_2, t_3\}$ such that $t_1+t_2+t_3=0$.
When $u\equiv 2,3\pmod{4}$ we use a hooked Skolem sequence of order $u$, which exists by Theorem~\ref{Skolem}, to partition the set $S= \pm[1, 3u-1]\cup \pm\{3u+1\}$ into triples $\{t_1, t_2, t_3\}$ such that $t_1+t_2+t_3=0$. 
In each case, there is a $C_m$-factorization of $C_m[S]$ with $\alpha-3$ factors by Theorem~\ref{corD}.
Let
\[d = \left\{ 
\begin{array}{cl}
3u+1 & \mbox{ when } u \equiv 0,1 \pmod{4}\\
3u & \mbox{ when } u\equiv 2,3 \pmod{4} \\
\end{array}
\right.
\mbox{and }
A = \left[
\begin{array}{ccc}
d & -d & 0 \\
0 & d & -d \\
-d & 0 & d \\
\end{array}
\right].
\]
$A$ satisfies all of the properties of Theorem~\ref{constructionD}, and so gives a $C_m$-factorization of $C_m[\pm\{0,d\}]$ into three $C_m$-factors.

We use the remaining differences, $S^* = \pm\left[3u+2, \frac{n-1}{2}\right]$, to create $C_n$-factors by Lemma~\ref{n factor}.
\end{proof}

\subsection{Case $\alpha \equiv 2 \pmod{6}$}

Let $n\geq m\geq 3$ be odd.
Given positive integers $a$, $b$, and $u$, we define $U_{a,b} = \{a, 2a, 3a, 4a \} \cup \{b, 2b\}$ and
$S_{u,a,b} = [1,u]\setminus U_{a,b}$. Also, 
we define the matrix $A_a=\left[
  \begin{array}{r}
     B_a \\
    -B_a
  \end{array}
  \right],$
where $B_a$ is defined as  
\[
  B_{a}= a\left[
  \begin{array}{rrr}
    1 &   3 & -4\\
    2 &    1 & -3 \\
    3 &  -4 &  1  \\
    4 &  -2  & -2
  \end{array}
  \right].
\]

\begin{lemma}
\label{gen 2}
Let $n \geq m \geq 3$ be odd integers, and let $\alpha \leq n-5$ be a positive integer with $\alpha \equiv 2 \pmod{6}$.  Suppose that there exist positive integers $a$ and $b$, with $b$ coprime to $n$, so that 
\begin{enumerate}
\item $U_{a,b} \subseteq [1,(\alpha+4)/2]$, and 
\item there exist triples $\pm T_i$, $1 \leq i \leq (\alpha-8)/6$, satisfying the conditions of Theorem~\ref{corD} on the set $\pm S_{(\alpha+4)/2,a,b}$,
\end{enumerate}
Then $(\alpha,n-\alpha)\in {\rm HWP}(C_m[n]; m,n)$.
\end{lemma}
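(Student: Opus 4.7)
The proof is an assembly argument: I partition the set of mixed differences $\pm[0,(n-1)/2]$ of $C_m[n]$ into four blocks and handle each block separately, so that two of them produce $C_m$-factors totalling $\alpha$ and the other two produce $C_n$-factors totalling $n-\alpha$.

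For the $C_m$-factors, I first invoke Theorem~\ref{corD} on the triples $\pm T_i$ supplied by hypothesis~(2); since each $|T_i|=3$, this yields a $C_m$-factorization of $C_m[\pm S_{(\alpha+4)/2,a,b}]$ consisting of $6\cdot(\alpha-8)/6 = \alpha-8$ factors. Next, a direct check confirms that the matrix $A_a$ displayed before the lemma satisfies the three hypotheses of Theorem~\ref{constructionD} for $T=\pm\{a,2a,3a,4a\}$: every row of $B_a$ sums to $0$, every column of $A_a$ is a permutation of $T$, and $T$ is closed under negation. Theorem~\ref{constructionD} then supplies $|T|=8$ further $C_m$-factors that exactly cover $\pm\{a,2a,3a,4a\}$. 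Hypothesis~(2) forces the six elements of $U_{a,b}$ to be distinct (otherwise $|S_{(\alpha+4)/2,a,b}|$ would not equal $(\alpha-8)/2$), so the two blocks of $C_m$-factors are edge-disjoint and together cover exactly $\pm([1,(\alpha+4)/2]\setminus\{b,2b\})$, giving $\alpha$ $C_m$-factors in total.

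For the $C_n$-factors, the differences still to be covered are $\pm\{0,b,2b\}$ and $\pm[(\alpha+6)/2,(n-1)/2]$. Since $b$ is coprime to $n$, Theorem~\ref{5 classes} produces $5$ $C_n$-factors on $C_m[\pm\{0,b,2b\}]$. The remaining interval contains $(n-\alpha-5)/2$ differences — a non-negative quantity by $\alpha\leq n-5$, and empty exactly when $\alpha=n-5$ — and Lemma~\ref{n factor}(1) with $w=(\alpha+6)/2$ turns it into $n-\alpha-5$ more $C_n$-factors. Together these produce $5+(n-\alpha-5)=n-\alpha$ $C_n$-factors, completing the required $\mathrm{HW}(C_m[n];m,n;\alpha,n-\alpha)$.

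Since all four tools are already in place, the only step inside the proof that requires any work is the verification that $A_a$ meets the hypotheses of Theorem~\ref{constructionD}; everything else is bookkeeping on the partition of $\pm[0,(n-1)/2]$. The real obstacle to using this lemma does not lie in the proof itself but in \emph{supplying} the data $(a,b,T_1,\ldots,T_{(\alpha-8)/6})$ that meets hypotheses~(1) and~(2), which the remaining subcase analyses of Section~\ref{Section C_m[n]} will have to handle case by case (using, for example, Langford-sequence constructions for the triples).
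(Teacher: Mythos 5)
Your proposal is correct and follows essentially the same route as the paper's proof: eight $C_m$-factors from $A_a$ via Theorem~\ref{constructionD}, $\alpha-8$ from the triples via Theorem~\ref{corD}, five $C_n$-factors from Theorem~\ref{5 classes} on $\pm\{0,b,2b\}$, and the remainder from Lemma~\ref{n factor} on $\pm[(\alpha+6)/2,(n-1)/2]$. Your counting argument for why the six elements of $U_{a,b}$ must be distinct is, if anything, slightly more careful than the paper's, which attributes this to condition~1.
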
 

\begin{proof}
Let $u = (\alpha+4)/2$.
Note that the elements of $\pm U_{a,b}$ are all distinct in $\Z_n$ by condition $1$. Hence,
$A_a$ satisfies the assumptions of Theorem~\ref{constructionD}, and so there are eight $C_m$-factors covering $C_m[\{\pm a,$ $\pm 2a,\pm 3a,\pm 4a\}]$.
Furthermore, $b$ is coprime to $n$, so $b$ satisfies the conditions of Theorem~\ref{5 classes} and there are five $C_n$-factors covering $C_m[\{0, \pm b, \pm 2b\}]$. 
In view of condition $2$, there are $\alpha-8$ $C_m$-factors covering $C_m[S_{u,a,b}]$. 
Between them this exactly covers $C_m[S]$, where $S = \pm [0,u]$. The remaining differences are $S^* = \pm \left[u+1, \frac{n-1}{2}\right]$ and so $C_m[S^*]$ can be factored into $C_n$-factors by Lemma~\ref{n factor}.
\end{proof}

\begin{lemma}
\label{alpha=2}
If $n$ and $m$ are odd integers with $n\geq m\geq 3$, $\alpha\equiv 2\pmod{6}$, $\alpha \geq 8$ and $n\geq\alpha+5$, then $(\alpha, n-\alpha)\in {\rm HWP}(C_m[n],m,n)$, except possibly when $(m,n,\alpha)=(3,13,8),(3,15,8)$.
\end{lemma}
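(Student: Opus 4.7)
The strategy is to apply Lemma~\ref{gen 2} wherever possible and to treat the two small cases $u \in \{1, 2\}$ directly. Writing $\alpha = 6u+2$, Lemma~\ref{gen 2} requires exhibiting positive integers $a$ and $b$, with $b$ coprime to $n$, such that $U_{a,b} = \{a, 2a, 3a, 4a, b, 2b\}$ is a six-element subset of $[1, 3u+3]$ and the complement $S_{3u+3, a, b}$ admits a partition into $u-1$ signed triples summing to $0$. A direct inspection shows that for $u \in \{1, 2\}$ no $(a, b)$ of the required shape satisfies the containment condition (the constraint $4a \leq 3u+3$ forces $a \leq 2$, and then $\{b, 2b\}$ cannot fit disjointly in the remaining gap of $[1,6]$ or $[1,9]$), so these cases must be handled ad hoc.

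For $u \geq 3$, I would base the triples on a Langford or hooked Langford sequence of order $u - 1$ (Theorem~\ref{lang}), splitting into subcases according to $(u-1) \pmod 4$ to ensure existence. The triples induced by such a sequence cover a consecutive range $\pm[d, d+3(u-1)-1]$ (with possibly a single hook), and the task is to choose $a$ and $b$ so that the complement of this range in $[1, 3u+3]$ has the form $\{a, 2a, 3a, 4a\} \cup \{b, 2b\}$. Since the bare Langford complement is an interval union, which rarely fits the multiplicative shape, I would typically modify one or two Langford triples by hand, swapping boundary elements with elements near $3u+3$ to open up the needed slots. The values $a \in \{1, 2, 3\}$ with $b$ a small integer coprime to $n$ (for instance a power of $2$, which is automatically coprime to odd $n$) give enough flexibility; the smallest values of $u$ (say $u \in \{3, 4, 5, 6, 7\}$) may require explicit tabulation. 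As an illustration, $u = 3$ with $a = 3, b = 4$ yields $U = \{3, 4, 6, 8, 9, 12\}$, and the signed triples $\{1, 10, -11\}, \{2, 5, -7\}$ partition the complement $\{1, 2, 5, 7, 10, 11\}$.

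The cases $u = 2$ ($\alpha = 14$) and $u = 1$ ($\alpha = 8$) must be handled by explicit construction. For $\alpha = 14$ with $m \geq 7$, I would apply Theorem~\ref{corD} to a single signed $7$-subset $T \subset [1, (n-1)/2]$ with $\sum T \equiv 0 \pmod n$, producing all $14$ $C_m$-factors at once, and then cover the remaining differences by five $C_n$-factors from Theorem~\ref{5 classes} together with Lemmas~\ref{2 factor Lemma} and~\ref{1 factor Lemma}. For $\alpha = 14$ with $m \in \{3, 5\}$, only triples are admissible in Theorem~\ref{corD}, and the $14$ $C_m$-factors must be assembled from the $8$-factor matrix $A_a$, the $3$-factor matrix covering $\pm\{0, d\}$, signed triples (each contributing $6$), and the factor $F$ on $\pm\{0\}$; explicit constructions for the tight boundary $n = 19$ would be tabulated. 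The case $\alpha = 8$ is similar but even more constrained, with the exceptional pairs $(3, 13, 8), (3, 15, 8)$ arising because for $m = 3$ with $n \in \{13, 15\}$ the available difference set is too small to admit any valid assembly.

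The main obstacle is the subcase $\alpha = 14$ with $m \in \{3, 5\}$, where Lemma~\ref{gen 2} is unavailable and only size-$3$ odd subsets are admissible in Theorem~\ref{corD}, forcing a manual combination of small primitives; a secondary difficulty is fitting Langford-induced triples into the rigid $U_{a,b}$ shape for each residue of $(u-1) \pmod 4$, which calls for a handful of boundary swaps and short tables for small $u$.
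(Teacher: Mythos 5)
Your outline does follow the paper's strategy---reduce to Lemma~\ref{gen 2} via Langford-induced triples for large $\alpha$, and treat $\alpha=8,14$ (where, as you correctly verify, no admissible $(a,b)$ exists) by direct construction---and your $\alpha=20$ illustration ($a=3$, $b=4$, triples $\{2,5,-7\}$, $\{1,10,-11\}$) coincides exactly with the paper's. But there are two genuine gaps. First, the step ``modify one or two Langford triples by hand, swapping boundary elements'' is where essentially all of the difficulty of the general case lives, and it is not carried out. Since $U_{a,b}=\{a,2a,3a,4a,b,2b\}$ can never equal the initial interval $[1,6]$, the complement $S_{(\alpha+4)/2,a,b}$ is never a single interval; the workable device is to reserve a fixed block of small differences for explicitly given triples and apply a Langford sequence of defect $13$ (when $n=\alpha+5$) or $17$ (when $n\geq\alpha+7$) to what remains, and the existence condition $\nu\geq 2d-1$ in Theorem~\ref{lang} then only bites once $\alpha$ is large (the paper takes $\alpha\geq 224$). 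Your estimate that only $u\in\{3,\ldots,7\}$ need tabulation is therefore far off: every $\alpha$ with $26<\alpha<224$ requires an explicitly exhibited triple system (this is the content of Appendix~\ref{2 mod 6 Appendix}), and a proof must supply the equivalent. You also do not address why $n=\alpha+5$ and $n\geq\alpha+7$ force different choices of $b$: when $n=\alpha+5$ every difference in $[1,(n-1)/2]$ is consumed, so $b$ must lie inside $[1,(\alpha+4)/2]$, whereas for larger $n$ one takes $b=8$ and must then absorb the stray difference $2$ via Lemma~\ref{1 factor Lemma} or pair it using Lemma~\ref{2 factor Lemma}.

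Second, the cases $\alpha=8$ and $\alpha=14$, which you rightly flag as the main obstacle, are described but not solved, and they are exactly where the stated exceptions and the most delicate constructions occur: for $\alpha=8$ one uses $A_3$ (or $A_4$ when $n=21$) for $n\geq 17$ together with ad hoc $4\times 5$ matrices for $n=13,15$ that only apply when $m\geq 5$, which is precisely what produces the exceptions $(3,13,8)$ and $(3,15,8)$; for $\alpha=14$ one uses $A_3$ plus the single triple $\{4,10,-14\}$ for $n\geq 25$ and explicit $7\times 3$ matrices for $n=19,21,23$. Your proposed signed $7$-subset for $m\geq 7$ is workable but unnecessary (an $\ell=3$ construction covers all odd $m\geq 3$ simultaneously), and the claim that for $m=5$ ``only triples are admissible'' is not right, since Theorem~\ref{corD} permits parts of size $5$ when $m\geq 5$. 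In short, the skeleton is correct and matches the paper, but the explicit difference partitions and matrices---which constitute most of the lemma---are missing.
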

\begin{proof}
We first assume that $\alpha \geq 224$ and set $u = (\alpha+4)/2\geq 114$. We consider the cases $n=\alpha+5$ and  $n\geq \alpha+7$ separately.

If $n=\alpha+5$, we take $a=3$, $b=4$ in Lemma~\ref{gen 2}. 
We now obtain the required triples on $\pm S_{u,3,4} = \pm [13,u] \cup \pm \{1,2,5,7,10,11\}$. First take $T_1 = \{2,5,-7\}$ and $T_2 = \{1,10,-11\}$.  
Now, let $\nu = (u-12)/3\geq 34$, and take a (hooked) Langford sequence of order $\nu$ and defect $13$, which exists by Theorem~\ref{lang}, to partition the differences $\pm [13, u]$ into triples satisfying the conditions of Theorem~\ref{corD} and we are done. 

Now, let  $n\geq \alpha+7$ and set $\nu = (u-15)/3\geq 33$.
Using $b=8$ in Theorem~\ref{5 classes} gives five $C_n$-factors on $C_m[\pm \{0,8,16\}]$.  Moreover, $A_3$ satisfies the conditions of Theorem~\ref{constructionD} to give eight $C_m$-factors on $C_m[\pm \{3,6,9,12\}]$. By Theorem~\ref{corD}, we obtain the remaining $C_m$-factors by partitioning the set $S \cup \pm \{1,4,5,7,10,11,13,14,15\}$ into triples where either $S=\pm [17,u+1]$ or $\pm [17,u+2]\setminus\{u+1\}$ according to whether $\nu\equiv 0,1\pmod{4}$ or $\nu\equiv 2,3\pmod{4}$. First take $T_1 = \{1,13,-14\}$, $T_2 = \{4,7,-11\}$ and $T_3=\{5,10,15\}$. 
It remains to find triples which partition $S$.

If $\nu\equiv 0,1\pmod{4}$
we take a Langford sequence of order $\nu$ and defect $17$, which exists by Theorem~\ref{lang}, to partition the differences $S = \pm [17, u+1]$ into triples satisfying the conditions of Theorem~\ref{corD}. In this case 
we create two $C_n$-factors on differences $\pm 2$ using Theorem~\ref{1 factor Lemma} and create the remaining $n-\alpha-7$ $C_n$-factors by applying Lemma~\ref{n factor} to the remaining differences $\pm[u+2, (n-1)/2]$.

If $\nu\equiv 2,3\pmod{4}$ we take a hooked Langford sequence of order $\nu$ and defect $17$, which exists by Theorem~\ref{lang}, to partition the differences $S = \pm [17, u+2]\setminus \{\pm (u+1)\}$ into triples satisfying the conditions of Theorem~\ref{corD}. 
In this case, if $n=\alpha+7$, to create the remaining $2$ $C_n$-factors 
we may apply Lemma~\ref{1 factor Lemma} to the differences $\pm 2$.
If $n>\alpha+7$, we create the $C_n$-factors by applying Lemma~\ref{2 factor Lemma} to the pair $(2, u+1)$ and applying
Lemma~\ref{n factor} to the remaining differences  $\pm[u+3, (n-1)/2]$.

Next we consider the case where $\alpha = 8$. First, we use $b=1$ in Theorem~\ref{5 classes} to create five $C_n$-factors on $C_m[0,\pm 1,\pm 2]$.
When $n\geq 17$, $n\neq 21$, $A_3$ satisfies the conditions of Theorem~\ref{constructionD}. 
The absolute value of the elements of $A_3$ and the remaining differences are given in the table below. This table also shows how to partition the remaining differences into pairs whose difference is coprime to $n$ and possibly a leftover value that is coprime to $n$ and so Theorems~\ref{2 factor Lemma} and \ref{1 factor Lemma} apply.
\[
\begin{array}{|c|c|l|}\hline
n & \text{Elements of } A_3 & \text{Remaining Differences} \\ \hline
17 & 3,6,8,5 & (4,7) \\ \hline
19 & 3,6,9,7 & (4),(5,8) \\ \hline
23 & 3,6,9,11 & (4),(5,7),(8,10) \\ \hline
\geq 25 & 3,6,9,12 & (4,5),(7,8),(10,11), (13,14), \ldots, \left(\frac{n-3}{2}, \frac{n-1}{2}\right) \text{ or }\\ 
& & (4,5),(7,8),(10,11), (13,14), \ldots, \left(\frac{n-5}{2},\frac{n-3}{2}\right), \left(\frac{n-1}{2}\right) \\ 
\hline
\end{array}
\]
When $n=21$, $A_4$ satisfies the conditions of Theorem~\ref{constructionD}, and the remaining differences can be paired as $(3,7)$ and $(6,10)$.
When $m>3$, and $n=13$ or $15$ we use the matrices of the form $A=\left[
  \begin{array}{r}
     B \\
    -B
  \end{array}
  \right]$
in Theorem~\ref{constructionD}, where $B$ is given below.
\[
\begin{array}{cc}
\begin{array}{rcc}
B & = & \left[
  \begin{array}{rrrrr}
    3 & -5 & 4  & -6  & 4 \\
    4 & -3 & -5 &  -3 & -6\\
    5 & 6  & 6  & 4 & 5    \\
    6 & -4 & 3  & 5 & 3  
  \end{array}
  \right], \\
  & & n = 13 \\
\end{array}
& 
\begin{array}{rcc}
B & = & \left[
  \begin{array}{rrrrr}
    3 & -7 &  7 & -6 & 3 \\
    5 &  3 & -6 &  7 & 6 \\
    6 & -5 & -3 & -5 & 7 \\
    7 &  6 & -5 & -3 & -5 
  \end{array}
  \right].\\
	& & n = 15 \\
\end{array} 
\end{array} 
\]
If $n=13$, we are done.  When $n=15$, the remaining differences are $\pm 4$ which are coprime to $n$, so we can apply Theorem~\ref{1 factor Lemma} to form two $C_n$-factors on $C_m[\pm 4]$.

When $\alpha=14$, $n\geq 25$, using $b=1$ in Theorem~\ref{5 classes} gives five $C_n$-factors on $C_m[\pm \{0,1,2\}]$.  Moreover, $A_3$ satisfies the conditions of Theorem~\ref{constructionD} to give eight $C_m$-factors on $C_m[\pm \{3,6,9,12\}]$, and the triple $T_1=\{4,10,-14\}$ can be used in Theorem~\ref{corD} to give six further $C_m$-factors.  The following table shows how to partition the remaining differences in $[1,\ldots,\frac{n-1}{2}]$ into pairs with difference relatively prime to $n$ and singletons relatively prime to $n$; applying Lemmas~\ref{2 factor Lemma} and \ref{1 factor Lemma} give the remaining $C_n$-factors. When $n \geq 31$, the differences in $[16,\frac{n-1}{2}]$ can be partitioned by Lemma~\ref{n factor}.
\[
\begin{array}{|c|l|l|} \hline
n & \mbox{Elements of $A_4$ and $T_1$} & \mbox{Remaining differences} \\ \hline
25 & 3,6,9,12,  4,10,11     & (5,7), (8) \\
27 & 3,6,9,12,  4,10,13     & (5)(8),(7,11) \\
29 & 3,6,9,12,  4,10,14     & (5,7), (8,11), (13) \\
\geq 31 & 3,6,9,12,  4,10,14     & (5,7), (8,11), (13,15), [16,\frac{n-1}{2}] \\ \hline
\end{array}
\]

 For $n=19,21$ and $23$,  we use the matrices of the form $A=\left[
  \begin{array}{r}
     B \\
    -B
  \end{array}
  \right]$
in Theorem~\ref{constructionD}, where $B$ is given below.
\[
\begin{array}{l}
B = \left[
  \begin{array}{rrr}
    3 &  6 & -9  \\
    4 &  4 & -8  \\ 
    5 & -8 &  3  \\
    6 &  9 &  4  \\
    7 &  7 &  5  \\
    8 &  5 &  6  \\
    9 &  3 &  7
  \end{array}
  \right], \\
  \hspace{1.5cm} n = 19 \\
\end{array}
\begin{array}{l}
B = \left[
\begin{array}{rrr}
 3 & 3  & -6 \\
 4 & -7 & 3 \\
 5 & 8  & 8 \\
 6 & 6  & 9 \\
 7 & 9  & 5 \\
 8 & -4 & -4 \\
 9 & 5  & 7 \\
\end{array}
\right],\\
  \hspace{1.5cm} n = 21
\end{array}
\begin{array}{l}
B  = \left[
\begin{array}{rrr}
 -4 & 11  & -7 \\
 8 & 4 & 11 \\
 11 & -7  & -4 \\
 7 & 8  & 8 \\ 
 3 & 6  & -9 \\
 6 & -9 & 3 \\
 -9 & 3  & 6 \\
\end{array}
\right]. \\
  \hspace{1.5cm} n = 23
\end{array}
\]
Take $b=1$ in Theorem~\ref{5 classes} to form five $C_n$-factors on $C_m[\pm\{0,1,2\}]$.  If $n=21$, two further $C_n$-factors arise by applying Theorem~\ref{1 factor Lemma} on differences $\pm 10$. If $n=23$, four further $C_n$-factors arise by applying Theorem~\ref{1 factor Lemma} on differences $\pm 5$ and $\pm 10$.

When $\alpha = 20$, take $a = 3$, $b = 4$, in Lemma~\ref{gen 2} together with the triples $T_1=\{2,5,-7\}$, $T_2=\{1, 10, -11\}$. 

When $\alpha = 26$, $n > 31$, form the $C_m$-factors by using $A_3$ in Theorem~\ref{constructionD} together with the triples $T_1=\{ 1, 13, -14 \}$, $T_2=\{ 4, 7, -11 \}$, $T_3=\{ 5, 10, -15 \}$ in Theorem~\ref{corD}.  Taking $b=8$ in Theorem~\ref{5 classes} gives five $C_n$-factors and the differences $\pm 2$ give rise to two more $C_n$-factors by Theorem~\ref{1 factor Lemma}. The remaining differences $\pm [17,\ldots,\frac{n-1}{2}]$ can be used to form $C_n$-factors by Lemma~\ref{n factor}.
When $n = 31$, take $a=4$ and $b=7$ along with the triples $T_1=\{1,5,-6\}$, $T_2=\{2,9,-11\}$ and $T_3=\{3,10,-13\}$ in Lemma~\ref{gen 2}.  

For the cases $26 < \alpha < 224$, Appendix~\ref{2 mod 6 Appendix} gives values of $a$ and $b$ and $(\alpha-8)/6$ triples which satisfy the conditions of Lemma~\ref{gen 2}.
\end{proof}

\subsection{Case $\alpha \equiv 4 \pmod{6}$}

We proceed similarly to the case $\alpha \equiv 2\pmod{6}$. Let $n\geq m\geq 3$ be odd.
Given positive integers $a$, $b$ and $u$, we define $U_{a,b}' = \{0,a,2a,4a,5a,6a,b,2b\}$ and
$S'_{u,a,b} = [0,u]\setminus U_{a,b}'$.
Also, we define the matrix $A'_a=\left[
  \begin{array}{r}
     B'_a \\
    -B'_a
  \end{array}
  \right]$,
where $B'_a$ is defined as  
\[
  B'_{a}= a\left[
  \begin{array}{rrr}
    1 & -5 &  4 \\
    2 &  4 & -6 \\
   -4 &  2 &  2 \\
    5 & -6 &  1 \\
   -6 &  1 &  5
  \end{array}
  \right].
\]

\begin{lemma}
\label{gen 4}
Let $n \geq m \geq 3$ be odd integers and let $\alpha\leq n-5$ be a positive integer with $\alpha \equiv 4 \pmod{6}$. Suppose that there exist positive integers $a$ and $b$, with $b$ coprime to $n$, so that
\begin{enumerate}
\item $U'_{a,b} \subseteq [1,(\alpha+4)/2]$, and 
\item there exist triples $\pm T_i$, $1 \leq i \leq (\alpha-10)/6$, satisfying the conditions of Theorem~\ref{corD} on the set $\pm S'_{(\alpha+4)/2,a,b}$,
\end{enumerate}
then $(\alpha,n-\alpha)\in {\rm HWP}(C_m[n]; m,n)$.
\end{lemma}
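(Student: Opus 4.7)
The plan is to mirror the proof of Lemma~\ref{gen 2}, simply replacing the role of $A_a$ by $A'_a$ and adjusting the bookkeeping. The strategy is to partition the differences $\pm[0,(n-1)/2]$ of $C_m[n]$ into four disjoint pieces, each handled by one of our four standard tools, so that the totals come out to exactly $\alpha$ many $C_m$-factors and $n-\alpha$ many $C_n$-factors. Set $u=(\alpha+4)/2$ and $T=\{\pm a,\pm 2a,\pm 4a,\pm 5a,\pm 6a\}$.

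Step 1 (the matrix $A'_a$). First I would verify by inspection that each row of $B'_a$ sums to $0$, that each column of $A'_a$ is a permutation of $T$, and that $T$ is closed under negation; condition~1 of the hypothesis ensures that the ten elements of $T$ are distinct in $\Z_n$. Thus $A'_a$ satisfies the assumptions of Theorem~\ref{constructionD}, producing a $C_m$-factorization of $C_m[T]$ for every odd $m\ge 3$, consisting of $|T|=10$ factors. This accounts for $10$ $C_m$-factors covering $C_m[\pm\{a,2a,4a,5a,6a\}]$.

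Step 2 (the remaining pieces). Since $b$ is coprime to $n$, Theorem~\ref{5 classes} supplies five $C_n$-factors that exactly cover $C_m[\pm\{0,b,2b\}]$. Next, hypothesis~2 gives triples $\pm T_i$ of size $3$ summing to $0$ and partitioning $\pm S'_{u,a,b}$, so Theorem~\ref{corD} yields $6\cdot\tfrac{\alpha-10}{6}=\alpha-10$ additional $C_m$-factors exactly covering $C_m[\pm S'_{u,a,b}]$. Because $\{0\}\cup(U'_{a,b}\setminus\{0\})\cup S'_{u,a,b}=[0,u]$, steps~1 and~2 together cover $C_m[\pm[0,u]]$ with exactly $\alpha$ many $C_m$-factors and $5$ many $C_n$-factors.

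Step 3 (filling the rest). If $\alpha=n-5$ then $u=(n-1)/2$ and nothing remains; otherwise, apply Lemma~\ref{n factor}(1) with $w=u+1$ to the remaining differences $\pm[u+1,(n-1)/2]$, producing $2\bigl(\tfrac{n-1}{2}-u\bigr)=n-\alpha-5$ further $C_n$-factors. Summing, we obtain $\alpha$ $C_m$-factors and $5+(n-\alpha-5)=n-\alpha=\beta$ $C_n$-factors, as required. There is essentially no obstacle here: the lemma is a packaging statement that combines Theorems~\ref{constructionD},~\ref{5 classes},~\ref{corD} and Lemma~\ref{n factor}, and the only substantive verification is the column/zero-sum check for $A'_a$, which is routine. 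The real combinatorial work, as in Lemma~\ref{gen 2}, is postponed to the subsequent application where suitable values of $a$, $b$ and triples $\pm T_i$ must be exhibited for each residue class of $\alpha\pmod{\cdot}$ and small cases.
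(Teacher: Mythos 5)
Your proposal is correct and follows essentially the same route as the paper, which simply states that the proof of Lemma~\ref{gen 4} mirrors that of Lemma~\ref{gen 2} with $A'_a$, $U'_{a,b}$ and $S'_{u,a,b}$ in place of $A_a$, $U_{a,b}$ and $S_{u,a,b}$: ten $C_m$-factors from $A'_a$ via Theorem~\ref{constructionD}, five $C_n$-factors from Theorem~\ref{5 classes}, $\alpha-10$ $C_m$-factors from the triples via Theorem~\ref{corD}, and the remaining $n-\alpha-5$ $C_n$-factors from Lemma~\ref{n factor} on $\pm[u+1,(n-1)/2]$. Your row/column verification of $A'_a$ and the factor counts all check out.
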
 

\begin{proof} 
The proof is similar to the proof of Lemma~\ref{gen 2}, except that we use $A'_a$, $U'_a$ and $S'_{u,a,b}$ in place of $A_a$, $U_a$ and $S_{u,a,b}$.
\comment{
\pnote{The following is to make sure that the claim above is true and should be removed in the final version\\}
Let $u = (\alpha+4)/2$.
Note that if the elements of $\pm U_{a,b}$ are all distinct, then $A_a$ satisfies the conditions of Theorem~\ref{constructionD}, and so there are ten $C_m$-factors covering $C_m[\{\pm a,\pm 2a,\pm 4a, \pm 5a, \pm 6a\}]$.
Furthermore, $b$ is coprime to $n$, so $b$ satisfies the conditions of Theorem~\ref{5 classes} and there are five $C_n$-factors covering $C_m[\{0, \pm b, \pm 2b\}]$. The triples $\pm T_i$, $1 \leq i \leq (\alpha-10)/6$, satisfy the conditions of Theorem~\ref{corD}, and so there are $\alpha-10$ $C_m$-factors covering $C_m[S_{u,a,b}]$. Between them this exactly covers $C_m[S]$, where $S = \pm [0,u]$.

We use the remaining differences to create the remaining $C_n$-factors. 
Let $S^*$ be the complement of $\pm [0,u]$ in $\Z_n\cap \left[0,\frac{n-1}{2}\right]$. 
We pair up consecutive differences from $\left[u+1,\frac{n-1}{2}\right]$ and apply Lemma~\ref{2 factor Lemma} to these pairs of differences to create 4 $C_n$-factors for each pair. If $\beta = n-\alpha \equiv 3\pmod{4}$ one difference remains, namely $\frac{n-1}{2}$, which is coprime to $n$, we use this difference in Lemma~\ref{1 factor Lemma} to create 2 more $C_n$-factors.
}\end{proof}

\begin{lemma}
\label{alpha=4}
If $m$ and $n$ are odd integers with $n\geq m\geq 3$, $\alpha\equiv 4\pmod{6}$, $\alpha > 4$ and $n\geq\alpha+5$, then $(\alpha, n-\alpha)\in {\rm HWP}(C_m[n],m,n)$.
\end{lemma}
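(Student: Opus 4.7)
The strategy mirrors the proof of Lemma~\ref{alpha=2}, with Lemma~\ref{gen 4} playing the role of Lemma~\ref{gen 2}. Write $\alpha=6k+4$ with $k\geq 1$ and set $u=(\alpha+4)/2=3k+4$; one must exhibit $(\alpha-10)/6=k-1$ triples satisfying Theorem~\ref{corD} on $\pm S'_{u,a,b}$ for suitable $a$ and $b$. Combined with $A'_a$ (ten $C_m$-factors covering $C_m[\pm\{a,2a,4a,5a,6a\}]$), Theorem~\ref{5 classes} (five $C_n$-factors on $C_m[\pm\{0,b,2b\}]$), and Lemma~\ref{n factor} (the remaining $C_n$-factors on $\pm[u+1,(n-1)/2]$), this yields the desired factorization of $C_m[n]$.

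For large $\alpha$, I would fix $a=3$, so $\{a,2a,4a,5a,6a\}=\{3,6,12,15,18\}$, and $b=8$, which is coprime to every odd $n$. The residual small differences $\{1,2,4,5,7,9,10,11,13,14,17\}$ admit a short list of seed triples summing to zero (variants of $\{1,13,-14\}$, $\{4,7,-11\}$, $\{5,10,-15\}$), which can be combined with a Langford or hooked Langford sequence of defect $19$ to partition the tail $\pm[19,u]$ or $\pm[19,u+1]\setminus\{u+1\}$ according to the residue of $k$ modulo $4$. As in Lemma~\ref{alpha=2}, the subcases $n=\alpha+5$ and $n\geq\alpha+7$ are handled separately, with any leftover difference pair or singleton absorbed via Lemma~\ref{2 factor Lemma} or Lemma~\ref{1 factor Lemma}. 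The middling range I would relegate to an appendix tabulating $(a,b)$ and seed triples, analogous to Appendix~\ref{2 mod 6 Appendix}.

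The small values $\alpha\in\{10,16,22,28\}$ demand bespoke arguments, because the window $[0,3k+4]$ is too tight to admit disjoint $\{a,2a,4a,5a,6a\}$ and $\{b,2b\}$: for $\alpha=10$ the constraint $6a\leq 7$ forces $a=1$, leaving only $\{3,7\}$ available for $b$, and neither choice works. For each such triple $(m,n,\alpha)$ I would follow the template used for $\alpha=8$ and small $n$ in the proof of Lemma~\ref{alpha=2}: exhibit an explicit rectangular matrix $B$ for which the block matrix with $B$ above $-B$ fulfils Theorem~\ref{constructionD}, thereby producing the outstanding $C_m$-factors on whatever differences remain after Theorem~\ref{5 classes} has been invoked, and then use Lemmas~\ref{1 factor Lemma}, \ref{2 factor Lemma}, and \ref{n factor} to package the residual differences into the required $C_n$-factors. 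When $m=3$ this matrix must have exactly three columns, which limits the flexibility of row sums.

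The main obstacle is precisely this small-$\alpha$ regime, most pressingly $\alpha=10$ at $n\in\{15,17,19\}$ and the analogous tight subcases of $\alpha=16, 22, 28$. Unlike Lemma~\ref{alpha=2}, which admits the exceptions $(3,13,8)$ and $(3,15,8)$, the present lemma is claimed without exception, so every tight instance must be resolved by a concrete construction; finding matrices satisfying Theorem~\ref{constructionD} whose columns exhaust all the needed differences --- even under the $\ell=3$ restriction forced by $m=3$ --- is the delicate point. Once these ad-hoc constructions are in place, the Langford-based argument for large $\alpha$ goes through routinely and the middle range reduces to appendix bookkeeping.
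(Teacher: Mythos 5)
Your overall strategy coincides with the paper's: Lemma~\ref{gen 4} with $a=3$, $b=8$ plus a (hooked) Langford sequence for large $\alpha$, an appendix of $(a,b)$ and seed triples for the middle range, and ad hoc constructions for $\alpha\in\{10,16,22,28\}$. For the asymptotic part your bookkeeping is slightly off but repairable: with $U'_{3,8}=\{0,3,6,12,15,18,8,16\}$ the residual set below the Langford tail must have size divisible by $3$, so the difference $19$ must be swept into the seed triples (the paper uses defect $20$ with the four triples $\{1,10,-11\}$, $\{4,13,-17\}$, $\{2,7,-9\}$, $\{5,14,-19\}$, which cover exactly $[1,19]\setminus U'_{3,8}$); your eleven-element residual set cannot be partitioned into zero-sum triples, and the triple $\{5,10,-15\}$ you float collides with $5a=15$. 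Also note that if a leftover difference such as $2$ is reassigned to $C_n$-factors, the $C_m$-differences must correspondingly extend past $u$ to keep the count $\alpha=10+6(\#\text{triples})$ intact, as in the $\nu\equiv 2,3\pmod 4$ branch of Lemma~\ref{alpha=2}.

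The genuine gap is the one you yourself flag: the cases $\alpha\in\{10,16,22,28\}$ with $\alpha+5\le n\le 37$ are not proved, only promised. Since the lemma is asserted with no exceptions, these tight instances --- most acutely $(\alpha,n)=(10,15),(10,17),(10,21),(16,21),(22,27)$, where $U'_{a,b}$ cannot be fitted disjointly into the available window --- carry essentially all of the content, and ``I would follow the template'' does not discharge them. The paper resolves them by exhibiting explicit matrices for Theorem~\ref{constructionD} (for instance, for $(\alpha,n)=(10,15)$ the $5\times 3$ matrix with rows $(3,5,7)$, $(4,6,5)$, $(5,7,3)$, $(6,3,6)$, $(7,4,4)$, each row summing to $15\equiv 0$), and for the remaining $n\le 37$ by taking $a=1$, $b=8$ --- so that $\{b,2b\}$ escapes the interval $[1,u]$ and Lemma~\ref{gen 4} is applied in spirit rather than literally --- together with tables pairing the leftover differences for Lemmas~\ref{2 factor Lemma} and \ref{1 factor Lemma}. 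Until such data are written down, your argument establishes the lemma only for $\alpha$ large and for $\alpha\le 28$ with $n>37$; the hard finite list remains open in your write-up.
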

\begin{proof}
We first deal with the case where $\alpha \geq 268$. Let $u =(\alpha+4)/2$ and $\nu = (u-19)/3$ and take $a= 3$ and $b=8$ in Lemma~\ref{gen 4}, we now obtain the required triples on $S'_{u,3,8}$.
First take
$T_1 = \{1,10,-11\}$, $T_2 = \{4,13,-17\}$, $T_3=\{2,7,-9\}$, $T_4 = \{5,14,-19\}$.  It remains to find triples which partition $[20, u]$.

If $\nu\equiv 0,3 \pmod{4}$, we take a Langford sequence of order $\nu$ and defect $20$, which exists whenever $\nu\geq 39$ ($\alpha \geq 268$), to partition the differences $S = \pm [20, u]$ into triples satisfying the conditions of Theorem~\ref{corD} and we are done. Otherwise, if $\nu\equiv 1, 2\pmod{4}$ we take a hooked Langford sequence of order $\nu$ and defect $20$, which exists whenever $\nu\geq 39$ ($\alpha \geq 268$), to partition the differences $S = \pm [20, u+1]\setminus \{\pm u\}$ into triples satisfying the conditions of Theorem~\ref{corD}. 
We note that when $n = 2u+3$, so $n=\alpha+7$, since $\alpha\equiv 4\pmod{6}$,  $u \equiv 1\pmod{3}$, and so we can apply Lemma~\ref{n factor} to $S$ to obtain the $C_n$-factors.

For the cases $28 < \alpha < 268$, Appendix~\ref{4 mod 6 Appendix} gives values of $a$ and $b$ and $(\alpha-10)/6$ triples which satisfy the conditions of Lemma~\ref{gen 4}.
For the cases $10 \leq \alpha \leq 28$, in most cases we proceed similarly to Lemma~\ref{gen 4} and find values for $a$ and $b$, with $b$ coprime to $n$, so that the elements of $U'_{a,b}$ are distinct, and $(\alpha-10)/6$  triples satisfying the conditions of Theorem~\ref{corD} on a set disjoint from $U'_{a,b}$. It then remains to show that the remaining differences can be partitioned into pairs and singletons satisfying Theorems~\ref{2 factor Lemma} and \ref{1 factor Lemma} respectively. 

When $10 \leq \alpha \leq 28$ and $n > 37$, we take $a=3$, $b=8$. The differences from $[20,\frac{n-1}{2}]$ satisfy the conditions of Lemma~\ref{n factor} and so can be used to create $C_n$-factors. The required triples and a partition of the leftover differences in $[1,19]$ into pairs whose difference is coprime to $n$ and singletons coprime to $n$ are given in the table below.
\[
\begin{array}{|c|l|l|} \hline
\alpha & \text{Triples} & \text{Leftover Differences} \\ \hline
10 &\multicolumn{1}{c|}{-} & (1,2), (4,5), (7,9), (10,11), (13,14), (17,19)
\\ \hline
16 & \{4, 13, -17\} &  (2), (1, 5), (7,9), (10, 14), (11, 19)
\\ \hline
22 & \{4, 13, -17\}, \{5,  14, -19\} &  (1,2), (7, 9), (10, 11) \\ \hline
28 & \{4, 13, -17\}, \{5, 14, -19\}, & (1), (10, 11)  \\
   &  \{2, 7, -9\} & \\ \hline
\end{array}
\]

We now consider the cases where $10 \leq \alpha \leq 28$ and $\alpha+5 \leq n \leq 37$.  
First, when $\alpha=10$ and $n=15$, $17$ or $21$, then the following matrices can be used to form ten $C_m$-factors by Theorem~\ref{constructionD}.  
\[
\begin{array}{ccccc}
\left[
\begin{array}{rrr}
 3 & 5 & 7 \\
 4 & 6 & 5 \\
 5 & 7 & 3 \\
 6 & 3 & 6 \\
 7 & 4 & 4 \\
\end{array}
\right]
& \hspace*{0.5cm} & 	
\left[
\begin{array}{rrr}
 3 & 3 & -6 \\
 4 & 4 & -8 \\
 5 & 8 & 4 \\
 6 & 6 & 5 \\
 8 & -5 & -3 \\
\end{array}
\right]
& \hspace*{0.5cm} & 
\left[
\begin{array}{rrr}
 3 & 5 & -8 \\
 4 & -8 & 4 \\
 5 & 4 & -9 \\
 8 & -3 & -5 \\
 9 & 9 & 3 \\
\end{array}
\right].
 \\
n=15 & & n=17 & & n=21
\end{array}
\]
Taking $b=1$ in Theorem~\ref{5 classes} gives five $C_n$-factors on $C_m[\pm \{0,1,2\}]$.  If $n=15$, we are done.  If $n=17$, the remaining difference is 7, and Lemma~\ref{1 factor Lemma} gives two $C_n$-factors on $C_m[\pm\{7\}]$.  If $n=21$, the remaining differences are 6, 7 and 10; Lemma~\ref{2 factor Lemma} yields four $C_n$-factors on $C_m[\pm\{6,7\}]$ and Lemma~\ref{1 factor Lemma} give two $C_n$-factors on $C_m[\pm \{10\}]$.  

If $\alpha=10$ and $n=19$ or $23 \leq n \leq 37$, take $a=1$ and $b=8$ to form ten $C_m$-factors and six $C_n$-factors with differences in $\pm U'_{a,b}$ as in the proof of Lemma~\ref{gen 4}. 
The table below gives a partition of the remaining differences into pairs with difference coprime to $n$ and singletons coprime to $n$.  
\[
\begin{array}{|c|l|} 
\multicolumn{2}{l}{\alpha=10} \\ \hline
n &  \text{Leftover Differences} \\ \hline
19 & (7,9) \\ \hline
23 & (3,9), (10, 11) \\ \hline
25 & (3,7), (10,11), (12) \\ \hline
27 & (3,7), (9, 10), (12, 13) \\ \hline
29 & (3,7), (9, 10), (11), (12, 14) \\ \hline
31 & (3,7), (9, 10), (11, 12), (13, 14) \\ \hline
33 & (3,7), (9, 11), (10), (12, 13), (14, 15) \\ \hline
35 & (3,7), (9, 10), (11, 12), (13, 14), (15,17) \\ \hline
37 & (3,7), (9,10), (11,12), (13,14), (15,17), (18) \\ \hline
\end{array} 
\]

When $\alpha=16$ and $n=21$, we use a matrix of the form $A=\left[
  \begin{array}{r}
     B \\
    -B
  \end{array}
  \right]$
in Theorem~\ref{constructionD}, where 
\[
  B = \left[
  \begin{array}{ccc}
	3 & 4 & -7 \\
	4 & 8 &  9 \\
	5 & 6 & 10 \\
	6 & 7 &  8 \\
	7 & 10 & 4 \\
	8 & -3 & -5 \\
	9 &  9 &  3 \\
	10 & 5 &  6 \\
  \end{array}
  \right].
\]
For $\alpha=16$ and $23 \leq n \leq 37$, take $a=1$ and $b = 8$ to factor $C_m[\pm U'_{a,b}]$ into ten $C_m$-factors and five $C_n$-factors as in the proof of Lemma~\ref{gen 4}. The table below gives $(\alpha-10)/6=1$ triple of differences which is used to form six further $C_m$-factors, and a partition of the leftover differences to form $C_n$-factors for each value of $n$.
\[
\begin{array}{|c|c|l|}
\multicolumn{3}{l}{\alpha=16} \\ \hline
n & \text{Triples} & \text{Leftover Differences} \\ \hline
23 & \{3, 9, 11\} & (10) \\ \hline
25 & \{3, 7, -10\} & (11,12) \\ \hline
27 & \{3, 9, -12\} &  (7), (10), (13) \\ \hline
29 & \{3, 9, -12\} & (7, 11), (10, 14) \\ \hline
31 & \{3, 7, -10\} & (9, 11), (12, 13), (14) \\ \hline
33 & \{3, 7, -10\} & (9, 11), (12,13), (14, 15) \\ \hline
35 & \{3, 7, -10\} & (9,11) (12,13) (14, 15), (17)\\ \hline
37 & \{3, 7, -10\} &  (9,11) (12,13) (14,15) (17,18) \\ \hline
\end{array}
\]

When $\alpha=22$ and $n=27$, we construct five $C_n$-factors by taking $b=8$ in Theorem~\ref{5 classes}. We construct twelve $C_m$-factors by taking the triples $T_1=\{6,7,-13\}, T_2=\{5,10,12\}$ in Theorem~\ref{corD} and a further ten $C_m$-factors using a matrix of the form $A=\left[
  \begin{array}{r}
     B \\
    -B
  \end{array}
  \right]$
in Theorem~\ref{constructionD}, where 
\[
  B = \left[
  \begin{array}{ccc}
    1 &   3 & -4\\
    2 &    1 & -3 \\
    3 &  -4 &  1  \\
    4 &  -2  & -2 \\
    9 & 9 & 9 \\
  \end{array}
  \right].
\]

For $\alpha=22$ and $29 \leq n \leq 37$, take $a=1$ and $b=8$ factor $C_m[\pm U'_{a,b}]$ into ten $C_m$-factors and six $C_n$-factors
as in the proof of Lemma~\ref{gen 4}. The table below gives $(\alpha-10)/6=2$ triples of differences which are used to form twelve further $C_m$-factors, and a partition of the leftover differences to form $C_n$-factors.
\[
\begin{array}{|c|l|l|}
\multicolumn{3}{l}{\alpha=22} \\ \hline
n  & \text{Triples} & \text{Leftover Differences} \\ \hline
29 & \{7,10,12\}, \{3,11,-14\} & (9) \\ \hline
31 & \{3, 9, -12\}, \{7,  10, 14\} & (11, 13)  \\ \hline
33 & \{3, 12, -15\}, \{9,  11, 13\} & (7), (10, 14)  \\ \hline
35 & \{3, 7, -10\}, \{9,  12, 14\} & (11, 13), (15, 17) \\ \hline
37 &  \{3, 7, -10\}, \{11,  12, 14\} & (9, 13) (15, 17), (18) \\ \hline
\end{array}
\]

Finally, for $\alpha=28$ and $33 \leq n \leq 37$, the following table gives the values of $a$ and $b$ to factor $C_m[\pm U'_{a,b}]$ as in Lemma~\ref{gen 4}, $(\alpha-10)/6=3$ triples of differences which are used to form 18 further $C_m$-factors, and a partition of the leftover differences to form further $C_n$-factors.
\[
\begin{array}{|c|c|c|l|l|}
\multicolumn{5}{l}{\alpha=28} \\ \hline
n & a & b & \text{Triples} & \text{Leftover Differences} \\ \hline
33 & 2 & 13 & \{1,14,-15\}, \{5,11,-16\}, \{3,6,-9\}  
&\multicolumn{1}{c|}{-} \\ \hline
35 & 2 & 7 & \{1, 5,-6\}, \{3, 13, -16\}, \{9, 11, 15\} & (17) \\ \hline
37 & 1 & 8 & \{3, 7, -10\}, \{11,  12, 14\}, \{9, 13, 15\} & (17, 18)  \\ \hline
\end{array}
\]
\end{proof}

The results of this section are summarized in Theorem~\ref{C_m[n]}, which we restate below.
\newtheorem*{thm:C_m[n]}{Theorem \ref{C_m[n]}}
\begin{thm:C_m[n]}
If $n$, $m$ and $\alpha$ are odd integers with $n\geq m\geq 3$, $0\leq \alpha \leq n$, then $(\alpha, \beta)\in {\rm HWP}(C_m[n]; m, n)$, if and only if $\beta=n-\alpha$, except possibly when $\alpha= 2,4$, $\beta = 1, 3$, or $(m,n,\alpha) = (3,11,6)$, $(3,13,8)$, $(3,15,8)$.
\end{thm:C_m[n]}

\section{Main Theorem}
\label{Section Main}

In this section we prove the main result. We first prove a weaker result in the case when $v=mn$ and then prove the main result for $v>mn$.

\begin{theorem}
\label{HW t=1}
If $m$ and $n$ are odd integers with $n \geq m \geq 3$, then $(\alpha,\beta) \in \mathrm{HWP}(mn; m, n)$ if and only if $\alpha+\beta=(mn-1)/2$, except possibly when $\beta \in [1,(n-3)/2] \cup \{(n+1)/2,(n+5)/2\}$, $(m,\alpha) = (3,2)$, $(3,4)$, or $(m,n,\alpha,\beta) = (3,11,6,10)$, $(3,13,8,10)$, $(5,7,9,8)$, $(5,9,11,11)$, $(5,9,13,9),$
$(7,9,20,11)$, $(7,9,22,9)$.
\end{theorem}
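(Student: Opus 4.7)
The plan is to use a structural decomposition of $K_{mn}$ compatible with the partition of its vertex set into $m$ blocks of size $n$. I would identify $V(K_{mn})$ with $\Z_m \times \Z_n$ and let $K_n^{(i)}$ denote the complete graph on $\{i\} \times \Z_n$; then
\[
E(K_{mn}) = \bigsqcup_{i \in \Z_m} E(K_n^{(i)}) \;\sqcup\; E(K_m[n]).
\]
By Theorem~\ref{Hamiltonian}, $K_m$ decomposes into $(m-1)/2$ Hamiltonian cycles, and lifting each to $K_m[n]$ yields an edge-disjoint decomposition $K_m[n] = \bigsqcup_{j=1}^{(m-1)/2} C_m[n]^{(j)}$. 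Similarly, decomposing each $K_n^{(i)}$ into $(n-1)/2$ Hamiltonian cycles via Theorem~\ref{Hamiltonian} and bundling the $k$-th Hamiltonian cycle across all $i = 0, \ldots, m-1$ produces a $C_n$-factor of $K_{mn}$, giving $(n-1)/2$ such factors in total. Necessity of $\alpha + \beta = (mn-1)/2$ is immediate from $(mn-1)$-regularity.

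The next step is to apply Theorem~\ref{C_m[n]} to each $C_m[n]^{(j)}$ independently, splitting it into $\alpha_j$ $C_m$-factors and $n - \alpha_j$ $C_n$-factors for any chosen $\alpha_j$ in $A_n := \{0,1,3\} \cup [5,n-4] \cup \{n-2,n\}$ (with a few additional $(n,\alpha_j)$-specific possible exceptions restricted to $m = 3$ and $n \in \{11,13,15\}$, namely $\alpha_j \in \{6,8,8\}$ respectively). Combining these decompositions with the $(n-1)/2$ $C_n$-factors from the previous step gives a solution HW$(mn; m, n; \alpha, \beta)$ with $\alpha = \sum_{j=1}^{(m-1)/2} \alpha_j$ and $\beta = (mn-1)/2 - \alpha$. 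Since each $\alpha_j \leq n$, this construction forces $\beta \geq (n-1)/2$, which matches exactly the excluded range $\beta \in [1,(n-3)/2]$ in the theorem.

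The remaining task, which is the main obstacle, is purely combinatorial: to determine which $\alpha \in [0,(m-1)n/2]$ are realizable as sums of $(m-1)/2$ elements from $A_n$. For $m = 3$ the sum has a single term, so $\alpha = \alpha_1 \in A_n$: the exceptions $\alpha \in \{2,4\}$ yield the uniform $(3,2)$ and $(3,4)$ exclusions, the sporadic triples $(3,11,6)$ and $(3,13,8)$ transfer directly to $(3,11,6,10)$ and $(3,13,8,10)$, and the third sporadic case $(3,15,8)$ is absent from the final exception list, so it must be handled by a separate ad hoc construction. For $m \geq 5$ and $n \geq 11$, the set $A_n$ omits only $\{2, 4, n-3, n-1\}$ from $[0,n]$, and a short direct verification using representations such as $(0,\alpha)$, $(1,\alpha-1)$, $(3,n-6)$, $(6,n-4)$, $(n,\alpha-n)$ shows that pairwise sums from $A_n$ realize $[0, 2n] \setminus \{2n-3, 2n-1\}$; adding further summands extends the coverage to $[0, (m-1)n/2]$ missing only two top values which translate to $\beta \in \{(n+1)/2, (n+5)/2\}$.

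The sporadic small-$n$ exceptions $(5,7,9,8)$, $(5,9,11,11)$, $(5,9,13,9)$, $(7,9,20,11)$, $(7,9,22,9)$ arise precisely from direct enumeration of sums in the sparse sets $A_7 = \{0,1,3,5,7\}$ (two summands, missing $\{9,11,13\}$) and $A_9 = \{0,1,3,5,7,9\}$ (two summands missing $\{11,13,15,17\}$ and three summands missing $\{20,22,24,26\}$); in each case the unreachable $\alpha$ values that do not correspond to the universally excluded $\beta \in \{(n+1)/2,(n+5)/2\}$ are exactly the listed sporadic quadruples, which the construction cannot reach and are left as possible exceptions. The key technical burden is thus the uniform verification of the $n \geq 11$ sum-covering claim together with the explicit treatment of the $(3,15,8)$ case.
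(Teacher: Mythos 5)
Your construction is exactly the paper's: Walecki's theorem on $K_m$ blown up by $n$ gives $(m-1)/2$ edge-disjoint copies of $C_m[n]$, the $m$ holes of size $n$ are filled with Hamiltonian factorizations of $K_n$ contributing $(n-1)/2$ $C_n$-factors, and Theorem~\ref{C_m[n]} is applied to each $C_m[n]$ layer. Your reformulation as a sumset problem over $A_n=[0,n]\setminus\{2,4,n-3,n-1\}$ is an equivalent repackaging of the paper's case analysis on $\alpha=xn+y$ (the paper's swaps $(y,n-y)\mapsto(1,n-1)+(y-1,n-y+1)$ and $(y,n-y)\mapsto(n-5,5)+(y+5,n-y-5)$ are precisely your two-summand representations), and your enumerations for $n=7,9$ and the covering claim for $n\ge 11$ check out and reproduce the listed sporadic quadruples.

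There is, however, one concrete unresolved case: $(m,n)=(3,15)$ with $\alpha=8$. Here $r=1$, so the construction needs $(8,7)\in\mathrm{HWP}(C_3[15];3,15)$, which is exactly the sporadic possible exception $(3,15,8)$ of Theorem~\ref{C_m[n]}; yet the statement being proved does not except this case. You correctly observe the discrepancy but offer no argument, only the assertion that it ``must be handled by a separate ad hoc construction.'' The paper closes this hole by invoking, at the outset, the complete solution of $\mathrm{HWP}(v;3,15)$ from \cite{AdaBilBryElz} (valid except for $\beta=1$, which lies in the excluded range), so that $(m,n)=(3,15)$ never enters the main construction; without that citation or an explicit factorization your proof is incomplete at this single point. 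A second, minor, loose end shared with the paper: your construction always produces $\beta\ge(n-1)/2$, so the asserted case $\beta=0$ must be obtained separately as the uniform $C_m$-factorization of $K_{mn}$ supplied by Theorem~\ref{OP uniform}.
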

\begin{proof}
We first note that the condition that $\alpha+\beta = (mn-1)/2$ is necessary by Theorem \ref{nec}. When $m=n$, this is equivalent to the uniform Oberwolfach problem and is covered by Theorem~\ref{OP uniform}; we thus assume that $n>m$. The case $(m,n) = (3,15)$ is solved in \cite{AdaBilBryElz}, except when $\beta=1$, so we may assume $(m,n) \neq (3,15)$.

By Theorem~\ref{Hamiltonian}, there is a $C_m$-factorization of $K_m$  with $r=(m-1)/2$ $C_m$-factors. Expand each point of this factorization by $n$ to get a $C_m[n]$-factorization of $K_m[n]$ with $r$ $C_m[n]$-factors. 
This design has $m$ uncovered holes of size $n$; fill each of these holes of size $n$ with a Hamiltonian factorization of $K_n$, to get $(n-1)/2$ $C_n$-factors. 

Set $\alpha = xn+y$, with $0 \leq x < r$ and $0 \leq y < n$; hence, $\beta = (n-1)/2 + (r-x)n-y$. 
If the conditions of Theorem~\ref{C_m[n]} are satisfied for $y$, i.e.\ there is an HW$(C_m[n]; m, n; y,n-y)$, then we fill $x$ resolution classes with an HW$(C_m[n]; m, n;  n, 0)$, one class with an HW$(C_m[n]; m, n; y,n-y)$.  
The remaining $r-x-1$ classes are filled with an $\mathrm{HW}(C_m[n];m,n;0,n)$.

Now suppose that the conditions of Theorem~\ref{C_m[n]} are not satisfied for $y$, i.e.\ $y \in \{2,4,n-3,n-1\}$ or $(m,n,y) = (3,11,6)$, $(3,13,8)$ (we are supposing $(m,n)\neq (3,15)$). 
If $x \leq r-2$, then we fill $x$ of the $C_m[n]$-factors with an $\mathrm{HW}(C_m[n];m,n;n,0)$, one with an $\mathrm{HW}(C_m[n];m,n;1,n-1)$, one with an $\mathrm{HW}(C_m[n];m,n;y-1,n-y+1)$ and the remaining $r-x-2$ classes with an $\mathrm{HW}(C_m[n];m,n;0,n)$.  

We are left with the case $x=r-1$. 
Note that if $m=3$, then $r=1$, so the remaining factors can be found if and only if $(y,n-y) \in \mathrm{HWP}(C_m[n];m,n)$, leading to the possible exceptions when $(m,n,\alpha)=(3,11,6)$ or $(3,13,8)$, or $(m,\alpha) = (3,2), (3,4)$. 
Now suppose that $m \geq 5$, so that $r \geq 2$, but $n>m$, hence $n\geq7$.  
Note that since $\beta \neq (n+1)/2, (n+5)/2$, we have that $y \neq n-1, n-3$. 

We are thus left with the case $y\in \{2,4\}$ and $y \leq n-5$. If $n \geq 11$, we fill $(x-1)$ $C_m[n]$-factors with an $\mathrm{HW}(C_m[n];m,n;n,0)$, one with an  $\mathrm{HW}(C_m[n];m,n;n-5,5)$ (Theorem~\ref{C_m[n]}, $n-5>4$) and one with an $\mathrm{HW}(C_m[n];m,n;y+5,n-y-5)$ (Theorem~\ref{C_m[n]}, $y+5=7,9$). 
For $n=7,9$ we obtain the possible exceptions when $(m,n,\alpha)\in \{(5,7,9,8), (5,9,11,11)$, $(5,9,13,9),$
$(7,9,20,11)$, $(7,9,22,9)\}$.
\end{proof}

We are now ready to prove the main theorem, Theorem~\ref{main}, in a similar manner to the proof above for $v=mn$.

\begin{theorem} \label{HW t>1}
If $m$ and $n$ are odd integers with $n \geq m \geq 3$ and $t>1$, then $(\alpha, \beta)\in {\rm HWP}(mnt; m, n)$ if and only if $t$ is odd, $\alpha, \beta \geq 0$ and $\alpha+\beta = (mnt-1)/2$, except possibly when $\beta = 1$ or $3$, or $(m,n,\beta) = (5,9,5)$, $(5,9,7)$, $(7,9,5)$, $(7,9,7)$, $(3,13,5)$.
\end{theorem}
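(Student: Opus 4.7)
I plan to mimic the proof of Theorem~\ref{HW t=1} one level up. I partition $V(K_{mnt})$ into $t$ classes of size $mn$, so that $K_{mnt}$ decomposes edge-disjointly as $K_t[mn] + t\,K_{mn}$. The inter-class graph $K_t[mn]$ will be factored by a blow-up construction, while each of the $t$ holes of size $mn$ will be filled using Theorem~\ref{HW t=1}. Using the identification $K_t[mn] = K_t[m][n]$, I first apply Liu's theorem (Theorem~\ref{Liu}) to obtain a $C_m$-factorization of $K_t[m]$ into $r' = (t-1)m/2$ parallel classes; the hypotheses hold because $t\geq 3$ is odd, $m \geq 3$, and none of Liu's excluded triples can arise. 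Blowing each parallel class up by $n$ produces a vertex-disjoint union of $t$ copies of $C_m[n]$ that together span $K_t[mn]$.

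For the $i$-th parallel class I apply Theorem~\ref{C_m[n]} with a common parameter $\alpha_i$ to all $t$ copies of $C_m[n]$; stacking the resulting factorizations yields $\alpha_i$ $C_m$-factors and $n - \alpha_i$ $C_n$-factors of $K_t[mn]$. For the holes, I fill each $K_{mn}$ using the same $\mathrm{HW}(mn;m,n;\alpha_h,\beta_h)$ given by Theorem~\ref{HW t=1}, contributing $\alpha_h$ $C_m$-factors and $\beta_h$ $C_n$-factors to $K_{mnt}$. The totals are $\alpha = \alpha_h + \sum_{i=1}^{r'}\alpha_i$ and $\beta = \beta_h + \sum_{i=1}^{r'}(n-\alpha_i)$, summing to $(mnt-1)/2$ as required. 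The boundary cases $\beta = 0$ and $\beta = (mnt-1)/2$ follow directly from Theorem~\ref{OP uniform}, and the degenerate case $m = n$ reduces to a uniform Oberwolfach instance. For $\beta \geq (mn-1)/2$ I take $(\alpha_h,\beta_h) = (0,(mn-1)/2)$, a $C_n$-factorization of $K_{mn}$ always available from Theorem~\ref{OP uniform}; for $\beta < (mn-1)/2$ I symmetrically take $(\alpha_h,\beta_h) = ((mn-1)/2,0)$. In both cases the remaining slack is covered by choosing the $\alpha_i$'s from the permitted set $\{0,1,3,5,6,\ldots,n-4,n-2,n\}$ dictated by Theorem~\ref{C_m[n]}, and since $r' \geq 3$ under our hypotheses, the two regimes overlap and between them cover every intermediate $\beta$.

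The main obstacle is that the permitted $\alpha_i$ set omits $n-1$ and $n-3$, so the values $\gamma_i = n - \alpha_i$ avoid $1$ and $3$; consequently no choice of parameters in the construction can yield $\beta \in \{1, 3\}$, which gives the first family of exceptions in the statement. A short bookkeeping argument is also needed when $(m,n) \in \{(3,11),(3,13),(3,15)\}$, since Theorem~\ref{C_m[n]} has further specific $(m,n,\alpha_i)$ exceptions; here one $\alpha_i$ slot must be reserved for a workaround. The remaining listed exceptions $(m,n,\beta) \in \{(5,9,5),(5,9,7),(7,9,5),(7,9,7),(3,13,5)\}$ fall in a narrow window of $\beta$ where the $\beta_h$-exceptions of Theorem~\ref{HW t=1} on the hole side block precisely the choices that the blow-up cannot compensate for. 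These have to be examined individually, and together with the $\beta \in \{1,3\}$ cases they constitute the complete list of unresolved exceptions in the theorem.
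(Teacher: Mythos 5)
Your proposal is correct in outline and follows essentially the same route as the paper: decompose $K_{mnt}$ as $K_t[mn]$ plus $t$ holes $K_{mn}$, obtain the $C_m[n]$-factorization of $K_t[mn]$ by blowing up Liu's $C_m$-factorization of $K_t[m]$, resolve each class via Theorem~\ref{C_m[n]}, fill the holes with uniform factorizations from Theorem~\ref{OP uniform} (or, in the last few stubborn cases, with a mixed design from Theorem~\ref{HW t=1}), and trace the exceptions to the forbidden values $\alpha_i\in\{2,4,n-1,n-3\}$ together with the small $(m,n)$ where the $(n-5,5)$/$(y+5,n-y-5)$ substitution is unavailable. The only substantive difference is that the paper discharges $(m,n)=(3,5),(3,7),(3,9),(3,15)$ up front by citing known complete solutions and then works the residual cases $x=r-1$, $y\in\{2,4\}$ explicitly, whereas you defer that individual examination; completing it as the paper does yields exactly the stated exception list.
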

\begin{proof}
We first note that the conditions that $t$ be odd and $\alpha+\beta = (mnt-1)/2$ are necessary by Theorem \ref{nec}. We now show sufficiency. When $m=n$ (or equivalently $\alpha=0$ or $\beta=0$), this is equivalent to the uniform Oberwolfach problem and is covered by Theorem~\ref{OP uniform}; we thus assume that $n>m$. Except possibly when $\beta=1$, the cases $(m,n) = (3,5)$, $(3,15)$ are solved in \cite{AdaBilBryElz}, and the cases $(m,n) = (3,7), (3,9)$ are solved in \cite{Lei Fu} and \cite{Kamin}, respectively. So we may assume $(m,n) \neq (3,5), (3,7)$, $(3,9)$ or $(3,15)$.

We start with a $C_m$-factorization of $K_t[m]$, which exists by Theorem~\ref{Liu}. 
We note that this design has $r = m(t-1)/2$ $C_m$ factors. We expand each point of this factorization by $n$, to create a $C_m[n]$-factorization of $K_t[m][n]\cong K_t[mn]$ with $r$ $C_m[n]$-factors. This design has $t$ uncovered holes of size $mn$, which we can fill with a $C_m$ or $C_n$-factorization of $K_{mn}$ as we choose, by Theorem~\ref{OP uniform}. This step will yield either $(mn-1)/2$ $C_m$-factors or $(mn-1)/2$ $C_n$-factors of the whole design, as appropriate.
We can resolve each of the $r$ $C_m[n]$-factors into $\gamma_i$ $C_m$-factors and $\delta_i$ $C_n$-factors, where $\gamma_i + \delta_i = n$, where $\gamma=\sum_{i=1}^r\gamma_i$ is the number of classes left to fill with $C_m$-factors.

If $\alpha < (mn-1)/2$, set $\gamma=\alpha$ and fill the holes with $C_n$-factors, i.e.\ an HW$(mn; m,n; 0, (mn-1)/2)$.
Otherwise, when $\alpha \geq (mn-1)/2$, set $\gamma = \alpha - (mn-1)/2$ and fill the holes with $C_m$-factors, i.e.\ an HW$(mn; m, n; (mn-1)/2, 0)$.
Noting that $\gamma < mn(t-1)/2$, write $\gamma = xn + y$, with $0 \leq x < r$ and $0 \leq y < n$.

If the conditions of Theorem~\ref{C_m[n]} are satisfied for $y$, i.e.\ $(y,n-y) \in \mathrm{HWP}(C_m[n];m,n)$, we fill $x$ $C_m[n]$-factors with an HW$(C_m[n]; m, n;  n, 0)$, one $C_m[n]$-factor with an HW$(C_m[n]; m, n; y,n-y)$ and the rest with an HW$(C_m[n]; m, n; 0, n)$.

Otherwise, the conditions of Theorem~\ref{C_m[n]} are not satisfied for $y$, so that $y \in \{2,4,n-1,n-3\}$ or $(m,n,y)= (3,11,6), (3,13,8)$ (we are supposing $(m,n)\neq (3,15)$).
If $x \leq r-2$, we fill $x$ $C_m[n]$-factors with an HW$(C_m[n]; m, n; n, 0)$, one with an HW$(C_m[n]; m, n; 1,n-1)$, one with an HW$(C_m[n]; m, n; y-1, n-y+1)$ and the remaining $r-x-2$ with an $\mathrm{HW}(C_m[n];m,n;0,n)$.

Otherwise, we have that $x=r-1$.  Note that since $\beta \neq 1, 3$ by assumption, it follows that $y \neq n-1, n-3$, and so $y\in \{2,4\}$, or $(m,n,y) = (3,11,6), (3,13,8)$.
If $n \geq 11$ and $y = 2,4$, then we fill $x-1$ $C_m[n]$-factors with an HW$(C_m[n]; m, n; n, 0)$, one with an HW$(C_m[n]; m, n; n-5,5)$ (Theorem~\ref{C_m[n]}, $n-5>4$) and one with an HW$(C_m[n]; m, n; y+5, n-y-5)$ (Theorem~\ref{C_m[n]}, $y+5 = 7,9$).
Since we are assuming $(m,n) \neq (3,5)$, $(3,7)$, $(3,9)$, we are left with the case where $x=r-1$ and $(m,n,y) = (3,11,6), (3,13,8)$, or $(m, n) =  (5,7), (5,9), (7,9)$ and $y = 2,4$.
These correspond to the cases $(m,n,\beta) = (3,11,5), (3,13,5)$, or $(m, n) =  (5,7), (5,9), (7,9)$ and $\beta = n-2$ or $n-4$.

When $(m,n,\beta) = (5,7,3), (5,7,5)$ or $(3,11,5)$, $\beta = (n-1)/2$ or $(n+3)/2$, and so there exists an HW$(mn; m, n; (mn-1)/2-\beta,\beta)$ by Theorem~\ref{HW t=1}. We use this design to fill the $t$ uncovered holes of size $mn$ of the original $C_m$-factorization of $K_t[m]$, instead of a $C_m$-factorization. Now, we fill the expanded classes with $\mathrm{HW}(C_m[n];m,n;n,0)$.
This leaves the exceptions $(m,n,\beta) = (5,9,5), (5,9,7), (7,9,5), (7,9,7)$ and $(3,13,5)$.
\end{proof}

The results of Theorems~\ref{HW t=1} and \ref{HW t>1} together prove Theorem~\ref{main}.

In the case that the cycle lengths $m$ and $n$ are relatively prime, in order for $\mathrm{HW}(v;m,n;\alpha,\beta)$ to exist, it is necessary that $v$ is a multiple of $mn$.  We thus have the following corollary.
\begin{cor}
Let $m$ and $n$ be coprime odd integers with $n \geq m \geq 3$.  If $v \geq 3$ is an odd integer, then $(\alpha,\beta) \in \mathrm{HWP}(v;m,n)$ if and only if $v$ is divisible by $mn$, $\alpha \geq 0$ and $\beta \geq 0$, and $\alpha+\beta = (mnt-1)/2$, except possibly if:
\begin{itemize}
 \item $t>1$, and $\beta \in \{1,3\}$ or $(m,n,\beta) = (5,9,5)$, $(5,9,7)$, $(7,9,5)$, $(7,9,7)$, $(3,13,5)$. 
\item $t=1$ and one of the following conditions hold:  $\beta \in [1, \ldots, \frac{n-3}{2}] \cup \{\frac{n+1}{2}, \frac{n+5}{2}\}$, $(m,\alpha)=(3,2), (3,4)$, or $(m,n,\alpha,\beta) = (3,11,6,10)$, $(3,13,8,10)$, $(5,7,9,8)$, $(5,9,11,11)$, $(5,9,13,9)$, $(7,9,20,11)$, $(7,9,22,9)$.
\end{itemize}
\end{cor}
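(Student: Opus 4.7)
The plan is to reduce the corollary directly to Theorem~\ref{main} by using the coprimality hypothesis to pin down the relationship between $v$ and $mn$. First I would establish necessity of the divisibility: any $C_m$-factor on $v$ vertices requires $m \mid v$, and similarly any $C_n$-factor requires $n \mid v$. Since $\gcd(m,n)=1$, these together force $mn \mid v$, so we may write $v = mnt$ for some positive integer $t$. Because $m$ and $n$ are both odd, $mn$ is odd, and hence the requirement that $v$ be odd is equivalent to $t$ being odd.

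Next, the standard degree-counting argument for 2-factorizations of $K_v$ gives $\alpha + \beta = (v-1)/2 = (mnt-1)/2$, and $\alpha,\beta \geq 0$ is built into the definition of HWP. This completes the proof that the three listed numerical conditions ($mn \mid v$, nonnegativity, $\alpha+\beta = (mnt-1)/2$) are necessary.

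For sufficiency, I would simply invoke Theorem~\ref{main} with this value of $t$: it asserts that $(\alpha,\beta) \in \mathrm{HWP}(mnt;m,n)$ whenever $t$ is odd, $\alpha,\beta \geq 0$, and $\alpha+\beta = (mnt-1)/2$, with a precisely specified list of possible exceptions split into the $t=1$ and $t>1$ regimes. Transcribing that exception list into the statement of the corollary gives exactly the two bullet points.

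There is no real obstacle here; the corollary is essentially a specialization of Theorem~\ref{main} under the extra assumption $\gcd(m,n)=1$, whose only effect is to collapse the divisibility conditions $m\mid v$ and $n\mid v$ into the single condition $mn\mid v$, and thereby force $v$ to be of the form $mnt$ that Theorem~\ref{main} already handles. The one minor point to state clearly is that when $v$ is not a multiple of $mn$ no factorization exists (so the ``only if'' direction includes ruling out these $v$), which is immediate from $\mathrm{lcm}(m,n)=mn$ under coprimality.
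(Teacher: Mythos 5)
Your proposal is correct and is essentially the paper's own argument: coprimality collapses $m\mid v$ and $n\mid v$ into $mn\mid v$, so $v=mnt$ with $t$ odd, and the result is then a direct specialization of the main existence theorem with its exception list carried over. The only caveat is that the corollary's $t=1$ exception list (seven quadruples, including $(5,9,11,11)$ and $(7,9,20,11)$) matches Theorem~\ref{HW t=1} rather than the shorter list printed in Theorem~\ref{main}, so it is cleaner to cite Theorems~\ref{HW t=1} and \ref{HW t>1} directly when transcribing the exceptions.
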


Finally, we note that by not filling the holes in the proof of Theorem~\ref{HW t>1} above, we have the following result on factorization of the complete multipartite graph.
\begin{cor}
\label{multipartite}
Let $n$ and $m$ be odd integers with $n\geq m \geq 3$.  Then $(\alpha, \beta)\in {\rm HWP}(K_t[mn]; m, n)$ if and only if $t>1$ is odd, $\alpha, \beta \geq 0$ and $\alpha+\beta = mn(t-1)/2$, except possibly if $\beta = 1$ or $3$ or  $(m,n,\beta) = (5,7,3)$, $(5,7,5)$ $(5,9,5)$, $(5,9,7)$, $(7,9,5)$, $(7,9,7)$, $(3,11,5)$, $(3,13,5)$.
\end{cor}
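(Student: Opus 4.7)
The plan is to argue necessity from Theorem~\ref{Liu} and an edge count, then establish sufficiency by repeating the construction of Theorem~\ref{HW t>1} and stopping short of the hole-filling step. Each vertex of $K_t[mn]$ has degree $mn(t-1)$, which is even iff $t$ is odd (as $m,n$ are odd); dividing the edge count by the number of edges in a $2$-factor yields $\alpha+\beta=mn(t-1)/2$.

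For sufficiency, apply Theorem~\ref{Liu} to obtain a $C_m$-factorization of $K_t[m]$ with $r=m(t-1)/2$ factors, and blow up each vertex by $n$ to produce a $C_m[n]$-factorization of $K_t[m][n]\cong K_t[mn]$. By Theorem~\ref{C_m[n]}, each of the $r$ $C_m[n]$-factors can be independently resolved into $\gamma_i$ $C_m$-factors and $n-\gamma_i$ $C_n$-factors, provided $\gamma_i\in\{0,\ldots,n\}\setminus E_{m,n}$, where $E_{m,n}$ collects the forbidden values $\{2,4,n-3,n-1\}$ of Theorem~\ref{C_m[n]}, together with the sporadic value (if any) in $\{(3,11,6),(3,13,8),(3,15,8)\}$. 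It therefore suffices, on writing $\alpha=xn+y$ with $0\leq y<n$ and $0\leq x\leq r$, to realize $\alpha$ as a sum of $r$ values from $\{0,\ldots,n\}\setminus E_{m,n}$.

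If $y\notin E_{m,n}$, take $x$ slots of value $n$, one of value $y$, and $r-x-1$ of value $0$. If $y\in E_{m,n}$ and $x\leq r-2$, replace the bad slot with two slots of values $1$ and $y-1$ (or analogous swaps for the sporadic $y$); a direct check shows both replacement values lie outside $E_{m,n}$. The hard case is $x=r-1$, where only two slots remain to carry $n+y$ units of $\alpha$. The choices $y\in\{n-1,n-3\}$ force $\beta\in\{1,3\}$, accounting for those exception families. For $y\in\{2,4\}$ with $n\geq 11$ the pair of values $(n-5,y+5)$ sums to $n+y$ and both lie outside $E_{m,n}$; for the sporadic $(m,n,y)=(3,15,8)$ the pair $(10,13)$ works. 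In the remaining subcases—namely $y\in\{2,4\}$ with $(m,n)\in\{(5,7),(5,9),(7,9)\}$, and $(m,n,y)\in\{(3,11,6),(3,13,8)\}$—a short parity-and-range enumeration of pairs in $\{0,\ldots,n\}\setminus E_{m,n}$ summing to $n+y$ shows no valid split exists; these correspond, via $\beta=n-y$, to exactly the listed $(m,n,\beta)$ exceptions.

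The principal obstacle is the $x=r-1$ case: with no spare slots to absorb an exceptional $y$, we must either find a valid two-term decomposition of $n+y$ in $\{0,\ldots,n\}\setminus E_{m,n}$ or add the case to the exception list. The dichotomy between resolvable and genuinely-exceptional $(m,n,y)$ is small but delicate, and verifying it by finite case check is the main technical step of the proof.
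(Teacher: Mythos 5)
Your construction is exactly the paper's: the published proof obtains this corollary in a single sentence, by taking the proof of Theorem~\ref{HW t>1} (the $C_m$-factorization of $K_t[m]$ from Theorem~\ref{Liu}, blown up by $n$, with each of the $r=m(t-1)/2$ resulting $C_m[n]$-factors resolved via Theorem~\ref{C_m[n]}) and simply not filling the $t$ holes of size $mn$. Your necessity argument and your case analysis on $y$ reproduce that proof, and identifying the extra exceptions $(5,7,3)$, $(5,7,5)$, $(3,11,5)$ as precisely the cases that Theorem~\ref{HW t>1} rescued by hole-filling is the right way to see where the longer exception list comes from.

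One step, however, does not hold as written (and the same slip occurs in the paper's proof of Theorem~\ref{HW t>1}): for $(m,n)=(3,11)$ and $y\in\{2,4\}$ you assert that the pair $(n-5,y+5)=(6,7)$ or $(6,9)$ has both entries outside $E_{m,n}$, but $(3,11,6)$ is itself a possible exception of Theorem~\ref{C_m[n]}, so $6\in E_{3,11}$. In fact no redistribution over the $r$ slots works here: the admissible slot values are $\{0,1,3,5,7,9,11\}$, and since $\sum_i(11-v_i)=\beta$ with each $11-v_i\in\{0,2,4,6,8,10,11\}$, reaching $\beta=7$ or $9$ would require summing elements of that set to an odd number smaller than $11$, which is impossible. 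So $(3,11,\beta)$ with $\beta=7,9$ cannot be realized by this method and would have to join the exception list; in Theorem~\ref{HW t>1} these cases can still be rescued by filling the holes with a suitable HW$(33;3,11;\alpha',\beta')$ from Theorem~\ref{HW t=1}, but that escape is unavailable in the multipartite setting. Apart from this inherited wrinkle, your argument matches the paper's.
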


In a similar manner we may obtain solutions to HWP$(K_m[n]; m,n)$ from Theorem~\ref{HW t=1} by not filling in the parts in that Theorem.
We can then obtain further multipartite results on $K_{tm}[n]$ by filling in the parts in Corollary~\ref{multipartite} with these factorizations.

\section{Conclusion}

The Hamilton-Waterloo problem has received much attention over the last few years. While progress for pairs of odd factors has been slow up to now, we have shown that solutions exist for many cases. Indeed, if $m$ and $n$ are coprime, Theorem \ref{HW t>1} shows sufficiency when $v>mn$, $\beta \neq 1,3$ with one possible exception. For the case $v=mn$, Theorem~\ref{HW t=1} shows a similar result, but leaves the cases where $\beta < (n-3)/2$, $(n+1)/2$, $(n+5)/2$ as well as $\alpha \in \{2,4\}$ when $m=3$, with a few other small exceptions. It would be nice to see these exceptions closed. Of particular note is the case $\beta=1$; Theorem~\ref{beta=1} shows that in general the method that we have used, factoring $C_m[n]$, will not be usable in this case. We note that in the case of even cycles there is a similar problem when there is only one factor of a given type.

Whilst we have shown sufficiency when $m$ and $n$ are coprime, when  $m$ and $n$ are not coprime it is possible to have it is possible to have solutions where the number of points, $v$, is not a multiple of $mn$. Specifically, if $\ell = $lcm$(m,n)$, then in order for an $HW(v;m,n)$ to exist, $v$ must be a multiple of $\ell$. Except when $\beta=1,3$ or $v=mn$, we have covered every case where $v$ is a multiple of $mn$. This leaves the cases when $v$ a multiple of $\ell$, but is not divisible by $mn$.
Investigation of these cases would be of interest. 

For odd length cycles, we have only considered uniform factors. Almost nothing is known for non-uniform factors, in stark contrast to the even case.
The Hamilton-Waterloo problem also remains largely open when the factors have opposite parity. In addition, there is the more general Oberwolfach problem, when there are more than two factor types.

Theorem~\ref{C_m[n]} is a very important result in its own right. These factorizations are the cycle equivalent of group divisible designs in the design context and are likely to prove useful in solving a wide range of cycle factorization problems.

\section{Acknowledgements}

The first and second authors gratefully acknowledge the support of the NSERC Discovery grant program.

\begin{appendices}
\section{{Factoring} $C_m[n]$, $\alpha\equiv 2\pmod{6}$, $26 < \alpha < 224$}
\label{2 mod 6 Appendix}

For each value of $\alpha\equiv 2\pmod{6}$, with $26 < \alpha < 170$, we give $a$ and $b$ and $(\alpha-8)/6$ triples which, together with their negatives, satisfy the conditions of Lemma~\ref{gen 2}.\\

{\small
\noindent
{\boldmath $\alpha = 32$}, $a = 4$, $b = 1$, 
$\{ 3, 11, -14 \}$, $\{ 5, 13, -18 \}$, $\{ 6, 9, -15 \}$, $\{ 7, 10, -17 \}$. 

\noindent
{\boldmath $\alpha = 38$}, $a = 5$, $b = 1$,  
$\{ 3, 16, -19 \}$, $\{ 4, 13, -17 \}$, $\{ 6, 8, -14 \}$, $\{ 7, 11, -18 \}$, $\{ 9, 12, -21 \}$. 

\noindent
{\boldmath $\alpha = 44$}, $a = 3$, $b = 8$, 
$\{ 1, 13, -14 \}$, $\{ 2, 17, -19 \}$, $\{ 4, 20, -24 \}$, $\{ 5, 18, -23 \}$, $\{ 7, 15, -22 \}$, $\{ 10, 11, -21 \}$. 

\noindent
{\boldmath $\alpha = 50$}, $a = 3$, $b = 8$, 
$\{ 1, 14, -15 \}$, $\{ 2, 20, -22 \}$, $\{ 4, 19, -23 \}$, $\{ 5, 21, -26 \}$, $\{ 7, 18, -25 \}$, $\{ 10, 17, -27 \}$, $\{ 11, 13, -24 \}$. 

\noindent
{\boldmath $\alpha = 56$}, $a = 5$, $b = 1$, 
$\{ 3, 14, -17 \}$, $\{ 4, 21, -25 \}$, $\{ 6, 23, -29 \}$, $\{ 7, 19, -26 \}$, $\{ 8, 22, -30 \}$, $\{ 9, 18, -27 \}$, $\{ 11, 13, -24 \}$, $\{ 12, 16, -28 \}$. 

\noindent
{\boldmath $\alpha = 62$}, $a = 5$, $b = 8$, 
$\{ 1, 6, -7 \}$, $\{ 2, 24, -26 \}$, $\{ 3, 25, -28 \}$, $\{ 4, 23, -27 \}$, $\{ 9, 21, -30 \}$, $\{ 11, 18, -29 \}$, $\{ 12, 22, -34 \}$, $\{ 13, 19, -32 \}$, $\{ 14, 17, -31 \}$. 

\noindent
{\boldmath $\alpha = 68$}, $a = 5$, $b = 2$, 
$\{ 17, -31, 14 \}$, $\{ -30, 11, 19 \}$, $\{ 6, 18, -24 \}$, $\{ 29, -32, 3 \}$, $\{ -35, 8, 27 \}$, $\{ 9, 25, -34 \}$, $\{ 26, -33, 7 \}$, $\{ -28, 12, 16 \}$, $\{ -36, 13, 23 \}$, $\{ 1, 21, -22 \}$. 

\noindent
{\boldmath $\alpha = 74$}, $a = 5$, $b = 8$, 
$\{ 1, 11, -12 \}$, $\{ 2, 24, -26 \}$, $\{ 3, 28, -31 \}$, $\{ 4, 29, -33 \}$, $\{ 6, 30, -36 \}$, $\{ 7, 27, -34 \}$, $\{ 9, 23, -32 \}$, $\{ 13, 22, -35 \}$, $\{ 14, 25, -39 \}$, $\{ 17, 21, -38 \}$, $\{ 18, 19, -37 \}$. 

\noindent
{\boldmath $\alpha = 80$}, $a = 3$, $b = 1$,  
$\{ 23, 14, -37 \}$, $\{ -42, 25, 17 \}$, $\{ 11, 24, -35 \}$, $\{ 7, 27, -34 \}$, $\{ 26, 4, -30 \}$, $\{ -31, 16, 15 \}$, $\{ 13, 19, -32 \}$, $\{ 8, -36, 28 \}$, $\{ -41, 20, 21 \}$, $\{ 18, 22, -40 \}$, $\{ 29, -39, 10 \}$, $\{ 33, -38, 5 \}$. 

\noindent
{\boldmath $\alpha = 86$}, $a = 3$, $b = 1$, 
$\{ 20, -41, 21 \}$, $\{ -39, 25, 14 \}$, $\{ 22, -40, 18 \}$, $\{ 15, 30, -45 \}$, $\{ 13, 31, -44 \}$, $\{ 35, -43, 8 \}$, $\{ -36, 32, 4 \}$, $\{ 24, -29, 5 \}$, $\{ -42, 19, 23 \}$, $\{ 26, -37, 11 \}$, $\{ 27, -34, 7 \}$, $\{ 17, -33, 16 \}$, $\{ 28, -38, 10 \}$. 

\noindent
{\boldmath $\alpha = 92$}, $a = 3$, $b = 2$, 
$\{ 37, -38, 1 \}$, $\{ 31, -36, 5 \}$, $\{ -48, 34, 14 \}$, $\{ -46, 33, 13 \}$, $\{ -39, 11, 28 \}$, $\{ -44, 18, 26 \}$, $\{ 17, 23, -40 \}$, $\{ 35, 7, -42 \}$, $\{ 30, 15, -45 \}$, $\{ 8, 21, -29 \}$, $\{ 19, 24, -43 \}$, $\{ -47, 27, 20 \}$, $\{ -41, 16, 25 \}$, $\{ 22, 10, -32 \}$. 

\noindent
{\boldmath $\alpha = 98$}, $a = 7$, $b = 16$, 
$\{ 38, -48, 10 \}$, $\{ 43, 4, -47 \}$, $\{ 6, 12, -18 \}$, $\{ 30, 3, -33 \}$, $\{ 19, 22, -41 \}$, $\{ 8, 23, -31 \}$, $\{ 34, 17, -51 \}$, $\{ 9, -49, 40 \}$, $\{ 26, -39, 13 \}$, $\{ 20, -44, 24 \}$, $\{ 1, 45, -46 \}$, $\{ 15, 35, -50 \}$, $\{ 37, -42, 5 \}$, $\{ 27, 2, -29 \}$, $\{ 11, 25, -36 \}$. 

\noindent
{\boldmath $\alpha = 104$}, $a = 5$, $b = 1$,  
$\{ 32, -53, 21 \}$, $\{ 11, 27, -38 \}$, $\{ -39, 30, 9 \}$, $\{ 13, -46, 33 \}$, $\{ 19, -43, 24 \}$, $\{ -47, 25, 22 \}$, $\{ -40, 28, 12 \}$, $\{ 14, 17, -31 \}$, $\{ -54, 36, 18 \}$, $\{ 34, 3, -37 \}$, $\{ 23, -49, 26 \}$, $\{ -51, 7, 44 \}$, $\{ 16, 29, -45 \}$, $\{ 8, -50, 42 \}$, $\{ 35, -41, 6 \}$, $\{ 48, -52, 4 \}$. 

\noindent
{\boldmath $\alpha = 110$}, $a = 3$, $b = 1$, 
$\{ 20, 35, -55 \}$, $\{ 17, -45, 28 \}$, $\{ 30, -52, 22 \}$, $\{ 8, 21, -29 \}$, $\{ 11, 25, -36 \}$, $\{ 5, 38, -43 \}$, $\{ 23, 27, -50 \}$, $\{ -48, 16, 32 \}$, $\{ 26, 31, -57 \}$, $\{ 41, -56, 15 \}$, $\{ 34, 19, -53 \}$, $\{ 40, -54, 14 \}$, $\{ 33, -46, 13 \}$, $\{ 18, -42, 24 \}$, $\{ 10, 39, -49 \}$, $\{ 37, -44, 7 \}$, $\{ 47, -51, 4 \}$. 

\noindent
{\boldmath $\alpha = 116$}, $a = 7$, $b = 16$,  
$\{ 46, 3, -49 \}$, $\{ -58, 48, 10 \}$, $\{ 39, 4, -43 \}$, $\{ 41, -50, 9 \}$, $\{ 42, -59, 17 \}$, $\{ 36, 18, -54 \}$, $\{ 13, 20, -33 \}$, $\{ -55, 44, 11 \}$, $\{ -60, 15, 45 \}$, $\{ 12, 35, -47 \}$, $\{ 6, 34, -40 \}$, $\{ 26, 31, -57 \}$, $\{ 30, -38, 8 \}$, $\{ 19, 37, -56 \}$, $\{ 2, 23, -25 \}$, $\{ 51, -52, 1 \}$, $\{ 24, 29, -53 \}$, $\{ 5, 22, -27 \}$. 

\noindent
{\boldmath $\alpha = 122$}, $a = 7$, $b = 16$,  
$\{ 61, 22, -39 \}$, $\{ 50, -63, 13 \}$, $\{ 38, 9, -47 \}$, $\{ 18, 35, -53 \}$, $\{ 1, 4, -5 \}$, $\{ 23, 25, -48 \}$, $\{ -45, 34, 11 \}$, $\{ 52, -54, 2 \}$, $\{ -46, 26, 20 \}$, $\{ 6, -62, 56 \}$, $\{ 19, 36, -55 \}$, $\{ 33, 10, -43 \}$, $\{ 31, -60, 29 \}$, $\{ -51, 27, 24 \}$, $\{ 41, -58, 17 \}$, $\{ 44, -59, 15 \}$, $\{ 12, 30, -42 \}$, $\{ 49, -57, 8 \}$, $\{ 37, 3, -40 \}$. 

\noindent
{\boldmath $\alpha = 128$}, $a = 3$, $b = 1$,  
$\{ 4, 33, -37 \}$, $\{ 35, 29, -64 \}$, $\{ -52, 25, 27 \}$, $\{ -51, 10, 61 \}$, $\{ 17, 32, -49 \}$, $\{ 15, -54, 39 \}$, $\{ -55, 14, 41 \}$, $\{ 40, -60, 20 \}$, $\{ -44, 31, 13 \}$, $\{ -45, 22, 23 \}$, $\{ -63, 16, 47 \}$, $\{ 11, 42, -53 \}$, $\{ 7, 43, -50 \}$, $\{ -56, 26, 30 \}$, $\{ 57, -62, 5 \}$, $\{ 34, 24, -58 \}$, $\{ 46, -65, 19 \}$, $\{ -66, 48, 18 \}$, $\{ 21, 38, -59 \}$, $\{ 8, 28, -36 \}$. 

\noindent
{\boldmath $\alpha = 134$}, $a = 3$, $b = 1$, 
$\{ 5, -33, 38 \}$, $\{ 24, 30, -54 \}$, $\{ 49, -53, 4 \}$, $\{ 52, 11, -63 \}$, $\{ 18, 21, -39 \}$, $\{ 15, -59, 44 \}$, $\{ 28, -55, 27 \}$, $\{ 13, 34, -47 \}$, $\{ 37, 19, -56 \}$, $\{ -62, 45, 17 \}$, $\{ 41, 7, -48 \}$, $\{ 40, -69, 29 \}$, $\{ 22, 43, -65 \}$, $\{ 25, 32, -57 \}$, $\{ 10, 51, -61 \}$, $\{ -64, 14, 50 \}$, $\{ 8, 60, -68 \}$, $\{ 20, -66, 46 \}$, $\{ 23, 35, -58 \}$, $\{ 31, 36, -67 \}$, $\{ 16, 26, -42 \}$. 

\noindent
{\boldmath $\alpha = 140$}, $a = 7$, $b = 16$, 
$\{ 22, -52, 30 \}$, $\{ -54, 41, 13 \}$, $\{ 15, -62, 47 \}$, $\{ 20, 45, -65 \}$, $\{ 53, -57, 4 \}$, $\{ 37, -49, 12 \}$, $\{ 55, 1, -56 \}$, $\{ 46, -63, 17 \}$, $\{ 35, -58, 23 \}$, $\{ 25, 26, -51 \}$, $\{ 2, 27, -29 \}$, $\{ 66, -71, 5 \}$, $\{ 34, -70, 36 \}$, $\{ -67, 48, 19 \}$, $\{ 50, -61, 11 \}$, $\{ 42, 18, -60 \}$, $\{ 59, -68, 9 \}$, $\{ -64, 24, 40 \}$, $\{ 69, 3, -72 \}$, $\{ 33, -43, 10 \}$, $\{ 38, -44, 6 \}$, $\{ 31, 8, -39 \}$. 

\noindent
{\boldmath $\alpha = 146$}, $a = 7$, $b = 16$, 
$\{ 38, -73, 35 \}$, $\{ 55, -60, 5 \}$, $\{ 62, -68, 6 \}$, $\{ 39, 33, -72 \}$, $\{ -69, 24, 45 \}$, $\{ 30, 40, -70 \}$, $\{ -74, 11, 63 \}$, $\{ 27, 31, -58 \}$, $\{ 18, 43, -61 \}$, $\{ 46, -75, 29 \}$, $\{ 17, 25, -42 \}$, $\{ 20, 2, -22 \}$, $\{ -67, 13, 54 \}$, $\{ 23, 36, -59 \}$, $\{ -71, 19, 52 \}$, $\{ 49, 15, -64 \}$, $\{ 12, 44, -56 \}$, $\{ 8, 26, -34 \}$, $\{ 4, 53, -57 \}$, $\{ 10, 37, -47 \}$, $\{ 65, -66, 1 \}$, $\{ 3, 48, -51 \}$, $\{ 9, 41, -50 \}$. 

\noindent
{\boldmath $\alpha = 152$}, $a = 3$, $b = 1$, 
$\{ 16, 55, -71 \}$, $\{ 35, 37, -72 \}$, $\{ -78, 18, 60 \}$, $\{ 59, 8, -67 \}$, $\{ 50, -57, 7 \}$, $\{ 21, 40, -61 \}$, $\{ 43, -48, 5 \}$, $\{ 11, 63, -74 \}$, $\{ 14, 32, -46 \}$, $\{ -75, 44, 31 \}$, $\{ 49, 24, -73 \}$, $\{ 19, 4, -23 \}$, $\{ 41, 27, -68 \}$, $\{ -70, 42, 28 \}$, $\{ 54, 10, -64 \}$, $\{ 26, 39, -65 \}$, $\{ 22, 47, -69 \}$, $\{ 13, 45, -58 \}$, $\{ 25, 52, -77 \}$, $\{ 51, 34, -17 \}$, $\{ 56, -76, 20 \}$, $\{ 36, 30, -66 \}$, $\{ 33, 29, -62 \}$, $\{ 15, 38, -53 \}$. 

\noindent
{\boldmath $\alpha = 158$}, $a = 4$, $b = 1$, 
$\{ 22, 69, 47 \}$, $\{ 70, 79, 9 \}$, $\{ 20, 32, 52 \}$, $\{ 41, 71, 30 \}$, $\{ 17, 72, 55 \}$, $\{ 35, 38, 73 \}$, $\{ 63, 42, 21 \}$, $\{ 19, 48, 67 \}$, $\{ 40, 54, 14 \}$, $\{ 46, 57, 11 \}$, $\{ 56, 66, 10 \}$, $\{ 26, 13, 39 \}$, $\{ 64, 27, 37 \}$, $\{ 36, 45, 81 \}$, $\{ 49, 25, 74 \}$, $\{ 75, 31, 44 \}$, $\{ 51, 29, 80 \}$, $\{ 33, 43, 76 \}$, $\{ 24, 34, 58 \}$, $\{ 62, 77, 15 \}$, $\{ 5, 23, 28 \}$, $\{ 7, 61, 68 \}$, $\{ 18, 60, 78 \}$, $\{ 3, 50, 53 \}$, $\{ 6, 59, 65 \}$. 

\noindent
{\boldmath $\alpha = 164$}, $a = 7$, $b = 16$,  
$\{ 52, 20, -72 \}$, $\{ 76, -84, 8 \}$, $\{ 36, 42, -78 \}$, $\{ 69, -82, 13 \}$, $\{ 33, 47, -80 \}$, $\{ 29, 45, -74 \}$, $\{ 34, 39, -73 \}$, $\{ 79, -81, 2 \}$, $\{ 41, 9, -50 \}$, $\{ 37, 46, -83 \}$, $\{ -75, 31, 44 \}$, $\{ 24, 61, -85 \}$, $\{ 22, 35, -57 \}$, $\{ -70, 19, 51 \}$, $\{ 48, -60, 12 \}$, $\{ 4, 23, -27 \}$, $\{ -71, 54, 17 \}$, $\{ 26, -64, 38 \}$, $\{ 53, 3, -56 \}$, $\{ 55, -66, 11 \}$, $\{ 25, 43, -68 \}$, $\{ 58, 5, -63 \}$, $\{ 6, 59, -65 \}$, $\{ 10, 30, -40 \}$, $\{ 15, 62, -77 \}$, $\{ 49, -67, 18 \}$.
}

\noindent
{\boldmath $\alpha = 170$}, $a = 3$, $b = 16$,
$\{ 33, 46, 79 \}$, $\{ 66, 40, 26 \}$, $\{ 28, 47, 75 \}$, $\{ 25, 30, 55 \}$, $\{ 61, 7, 68 \}$, $\{ 20, 52, 72 \}$, $\{ 4, 38, 42 \}$, $\{ 53, 82, 29 \}$, $\{ 63, 74, 11 \}$, $\{ 1, 84, 85 \}$, $\{ 39, 44, 83 \}$, $\{ 70, 10, 80 \}$, $\{ 34, 23, 57 \}$, $\{ 13, 78, 65 \}$, $\{ 19, 2, 21 \}$, $\{ 51, 18, 69 \}$, $\{ 50, 81, 31 \}$, $\{ 86, 37, 49 \}$, $\{ 60, 87, 27 \}$, $\{ 36, 41, 77 \}$, $\{ 22, 45, 67 \}$, $\{ 24, 35, 59 \}$, $\{ 15, 58, 73 \}$, $\{ 43, 5, 48 \}$, $\{ 54, 71, 17 \}$, $\{ 76, 14, 62 \}$, $\{ 56, 8, 64 \}$.

\noindent
{\boldmath $\alpha = 176$}, $a = 4$, $b = 1$,
$\{ 7, 24, 31 \}$, $\{ 14, 64, 78 \}$, $\{ 83, 20, 63 \}$, $\{ 81, 9, 90 \}$, $\{ 42, 43, 85 \}$, $\{ 67, 13, 80 \}$, $\{ 18, 57, 75 \}$, $\{ 45, 29, 74 \}$, $\{ 32, 37, 69 \}$, $\{ 82, 22, 60 \}$, $\{ 26, 33, 59 \}$, $\{ 88, 58, 30 \}$, $\{ 76, 86, 10 \}$, $\{ 44, 55, 11 \}$, $\{ 51, 3, 54 \}$, $\{ 56, 41, 15 \}$, $\{ 87, 34, 53 \}$, $\{ 62, 79, 17 \}$, $\{ 73, 48, 25 \}$, $\{ 65, 27, 38 \}$, $\{ 84, 61, 23 \}$, $\{ 6, 46, 52 \}$, $\{ 35, 36, 71 \}$, $\{ 89, 50, 39 \}$, $\{ 21, 49, 70 \}$, $\{ 68, 28, 40 \}$, $\{ 72, 5, 77 \}$, $\{ 47, 66, 19 \}$.

\noindent
{\boldmath $\alpha = 182$}, $a = 3$, $b = 1$,
$\{ 40, 52, 92 \}$, $\{ 73, 8, 81 \}$, $\{ 28, 50, 78 \}$, $\{ 55, 90, 35 \}$, $\{ 18, 25, 43 \}$, $\{ 46, 23, 69 \}$, $\{ 61, 24, 85 \}$, $\{ 14, 70, 84 \}$, $\{ 5, 67, 72 \}$, $\{ 31, 34, 65 \}$, $\{ 29, 51, 80 \}$, $\{ 58, 13, 71 \}$, $\{ 47, 79, 32 \}$, $\{ 74, 36, 38 \}$, $\{ 17, 22, 39 \}$, $\{ 26, 62, 88 \}$, $\{ 19, 56, 75 \}$, $\{ 76, 27, 49 \}$, $\{ 16, 41, 57 \}$, $\{ 33, 20, 53 \}$, $\{ 15, 68, 83 \}$, $\{ 48, 59, 11 \}$, $\{ 42, 86, 44 \}$, $\{ 77, 10, 87 \}$, $\{ 93, 30, 63 \}$, $\{ 54, 91, 37 \}$, $\{ 60, 4, 64 \}$, $\{ 82, 7, 89 \}$, $\{ 21, 45, 66 \}$.

\noindent
{\boldmath $\alpha = 188$}, $a = 3$, $b = 2$, 
$\{ 10, 17, 27 \}$, $\{ 80, 13, 93 \}$, $\{ 43, 26, 69 \}$, $\{ 41, 96, 55 \}$, $\{ 60, 52, 8 \}$, $\{ 68, 18, 86 \}$, $\{ 42, 76, 34 \}$, $\{ 14, 59, 73 \}$, $\{ 94, 48, 46 \}$, $\{ 45, 61, 16 \}$, $\{ 23, 28, 51 \}$, $\{ 88, 67, 21 \}$, $\{ 81, 44, 37 \}$, $\{ 58, 87, 29 \}$, $\{ 31, 71, 40 \}$, $\{ 30, 79, 49 \}$, $\{ 35, 54, 89 \}$, $\{ 19, 83, 64 \}$, $\{ 57, 38, 95 \}$, $\{ 47, 25, 72 \}$, $\{ 65, 70, 5 \}$, $\{ 91, 7, 84 \}$, $\{ 78, 22, 56 \}$, $\{ 75, 36, 39 \}$, $\{ 32, 50, 82 \}$, $\{ 74, 85, 11 \}$, $\{ 77, 15, 92 \}$, $\{ 20, 33, 53 \}$, $\{ 66, 90, 24 \}$, $\{ 63, 62, 1 \}$.

\noindent
{\boldmath $\alpha = 194$}, $a = 3$, $b = 2$,
$\{ 5, 51, 56 \}$, $\{ 55, 71, 16 \}$, $\{ 73, 66, 7 \}$, $\{ 45, 80, 35 \}$, $\{ 23, 62, 85 \}$, $\{ 18, 19, 37 \}$, $\{ 96, 46, 50 \}$, $\{ 59, 30, 89 \}$, $\{ 17, 70, 87 \}$, $\{ 27, 40, 67 \}$, $\{ 75, 86, 11 \}$, $\{ 47, 83, 36 \}$, $\{ 79, 41, 38 \}$, $\{ 65, 26, 91 \}$, $\{ 53, 21, 74 \}$, $\{ 94, 81, 13 \}$, $\{ 72, 28, 44 \}$, $\{ 48, 90, 42 \}$, $\{ 10, 14, 24 \}$, $\{ 69, 77, 8 \}$, $\{ 33, 49, 82 \}$, $\{ 43, 52, 95 \}$, $\{ 88, 54, 34 \}$, $\{ 92, 29, 63 \}$, $\{ 58, 97, 39 \}$, $\{ 22, 76, 98 \}$, $\{ 84, 20, 64 \}$, $\{ 68, 31, 99 \}$, $\{ 60, 61, 1 \}$, $\{ 25, 32, 57 \}$, $\{ 78, 93, 15 \}$.

\noindent
{\boldmath $\alpha = 200$}, $a = 3$, $b = 1$
$\{ 77, 91, 14 \}$, $\{ 43, 49, 92 \}$, $\{ 83, 38, 45 \}$, $\{ 76, 23, 99 \}$, $\{ 90, 20, 70 \}$, $\{ 47, 88, 41 \}$, $\{ 42, 101, 59 \}$, $\{ 66, 85, 19 \}$, $\{ 71, 25, 96 \}$, $\{ 13, 54, 67 \}$, $\{ 98, 5, 93 \}$, $\{ 31, 50, 81 \}$, $\{ 56, 17, 73 \}$, $\{ 84, 55, 29 \}$, $\{ 62, 80, 18 \}$, $\{ 24, 65, 89 \}$, $\{ 28, 4, 32 \}$, $\{ 46, 68, 22 \}$, $\{ 35, 37, 72 \}$, $\{ 39, 100, 61 \}$, $\{ 74, 8, 82 \}$, $\{ 11, 86, 97 \}$, $\{ 57, 64, 7 \}$, $\{ 21, 27, 48 \}$, $\{ 10, 30, 40 \}$, $\{ 15, 60, 75 \}$, $\{ 78, 26, 52 \}$, $\{ 63, 16, 79 \}$, $\{ 95, 44, 51 \}$, $\{ 87, 34, 53 \}$, $\{ 69, 102, 33 \}$, $\{ 36, 94, 58 \}$.

\noindent
{\boldmath $\alpha = 206$}, $a = 3$, $b = 1$,
$\{ 38, 60, 98 \}$, $\{ 7, 54, 61 \}$, $\{ 97, 16, 81 \}$, $\{ 13, 24, 37 \}$, $\{ 99, 50, 49 \}$, $\{ 63, 55, 8 \}$, $\{ 64, 79, 15 \}$, $\{ 25, 43, 68 \}$, $\{ 83, 5, 88 \}$, $\{ 100, 18, 82 \}$, $\{ 104, 33, 71 \}$, $\{ 77, 87, 10 \}$, $\{ 30, 36, 66 \}$, $\{ 53, 4, 57 \}$, $\{ 45, 48, 93 \}$, $\{ 51, 86, 35 \}$, $\{ 40, 44, 84 \}$, $\{ 67, 27, 94 \}$, $\{ 21, 95, 74 \}$, $\{ 29, 62, 91 \}$, $\{ 17, 85, 102 \}$, $\{ 56, 78, 22 \}$, $\{ 11, 101, 90 \}$, $\{ 46, 65, 19 \}$, $\{ 69, 23, 92 \}$, $\{ 103, 31, 72 \}$, $\{ 34, 76, 42 \}$, $\{ 52, 28, 80 \}$, $\{ 58, 47, 105 \}$, $\{ 32, 41, 73 \}$, $\{ 26, 70, 96 \}$, $\{ 39, 20, 59 \}$, $\{ 75, 89, 14 \}$.

\noindent
{\boldmath $\alpha = 212$}, $a = 3$, $b = 2$,
$\{ 17, 42, 59 \}$, $\{ 56, 10, 66 \}$, $\{ 88, 24, 64 \}$, $\{ 1, 74, 75 \}$, $\{ 21, 80, 101 \}$, $\{ 55, 13, 68 \}$, $\{ 69, 96, 27 \}$, $\{ 92, 16, 108 \}$, $\{ 62, 103, 41 \}$, $\{ 51, 99, 48 \}$, $\{ 54, 19, 73 \}$, $\{ 46, 86, 40 \}$, $\{ 52, 77, 25 \}$, $\{ 94, 34, 60 \}$, $\{ 85, 20, 105 \}$, $\{ 97, 32, 65 \}$, $\{ 18, 5, 23 \}$, $\{ 90, 98, 8 \}$, $\{ 72, 61, 11 \}$, $\{ 38, 43, 81 \}$, $\{ 26, 58, 84 \}$, $\{ 37, 50, 87 \}$, $\{ 22, 71, 93 \}$, $\{ 106, 15, 91 \}$, $\{ 30, 70, 100 \}$, $\{ 76, 83, 7 \}$, $\{ 67, 95, 28 \}$, $\{ 39, 63, 102 \}$, $\{ 57, 104, 47 \}$, $\{ 107, 78, 29 \}$, $\{ 44, 79, 35 \}$, $\{ 53, 36, 89 \}$, $\{ 33, 49, 82 \}$, $\{ 14, 31, 45 \}$.

\noindent
{\boldmath $\alpha = 218$}, $a = 3$, $b = 2$,
$\{ 65, 105, 40 \}$, $\{ 35, 52, 87 \}$, $\{ 19, 84, 103 \}$, $\{ 96, 59, 37 \}$, $\{ 62, 30, 92 \}$, $\{ 69, 25, 94 \}$, $\{ 67, 24, 91 \}$, $\{ 15, 29, 44 \}$, $\{ 58, 39, 97 \}$, $\{ 32, 50, 82 \}$, $\{ 17, 100, 83 \}$, $\{ 108, 109, 1 \}$, $\{ 76, 42, 34 \}$, $\{ 104, 43, 61 \}$, $\{ 13, 101, 88 \}$, $\{ 99, 18, 81 \}$, $\{ 16, 38, 54 \}$, $\{ 36, 57, 93 \}$, $\{ 23, 45, 68 \}$, $\{ 63, 49, 14 \}$, $\{ 80, 90, 10 \}$, $\{ 20, 46, 66 \}$, $\{ 98, 28, 70 \}$, $\{ 106, 31, 75 \}$, $\{ 8, 71, 79 \}$, $\{ 26, 60, 86 \}$, $\{ 47, 27, 74 \}$, $\{ 78, 7, 85 \}$, $\{ 55, 111, 56 \}$, $\{ 53, 64, 11 \}$, $\{ 41, 48, 89 \}$, $\{ 73, 95, 22 \}$, $\{ 110, 33, 77 \}$, $\{ 102, 107, 5 \}$, $\{ 21, 51, 72 \}$.

\section{Factoring $C_m[n]$ $\alpha\equiv 4\pmod{6}$, $ 28 <\alpha < 268$}
\label{4 mod 6 Appendix}

For each value of $\alpha\equiv 4\pmod{6}$, with $28 < \alpha < 268$, we give $a$ and $b$ and $(\alpha-10)/6$ triples which, together with their negatives, satisfy the conditions of Lemma~\ref{gen 4}.

{\small
\noindent
{\boldmath $\alpha = 34$}, $a = 3$, $b = 8$, 
$\{ 1, 10, 11\}$, $\{ 2, 7, 9\}$, $\{ 4, 13, 17\}$, $\{ 5, 14, 19\}$. 

\noindent
{\boldmath $\alpha = 40$}, $a = 3$, $b = 1$, 
$\{ 4, 17, 21\}$, $\{ 5, 11, 16\}$, $\{ 7, 13, 20\}$, $\{ 8, 14, 22\}$, $\{ 9, 10, 19\}$. 

\noindent
{\boldmath $\alpha = 46$}, $a = 3$, $b = 1$, 
$\{ 4, 17, 21\}$, $\{ 5, 20, 25\}$, $\{ 7, 16, 23\}$, $\{ 8, 11, 19\}$, $\{ 9, 13, 22\}$, $\{ 10, 14, 24\}$. 

\noindent
{\boldmath $\alpha = 52$}, $a = 3$, $b = 2$, 
$\{ 1, 19, 20\}$, $\{ 5, 22, 27\}$, $\{ 7, 21, 28\}$, $\{ 8, 16, 24\}$, $\{ 9, 17, 26\}$, $\{ 10, 13, 23\}$, $\{ 11, 14, 25\}$. 

\noindent
{\boldmath $\alpha = 58$}, $a = 3$, $b = 2$, 
$\{ 1, 21, 22\}$, $\{ 5, 23, 28\}$, $\{ 7, 24, 31\}$, $\{ 8, 17, 25\}$, $\{ 9, 20, 29\}$, $\{ 10, 16, 26\}$, $\{ 11, 19, 30\}$, $\{ 13, 14, 27\}$. 

\noindent
{\boldmath $\alpha = 64$}, $a = 3$, $b = 1$, 
$\{ 4, 21, 25\}$, $\{ 5, 22, 27\}$, $\{ 7, 24, 31\}$, $\{ 8, 26, 34\}$, $\{ 9, 20, 29\}$, $\{ 10, 23, 33\}$, $\{ 11, 17, 28\}$, $\{ 13, 19, 32\}$, $\{ 14, 16, 30\}$. 

\noindent
{\boldmath $\alpha = 70$}, $a = 3$, $b = 1$, 
$\{ 4, 22, 26\}$, $\{ 5, 24, 29\}$, $\{ 7, 23, 30\}$, $\{ 8, 27, 35\}$, $\{ 9, 28, 37\}$, $\{ 10, 21, 31\}$, $\{ 11, 25, 36\}$, $\{ 13, 19, 32\}$, $\{ 14, 20, 34\}$, $\{ 16, 17, 33\}$. 

\noindent
{\boldmath $\alpha = 76$}, $a = 3$, $b = 2$, 
$\{ 13, 24, 37\}$, $\{ 14, 22, 36\}$, $\{ 31, 9, 40\}$, $\{ 17, 21, 38\}$, $\{ 10, 16, 26\}$, $\{ 35, 28, 7\}$, $\{ 29, 30, 1\}$, $\{ 39, 20, 19\}$, $\{ 34, 11, 23\}$, $\{ 8, 25, 33\}$, $\{ 27, 5, 32\}$. 

\noindent
{\boldmath $\alpha = 82$}, $a = 3$, $b = 2$, 
$\{ 27, 32, 5\}$, $\{ 19, 22, 41\}$, $\{ 42, 33, 9\}$, $\{ 34, 20, 14\}$, $\{ 35, 8, 43\}$, $\{ 17, 23, 40\}$, $\{ 24, 7, 31\}$, $\{ 25, 11, 36\}$, $\{ 10, 28, 38\}$, $\{ 26, 13, 39\}$, $\{ 21, 37, 16\}$, $\{ 29, 1, 30\}$. 

\noindent
{\boldmath $\alpha = 88$}, $a = 3$, $b = 1$, 
$\{ 46, 13, 33\}$, $\{ 17, 21, 38\}$, $\{ 39, 9, 30\}$, $\{ 45, 23, 22\}$, $\{ 19, 24, 43\}$, $\{ 14, 20, 34\}$, $\{ 42, 31, 11\}$, $\{ 28, 35, 7\}$, $\{ 41, 25, 16\}$, $\{ 10, 26, 36\}$, $\{ 8, 29, 37\}$, $\{ 27, 5, 32\}$, $\{ 40, 44, 4\}$.

\noindent
{\boldmath $\alpha = 94$}, $a = 3$, $b = 1$, 
$\{ 5, 32, 37\}$, $\{ 48, 25, 23\}$, $\{ 20, 26, 46\}$, $\{ 45, 31, 14\}$, $\{ 49, 13, 36\}$, $\{ 11, 28, 39\}$, $\{ 41, 24, 17\}$, $\{ 19, 21, 40\}$, $\{ 22, 30, 8\}$, $\{ 16, 27, 43\}$, $\{ 7, 35, 42\}$, $\{ 10, 44, 34\}$, $\{ 38, 47, 9\}$, $\{ 29, 33, 4\}$. 

\noindent
{\boldmath $\alpha = 100$}, $a = 3$, $b = 2$, 
$\{ 51, 38, 13\}$, $\{ 17, 25, 42\}$, $\{ 20, 23, 43\}$, $\{ 31, 47, 16\}$, $\{ 7, 29, 36\}$, $\{ 37, 11, 48\}$, $\{ 40, 14, 26\}$, $\{ 21, 28, 49\}$, $\{ 1, 33, 34\}$, $\{ 30, 22, 52\}$, $\{ 8, 24, 32\}$, $\{ 19, 27, 46\}$, $\{ 35, 10, 45\}$, $\{ 41, 50, 9\}$, $\{ 39, 44, 5\}$. 

\noindent
{\boldmath $\alpha = 106$}, $a = 3$, $b = 2$, 
$\{ 24, 29, 53\}$, $\{ 37, 13, 50\}$, $\{ 44, 54, 10\}$, $\{ 52, 19, 33\}$, $\{ 43, 5, 48\}$, $\{ 36, 45, 9\}$, $\{ 17, 22, 39\}$, $\{ 27, 7, 34\}$, $\{ 31, 47, 16\}$, $\{ 8, 32, 40\}$, $\{ 21, 35, 14\}$, $\{ 55, 30, 25\}$, $\{ 46, 20, 26\}$, $\{ 49, 11, 38\}$, $\{ 23, 51, 28\}$, $\{ 41, 1, 42\}$. 

\noindent
{\boldmath $\alpha = 112$}, $a = 3$, $b = 1$, 
$\{ 26, 29, 55\}$, $\{ 31, 48, 17\}$, $\{ 50, 57, 7\}$, $\{ 41, 49, 8\}$, $\{ 46, 51, 5\}$, $\{ 23, 37, 14\}$, $\{ 32, 52, 20\}$, $\{ 56, 40, 16\}$, $\{ 44, 33, 11\}$, $\{ 24, 30, 54\}$, $\{ 38, 42, 4\}$, $\{ 27, 36, 9\}$, $\{ 28, 53, 25\}$, $\{ 58, 39, 19\}$, $\{ 22, 43, 21\}$, $\{ 35, 45, 10\}$, $\{ 47, 13, 34\}$. 

\noindent
{\boldmath $\alpha = 118$}, $a = 3$, $b = 1$,
$\{ 41, 9, 50\}$, $\{ 4, 38, 42\}$, $\{ 5, 58, 53\}$, $\{ 31, 55, 24\}$, $\{ 43, 57, 14\}$, $\{ 49, 7, 56\}$, $\{ 27, 61, 34\}$, $\{ 35, 51, 16\}$, $\{ 46, 26, 20\}$, $\{ 30, 59, 29\}$, $\{ 8, 32, 40\}$, $\{ 13, 23, 36\}$, $\{ 22, 25, 47\}$, $\{ 37, 48, 11\}$, $\{ 17, 28, 45\}$, $\{ 60, 39, 21\}$, $\{ 54, 10, 44\}$, $\{ 52, 19, 33\}$. 

\noindent
{\boldmath $\alpha = 124$}, $a = 3$, $b = 4$, 
$\{ 49, 25, 24\}$, $\{ 10, 51, 61\}$, $\{ 44, 14, 58\}$, $\{ 11, 43, 54\}$, $\{ 17, 21, 38\}$, $\{ 22, 33, 55\}$, $\{ 30, 64, 34\}$, $\{ 52, 53, 1\}$, $\{ 23, 62, 39\}$, $\{ 47, 60, 13\}$, $\{ 50, 57, 7\}$, $\{ 27, 56, 29\}$, $\{ 31, 32, 63\}$, $\{ 20, 28, 48\}$, $\{ 9, 36, 45\}$, $\{ 19, 40, 59\}$, $\{ 5, 41, 46\}$, $\{ 26, 42, 16\}$, $\{ 2, 35, 37\}$. 

\noindent
{\boldmath $\alpha = 130$}, $a = 3$, $b = 2$, 
$\{ 46, 33, 13\}$, $\{ 19, 61, 42\}$, $\{ 30, 32, 62\}$, $\{ 29, 52, 23\}$, $\{ 51, 58, 7\}$, $\{ 1, 35, 36\}$, $\{ 26, 50, 24\}$, $\{ 55, 14, 41\}$, $\{ 54, 45, 9\}$, $\{ 21, 49, 28\}$, $\{ 39, 66, 27\}$, $\{ 40, 57, 17\}$, $\{ 16, 43, 59\}$, $\{ 8, 64, 56\}$, $\{ 53, 22, 31\}$, $\{ 60, 65, 5\}$, $\{ 11, 37, 48\}$, $\{ 63, 25, 38\}$, $\{ 10, 34, 44\}$, $\{ 67, 47, 20\}$. 

\noindent
{\boldmath $\alpha = 136$}, $a = 3$, $b = 1$, 
$\{ 44, 49, 5\}$, $\{ 55, 41, 14\}$, $\{ 53, 25, 28\}$, $\{ 21, 47, 68\}$, $\{ 13, 37, 50\}$, $\{ 39, 48, 9\}$, $\{ 19, 32, 51\}$, $\{ 10, 42, 52\}$, $\{ 62, 26, 36\}$, $\{ 61, 45, 16\}$, $\{ 11, 59, 70\}$, $\{ 31, 33, 64\}$, $\{ 7, 22, 29\}$, $\{ 35, 69, 34\}$, $\{ 67, 27, 40\}$, $\{ 38, 20, 58\}$, $\{ 57, 65, 8\}$, $\{ 17, 63, 46\}$, $\{ 24, 30, 54\}$, $\{ 43, 66, 23\}$, $\{ 4, 56, 60\}$. 

\noindent
{\boldmath $\alpha = 142$}, $a = 3$, $b = 1$, 
$\{ 31, 39, 70\}$, $\{ 45, 10, 55\}$, $\{ 67, 40, 27\}$, $\{ 5, 21, 26\}$, $\{ 16, 56, 72\}$, $\{ 50, 63, 13\}$, $\{ 30, 36, 66\}$, $\{ 68, 8, 60\}$, $\{ 57, 20, 37\}$, $\{ 4, 49, 53\}$, $\{ 7, 47, 54\}$, $\{ 22, 42, 64\}$, $\{ 32, 73, 41\}$, $\{ 51, 14, 65\}$, $\{ 17, 29, 46\}$, $\{ 9, 34, 43\}$, $\{ 61, 28, 33\}$, $\{ 23, 35, 58\}$, $\{ 25, 69, 44\}$, $\{ 24, 62, 38\}$, $\{ 11, 48, 59\}$, $\{ 52, 19, 71\}$. 

\noindent
{\boldmath $\alpha = 148$}, $a = 3$, $b = 4$, 
$\{ 64, 30, 34\}$, $\{ 71, 2, 73\}$, $\{ 19, 48, 67\}$, $\{ 59, 9, 68\}$, $\{ 25, 62, 37\}$, $\{ 43, 53, 10\}$, $\{ 11, 21, 32\}$, $\{ 14, 27, 41\}$, $\{ 51, 24, 75\}$, $\{ 44, 22, 66\}$, $\{ 38, 69, 31\}$, $\{ 5, 50, 55\}$, $\{ 20, 52, 72\}$, $\{ 76, 40, 36\}$, $\{ 23, 35, 58\}$, $\{ 28, 42, 70\}$, $\{ 17, 46, 63\}$, $\{ 26, 39, 65\}$, $\{ 29, 74, 45\}$, $\{ 16, 33, 49\}$, $\{ 54, 61, 7\}$, $\{ 60, 47, 13\}$, $\{ 56, 57, 1\}$,

\noindent
{\boldmath $\alpha = 154$}, $a = 3$, $b = 2$, 
$\{ 26, 45, 71\}$, $\{ 37, 39, 76\}$, $\{ 75, 16, 59\}$, $\{ 51, 17, 68\}$, $\{ 23, 50, 73\}$, $\{ 65, 25, 40\}$, $\{ 19, 53, 72\}$, $\{ 70, 29, 41\}$, $\{ 56, 42, 14\}$, $\{ 10, 74, 64\}$, $\{ 46, 20, 66\}$, $\{ 77, 44, 33\}$, $\{ 11, 27, 38\}$, $\{ 54, 63, 9\}$, $\{ 32, 60, 28\}$, $\{ 57, 5, 62\}$, $\{ 8, 61, 69\}$, $\{ 1, 34, 35\}$, $\{ 58, 79, 21\}$, $\{ 78, 47, 31\}$, $\{ 24, 43, 67\}$, $\{ 52, 30, 22\}$, $\{ 7, 48, 55\}$, $\{ 36, 49, 13\}$. 

\noindent
{\boldmath $\alpha = 160$}, $a = 3$, $b = 1$, 
$\{ 23, 58, 81\}$, $\{ 51, 28, 79\}$, $\{ 78, 61, 17\}$, $\{ 60, 5, 65\}$, $\{ 80, 35, 45\}$, $\{ 82, 10, 72\}$, $\{ 33, 21, 54\}$, $\{ 68, 32, 36\}$, $\{ 13, 37, 50\}$, $\{ 14, 55, 69\}$, $\{ 7, 59, 66\}$, $\{ 75, 46, 29\}$, $\{ 9, 47, 56\}$, $\{ 22, 49, 71\}$, $\{ 70, 62, 8\}$, $\{ 20, 64, 44\}$, $\{ 67, 40, 27\}$, $\{ 63, 25, 38\}$, $\{ 30, 11, 41\}$, $\{ 74, 43, 31\}$, $\{ 19, 76, 57\}$, $\{ 77, 53, 24\}$, $\{ 16, 26, 42\}$, $\{ 73, 34, 39\}$, $\{ 48, 4, 52\}$. 

\noindent
{\boldmath $\alpha = 166$}, $a = 3$, $b = 1$, 
$\{ 84, 47, 37\}$, $\{ 79, 57, 22\}$, $\{ 8, 50, 58\}$, $\{ 42, 51, 9\}$, $\{ 60, 16, 76\}$, $\{ 64, 83, 19\}$, $\{ 61, 34, 27\}$, $\{ 26, 45, 71\}$, $\{ 4, 63, 67\}$, $\{ 85, 23, 62\}$, $\{ 82, 29, 53\}$, $\{ 46, 81, 35\}$, $\{ 41, 66, 25\}$, $\{ 13, 17, 30\}$, $\{ 20, 52, 72\}$, $\{ 36, 38, 74\}$, $\{ 10, 39, 49\}$, $\{ 59, 70, 11\}$, $\{ 32, 33, 65\}$, $\{ 21, 77, 56\}$, $\{ 54, 78, 24\}$, $\{ 14, 55, 69\}$, $\{ 28, 40, 68\}$, $\{ 31, 75, 44\}$, $\{ 7, 80, 73\}$, $\{ 43, 48, 5\}$. 

\noindent
{\boldmath $\alpha = 172$}, $a = 3$, $b = 2$, 
$\{ 21, 32, 53\}$, $\{ 10, 20, 30\}$, $\{ 34, 80, 46\}$, $\{ 65, 28, 37\}$, $\{ 59, 45, 14\}$, $\{ 47, 8, 55\}$, $\{ 44, 71, 27\}$, $\{ 83, 31, 52\}$, $\{ 50, 16, 66\}$, $\{ 5, 49, 54\}$, $\{ 26, 62, 88\}$, $\{ 35, 42, 77\}$, $\{ 56, 29, 85\}$, $\{ 17, 70, 87\}$, $\{ 68, 69, 1\}$, $\{ 51, 76, 25\}$, $\{ 61, 72, 11\}$, $\{ 24, 39, 63\}$, $\{ 60, 22, 82\}$, $\{ 81, 23, 58\}$, $\{ 78, 40, 38\}$, $\{ 36, 79, 43\}$, $\{ 57, 7, 64\}$, $\{ 73, 13, 86\}$, $\{ 74, 33, 41\}$, $\{ 84, 9, 75\}$, $\{ 48, 19, 67\}$. 

\noindent
{\boldmath $\alpha = 178$}, $a = 3$, $b = 2$, 
$\{ 39, 40, 79\}$, $\{ 26, 47, 73\}$, $\{ 16, 90, 74\}$, $\{ 68, 20, 88\}$, $\{ 21, 41, 62\}$, $\{ 51, 19, 70\}$, $\{ 37, 38, 75\}$, $\{ 69, 64, 5\}$, $\{ 52, 65, 13\}$, $\{ 14, 81, 67\}$, $\{ 56, 57, 1\}$, $\{ 89, 59, 30\}$, $\{ 80, 34, 46\}$, $\{ 22, 71, 49\}$, $\{ 35, 42, 77\}$, $\{ 91, 25, 66\}$, $\{ 50, 82, 32\}$, $\{ 86, 23, 63\}$, $\{ 72, 83, 11\}$, $\{ 53, 8, 61\}$, $\{ 84, 60, 24\}$, $\{ 27, 28, 55\}$, $\{ 33, 78, 45\}$, $\{ 76, 9, 85\}$, $\{ 44, 10, 54\}$, $\{ 87, 29, 58\}$, $\{ 7, 36, 43\}$, $\{ 17, 31, 48\}$. 

\noindent
{\boldmath $\alpha = 184$}, $a = 3$, $b = 1$, 
$\{ 85, 34, 51\}$, $\{ 64, 13, 77\}$, $\{ 27, 93, 66\}$, $\{ 56, 80, 24\}$, $\{ 28, 45, 73\}$, $\{ 44, 60, 16\}$, $\{ 19, 52, 71\}$, $\{ 11, 14, 25\}$, $\{ 61, 84, 23\}$, $\{ 31, 47, 78\}$, $\{ 38, 88, 50\}$, $\{ 70, 41, 29\}$, $\{ 39, 4, 43\}$, $\{ 75, 26, 49\}$, $\{ 87, 8, 79\}$, $\{ 42, 72, 30\}$, $\{ 58, 91, 33\}$, $\{ 92, 55, 37\}$, $\{ 83, 35, 48\}$, $\{ 46, 36, 82\}$, $\{ 5, 54, 59\}$, $\{ 89, 69, 20\}$, $\{ 21, 65, 86\}$, $\{ 74, 7, 81\}$, $\{ 90, 22, 68\}$, $\{ 32, 62, 94\}$, $\{ 67, 76, 9\}$, $\{ 17, 40, 57\}$, $\{ 63, 53, 10\}$. 

\noindent
{\boldmath $\alpha = 190$}, $a = 3$, $b = 1$, 
$\{ 54, 13, 67\}$, $\{ 77, 85, 8\}$, $\{ 26, 65, 91\}$, $\{ 16, 25, 41\}$, $\{ 66, 70, 4\}$, $\{ 86, 97, 11\}$, $\{ 74, 5, 79\}$, $\{ 17, 44, 61\}$, $\{ 42, 47, 89\}$, $\{ 49, 63, 14\}$, $\{ 40, 76, 36\}$, $\{ 87, 57, 30\}$, $\{ 20, 48, 68\}$, $\{ 45, 88, 43\}$, $\{ 51, 21, 72\}$, $\{ 33, 71, 38\}$, $\{ 50, 78, 28\}$, $\{ 84, 9, 93\}$, $\{ 81, 59, 22\}$, $\{ 46, 75, 29\}$, $\{ 52, 62, 10\}$, $\{ 31, 95, 64\}$, $\{ 94, 55, 39\}$, $\{ 53, 80, 27\}$, $\{ 35, 69, 34\}$, $\{ 73, 23, 96\}$, $\{ 83, 90, 7\}$, $\{ 24, 82, 58\}$, $\{ 92, 60, 32\}$, $\{ 19, 37, 56\}$. 

\noindent
{\boldmath $\alpha = 196$}, $a = 3$, $b = 2$, 
$\{ 67, 9, 76\}$, $\{ 91, 69, 22\}$, $\{ 81, 43, 38\}$, $\{ 21, 44, 65\}$, $\{ 17, 45, 62\}$, $\{ 75, 24, 99\}$, $\{ 29, 56, 85\}$, $\{ 34, 54, 88\}$, $\{ 40, 46, 86\}$, $\{ 60, 92, 32\}$, $\{ 27, 73, 100\}$, $\{ 84, 95, 11\}$, $\{ 78, 42, 36\}$, $\{ 30, 79, 49\}$, $\{ 37, 57, 94\}$, $\{ 50, 1, 51\}$, $\{ 83, 28, 55\}$, $\{ 74, 16, 90\}$, $\{ 31, 33, 64\}$, $\{ 89, 96, 7\}$, $\{ 35, 13, 48\}$, $\{ 52, 14, 66\}$, $\{ 98, 39, 59\}$, $\{ 19, 58, 77\}$, $\{ 61, 41, 20\}$, $\{ 5, 82, 87\}$, $\{ 71, 97, 26\}$, $\{ 23, 47, 70\}$, $\{ 10, 53, 63\}$, $\{ 68, 93, 25\}$, $\{ 72, 80, 8\}$. 

\noindent
{\boldmath $\alpha = 202$}, $a = 3$, $b = 4$, 
$\{ 37, 62, 99\}$, $\{ 58, 63, 5\}$, $\{ 88, 13, 101\}$, $\{ 61, 95, 34\}$, $\{ 46, 19, 65\}$, $\{ 74, 76, 2\}$, $\{ 100, 49, 51\}$, $\{ 91, 39, 52\}$, $\{ 17, 85, 102\}$, $\{ 64, 16, 80\}$, $\{ 96, 29, 67\}$, $\{ 98, 26, 72\}$, $\{ 57, 82, 25\}$, $\{ 89, 66, 23\}$, $\{ 36, 68, 32\}$, $\{ 59, 86, 27\}$, $\{ 42, 1, 43\}$, $\{ 81, 11, 92\}$, $\{ 47, 97, 50\}$, $\{ 14, 56, 70\}$, $\{ 30, 54, 84\}$, $\{ 22, 38, 60\}$, $\{ 41, 94, 53\}$, $\{ 31, 40, 71\}$, $\{ 77, 10, 87\}$, $\{ 33, 45, 78\}$, $\{ 48, 21, 69\}$, $\{ 73, 93, 20\}$, $\{ 7, 83, 90\}$, $\{ 9, 35, 44\}$, $\{ 28, 75, 103\}$, $\{ 79, 55, 24\}$. 

\noindent
{\boldmath $\alpha = 208$}, $a = 3$, $b = 1$, 
$\{ 88, 31, 57\}$, $\{ 59, 81, 22\}$, $\{ 101, 23, 78\}$, $\{ 5, 94, 99\}$, $\{ 50, 19, 69\}$, $\{ 73, 97, 24\}$, $\{ 91, 44, 47\}$, $\{ 62, 34, 96\}$, $\{ 70, 77, 7\}$, $\{ 25, 83, 58\}$, $\{ 56, 65, 9\}$, $\{ 72, 8, 80\}$, $\{ 93, 40, 53\}$, $\{ 32, 66, 98\}$, $\{ 11, 92, 103\}$, $\{ 16, 106, 90\}$, $\{ 33, 74, 41\}$, $\{ 27, 95, 68\}$, $\{ 102, 42, 60\}$, $\{ 85, 20, 105\}$, $\{ 63, 46, 17\}$, $\{ 86, 51, 35\}$, $\{ 89, 76, 13\}$, $\{ 28, 82, 54\}$, $\{ 45, 100, 55\}$, $\{ 14, 38, 52\}$, $\{ 26, 61, 87\}$, $\{ 30, 49, 79\}$, $\{ 36, 48, 84\}$, $\{ 104, 67, 37\}$, $\{ 71, 75, 4\}$, $\{ 43, 21, 64\}$, $\{ 10, 29, 39\}$. 

\noindent
{\boldmath $\alpha = 214$}, $a = 3$, $b = 1$, 
$\{ 90, 36, 54\}$, $\{ 89, 94, 5\}$, $\{ 8, 51, 59\}$, $\{ 96, 49, 47\}$, $\{ 53, 76, 23\}$, $\{ 32, 68, 100\}$, $\{ 104, 34, 70\}$, $\{ 93, 56, 37\}$, $\{ 55, 44, 99\}$, $\{ 63, 29, 92\}$, $\{ 107, 64, 43\}$, $\{ 26, 75, 101\}$, $\{ 108, 21, 87\}$, $\{ 57, 46, 103\}$, $\{ 60, 24, 84\}$, $\{ 33, 48, 81\}$, $\{ 95, 82, 13\}$, $\{ 4, 73, 77\}$, $\{ 41, 42, 83\}$, $\{ 25, 10, 35\}$, $\{ 66, 52, 14\}$, $\{ 40, 62, 102\}$, $\{ 19, 91, 72\}$, $\{ 98, 27, 71\}$, $\{ 11, 74, 85\}$, $\{ 17, 22, 39\}$, $\{ 16, 45, 61\}$, $\{ 9, 79, 88\}$, $\{ 67, 38, 105\}$, $\{ 30, 80, 50\}$, $\{ 106, 86, 20\}$, $\{ 69, 97, 28\}$, $\{ 78, 109, 31\}$, $\{ 58, 65, 7\}$. 

\noindent
{\boldmath $\alpha = 220$}, $a = 3$, $b = 2$, 
$\{ 72, 13, 85\}$, $\{ 39, 61, 100\}$, $\{ 40, 73, 33\}$, $\{ 102, 36, 66\}$, $\{ 76, 17, 93\}$, $\{ 75, 16, 91\}$, $\{ 25, 63, 88\}$, $\{ 94, 108, 14\}$, $\{ 65, 107, 42\}$, $\{ 105, 35, 70\}$, $\{ 74, 21, 95\}$, $\{ 37, 59, 96\}$, $\{ 81, 101, 20\}$, $\{ 56, 110, 54\}$, $\{ 64, 90, 26\}$, $\{ 92, 97, 5\}$, $\{ 8, 47, 55\}$, $\{ 89, 23, 112\}$, $\{ 22, 38, 60\}$, $\{ 27, 82, 109\}$, $\{ 58, 104, 46\}$, $\{ 24, 28, 52\}$, $\{ 32, 111, 79\}$, $\{ 87, 106, 19\}$, $\{ 83, 34, 49\}$, $\{ 50, 53, 103\}$, $\{ 62, 51, 11\}$, $\{ 77, 7, 84\}$, $\{ 69, 29, 98\}$, $\{ 10, 57, 67\}$, $\{ 68, 99, 31\}$, $\{ 86, 41, 45\}$, $\{ 80, 71, 9\}$, $\{ 48, 78, 30\}$, $\{ 1, 43, 44\}$. 

\noindent
{\boldmath $\alpha = 226$}, $a = 3$, $b = 2$, 
$\{ 13, 27, 40\}$, $\{ 36, 57, 93\}$, $\{ 24, 53, 77\}$, $\{ 94, 102, 8\}$, $\{ 62, 22, 84\}$, $\{ 61, 30, 91\}$, $\{ 51, 56, 107\}$, $\{ 75, 41, 34\}$, $\{ 95, 10, 105\}$, $\{ 16, 92, 108\}$, $\{ 35, 114, 79\}$, $\{ 113, 74, 39\}$, $\{ 48, 55, 103\}$, $\{ 69, 76, 7\}$, $\{ 104, 59, 45\}$, $\{ 1, 28, 29\}$, $\{ 31, 89, 58\}$, $\{ 80, 20, 100\}$, $\{ 97, 14, 111\}$, $\{ 98, 25, 73\}$, $\{ 86, 32, 54\}$, $\{ 66, 17, 83\}$, $\{ 99, 47, 52\}$, $\{ 60, 110, 50\}$, $\{ 65, 23, 88\}$, $\{ 38, 43, 81\}$, $\{ 37, 64, 101\}$, $\{ 63, 5, 68\}$, $\{ 71, 44, 115\}$, $\{ 33, 49, 82\}$, $\{ 106, 21, 85\}$, $\{ 70, 112, 42\}$, $\{ 90, 109, 19\}$, $\{ 26, 46, 72\}$, $\{ 67, 11, 78\}$, $\{ 9, 87, 96\}$. 

\noindent
{\boldmath $\alpha = 232$}, $a = 3$, $b = 1$, 
$\{ 102, 54, 48\}$, $\{ 77, 42, 35\}$, $\{ 66, 16, 82\}$, $\{ 112, 45, 67\}$, $\{ 100, 37, 63\}$, $\{ 80, 41, 39\}$, $\{ 11, 107, 118\}$, $\{ 23, 49, 72\}$, $\{ 62, 22, 84\}$, $\{ 36, 59, 95\}$, $\{ 91, 83, 8\}$, $\{ 68, 13, 81\}$, $\{ 104, 47, 57\}$, $\{ 31, 86, 117\}$, $\{ 19, 74, 93\}$, $\{ 5, 98, 103\}$, $\{ 51, 65, 116\}$, $\{ 97, 14, 111\}$, $\{ 88, 58, 30\}$, $\{ 70, 90, 20\}$, $\{ 113, 87, 26\}$, $\{ 69, 52, 17\}$, $\{ 79, 27, 106\}$, $\{ 96, 53, 43\}$, $\{ 21, 25, 46\}$, $\{ 61, 24, 85\}$, $\{ 89, 10, 99\}$, $\{ 50, 28, 78\}$, $\{ 29, 44, 73\}$, $\{ 115, 40, 75\}$, $\{ 94, 56, 38\}$, $\{ 4, 110, 114\}$, $\{ 105, 34, 71\}$, $\{ 32, 92, 60\}$, $\{ 101, 108, 7\}$, $\{ 109, 76, 33\}$, $\{ 9, 55, 64\}$. 

\noindent
{\boldmath $\alpha = 238$}, $a = 3$, $b = 1$, 
$\{ 100, 47, 53\}$, $\{ 77, 34, 111\}$, $\{ 118, 39, 79\}$, $\{ 76, 30, 106\}$, $\{ 65, 23, 88\}$, $\{ 5, 69, 74\}$, $\{ 83, 22, 105\}$, $\{ 89, 121, 32\}$, $\{ 64, 33, 97\}$, $\{ 96, 38, 58\}$, $\{ 24, 61, 85\}$, $\{ 103, 10, 113\}$, $\{ 78, 8, 86\}$, $\{ 91, 56, 35\}$, $\{ 67, 27, 94\}$, $\{ 29, 110, 81\}$, $\{ 44, 57, 101\}$, $\{ 87, 71, 16\}$, $\{ 108, 82, 26\}$, $\{ 90, 40, 50\}$, $\{ 36, 48, 84\}$, $\{ 20, 60, 80\}$, $\{ 37, 109, 72\}$, $\{ 21, 31, 52\}$, $\{ 55, 114, 59\}$, $\{ 43, 116, 73\}$, $\{ 41, 92, 51\}$, $\{ 13, 107, 120\}$, $\{ 7, 68, 75\}$, $\{ 14, 49, 63\}$, $\{ 25, 70, 95\}$, $\{ 99, 45, 54\}$, $\{ 93, 102, 9\}$, $\{ 17, 11, 28\}$, $\{ 42, 62, 104\}$, $\{ 98, 117, 19\}$, $\{ 46, 112, 66\}$, $\{ 115, 4, 119\}$. 

\noindent
{\boldmath $\alpha = 244$}, $a = 3$, $b = 2$, 
$\{ 64, 41, 105\}$, $\{ 69, 10, 79\}$, $\{ 85, 113, 28\}$, $\{ 60, 102, 42\}$, $\{ 107, 31, 76\}$, $\{ 13, 74, 87\}$, $\{ 30, 65, 95\}$, $\{ 5, 17, 22\}$, $\{ 8, 110, 118\}$, $\{ 90, 83, 7\}$, $\{ 49, 50, 99\}$, $\{ 84, 46, 38\}$, $\{ 59, 35, 94\}$, $\{ 36, 97, 61\}$, $\{ 115, 72, 43\}$, $\{ 100, 20, 120\}$, $\{ 77, 106, 29\}$, $\{ 24, 47, 71\}$, $\{ 78, 52, 26\}$, $\{ 40, 68, 108\}$, $\{ 51, 53, 104\}$, $\{ 57, 66, 123\}$, $\{ 67, 44, 111\}$, $\{ 82, 109, 27\}$, $\{ 101, 117, 16\}$, $\{ 56, 88, 32\}$, $\{ 58, 103, 45\}$, $\{ 63, 86, 23\}$, $\{ 14, 34, 48\}$, $\{ 91, 25, 116\}$, $\{ 21, 75, 96\}$, $\{ 37, 92, 55\}$, $\{ 93, 19, 112\}$, $\{ 81, 114, 33\}$, $\{ 11, 62, 73\}$, $\{ 70, 54, 124\}$, $\{ 89, 9, 98\}$, $\{ 80, 119, 39\}$, $\{ 121, 122, 1\}$. 

\noindent
{\boldmath $\alpha = 250$}, $a = 3$, $b = 2$, 
$\{ 54, 59, 113\}$, $\{ 77, 104, 27\}$, $\{ 125, 60, 65\}$, $\{ 69, 78, 9\}$, $\{ 14, 61, 75\}$, $\{ 39, 96, 57\}$, $\{ 20, 95, 115\}$, $\{ 31, 91, 122\}$, $\{ 50, 64, 114\}$, $\{ 89, 97, 8\}$, $\{ 47, 56, 103\}$, $\{ 53, 105, 52\}$, $\{ 41, 126, 85\}$, $\{ 72, 30, 102\}$, $\{ 44, 86, 42\}$, $\{ 10, 73, 83\}$, $\{ 112, 36, 76\}$, $\{ 70, 40, 110\}$, $\{ 80, 51, 29\}$, $\{ 67, 34, 101\}$, $\{ 107, 17, 124\}$, $\{ 106, 74, 32\}$, $\{ 116, 1, 117\}$, $\{ 82, 127, 45\}$, $\{ 24, 108, 84\}$, $\{ 93, 98, 5\}$, $\{ 119, 71, 48\}$, $\{ 25, 37, 62\}$, $\{ 66, 79, 13\}$, $\{ 16, 19, 35\}$, $\{ 94, 26, 120\}$, $\{ 55, 68, 123\}$, $\{ 92, 7, 99\}$, $\{ 118, 28, 90\}$, $\{ 38, 49, 87\}$, $\{ 63, 46, 109\}$, $\{ 58, 23, 81\}$, $\{ 121, 88, 33\}$, $\{ 21, 22, 43\}$, $\{ 11, 100, 111\}$. 

\noindent
{\boldmath $\alpha = 256$}, $a = 3$, $b = 1$, 
$\{ 102, 54, 48\}$, $\{ 69, 22, 91\}$, $\{ 90, 10, 100\}$, $\{ 123, 107, 16\}$, $\{ 49, 76, 125\}$, $\{ 80, 84, 4\}$, $\{ 95, 37, 58\}$, $\{ 20, 79, 99\}$, $\{ 77, 27, 104\}$, $\{ 112, 62, 50\}$, $\{ 117, 110, 7\}$, $\{ 78, 59, 19\}$, $\{ 45, 60, 105\}$, $\{ 68, 35, 103\}$, $\{ 109, 42, 67\}$, $\{ 94, 25, 119\}$, $\{ 70, 87, 17\}$, $\{ 26, 5, 31\}$, $\{ 64, 32, 96\}$, $\{ 51, 75, 126\}$, $\{ 106, 40, 66\}$, $\{ 130, 56, 74\}$, $\{ 43, 108, 65\}$, $\{ 120, 63, 57\}$, $\{ 121, 88, 33\}$, $\{ 47, 39, 86\}$, $\{ 23, 116, 93\}$, $\{ 114, 13, 127\}$, $\{ 92, 128, 36\}$, $\{ 24, 98, 122\}$, $\{ 71, 85, 14\}$, $\{ 111, 73, 38\}$, $\{ 52, 113, 61\}$, $\{ 55, 101, 46\}$, $\{ 28, 44, 72\}$, $\{ 81, 115, 34\}$, $\{ 82, 53, 29\}$, $\{ 118, 129, 11\}$, $\{ 124, 83, 41\}$, $\{ 9, 21, 30\}$, $\{ 89, 97, 8\}$ 

\noindent
{\boldmath $\alpha = 262$}, $a = 3$, $b = 1$, 
$\{ 36, 58, 94\}$, $\{ 19, 103, 122\}$, $\{ 67, 121, 54\}$, $\{ 37, 91, 128\}$, $\{ 123, 59, 64\}$, $\{ 48, 71, 119\}$, $\{ 83, 90, 7\}$, $\{ 11, 32, 43\}$, $\{ 40, 61, 101\}$, $\{ 75, 124, 49\}$, $\{ 118, 13, 131\}$, $\{ 9, 80, 89\}$, $\{ 46, 108, 62\}$, $\{ 27, 102, 129\}$, $\{ 109, 57, 52\}$, $\{ 85, 110, 25\}$, $\{ 97, 47, 50\}$, $\{ 34, 10, 44\}$, $\{ 111, 29, 82\}$, $\{ 31, 53, 84\}$, $\{ 73, 60, 133\}$, $\{ 41, 76, 117\}$, $\{ 96, 45, 51\}$, $\{ 72, 105, 33\}$, $\{ 35, 79, 114\}$, $\{ 68, 107, 39\}$, $\{ 116, 30, 86\}$, $\{ 16, 65, 81\}$, $\{ 98, 17, 115\}$, $\{ 26, 126, 100\}$, $\{ 88, 24, 112\}$, $\{ 125, 5, 130\}$, $\{ 4, 70, 74\}$, $\{ 113, 93, 20\}$, $\{ 66, 104, 38\}$, $\{ 99, 28, 127\}$, $\{ 106, 14, 120\}$, $\{ 22, 56, 78\}$, $\{ 77, 132, 55\}$, $\{ 87, 8, 95\}$, $\{ 21, 42, 63\}$, $\{ 23, 69, 92\}$.
}

\end{appendices}

\end{document}